\documentclass[10pt,a4paper]{article}
\date{}
\usepackage{graphicx}
\usepackage{epstopdf}
\usepackage{epsfig}
\usepackage{fullpage} 


\usepackage[T1]{fontenc} 
\usepackage[utf8]{inputenc}

\usepackage{amsmath} 

\usepackage{amsfonts}
\usepackage{amssymb}
\usepackage{pstricks}
\usepackage{mathabx}
\usepackage{enumitem}

\newtheorem{theorem}{Theorem}[section]
\newtheorem{lemma}[theorem]{Lemma}
\newtheorem{proposition}[theorem]{Proposition}

\newtheorem{remark}[theorem]{Remark}

\newenvironment{proof}[1][Proof]{\begin{trivlist}
\item[\hskip \labelsep {\bfseries #1}]}{\end{trivlist}}

\newcommand{\modd}[1]{\vert #1\vert^2}

\newcommand{\nablag}{\nabla^{\gamma}}

\newcommand{\Vu}{\overline{V}}
\newcommand{\Vd}{\underline{V}}

\newcommand{\Vk}{\overline{V}_k}
\newcommand{\zetk}{\zeta_{k}}

\newcommand{\ds}{\displaystyle}
\newcommand{\D}{\vert D^\gamma\vert}

\newcommand{\R}{\mathbb{R}}

\newcommand{\Tb}{\mathcal{T}_b}
\newcommand{\Ud}{U^\delta}

\newcommand{\Vde}{\overline{V}^\delta}
\newcommand{\zetade}{\zeta^\delta}
\newcommand{\zetadede}{\zeta^\delta}

\newcommand{\dt}{\partial_t}
\newcommand{\hbi}{\frac{1}{h_b}}
\newcommand{\nablago}{{\nabla}^{\gamma\perp}}
\def \epsilon {\varepsilon}
\newcommand{\Bi}{\mathcal{B}}
\newcommand{\A}{\mathcal{A}}
\newcommand{\Vda}{\Vde_{(\alpha)}}
\newcommand{\zetaa}{\zeta_{(\alpha)}}
\newcommand{\zetadea}{\zetade_{(\alpha)}}
\newcommand{\qa}{q_{(\alpha)}^\delta}
\newcommand{\Wzf}{\infty}
\newcommand{\Wuf}{{W^{1,\infty}}}
\newcommand{\Wdf}{{W^{2,\infty}}}

\begin{document}

\title{The Cauchy problem on large time for a Boussinesq-Peregrine equation with large topography variations}

\author{Mésognon-Gireau Benoît\footnote{UMR 8553 CNRS, Laboratoire de Mathématiques et Applications de l’Ecole Normale Supérieure, 75005 Paris, France. Email: benoit.mesognon-gireau@ens.f}}
\maketitle

\begin{abstract}
We prove in this paper a long time existence result for a modified Boussinesq-Peregrine equation in dimension $1$, describing the motion of Water Waves in shallow water, in the case of a non flat bottom. More precisely, the dimensionless equations depend strongly on three parameters $\epsilon,\mu,\beta$ measuring the amplitude of the waves, the shallowness and the amplitude of the bathymetric variations respectively. For the Boussinesq-Peregrine model, one has small amplitude variations ($\epsilon = O(\mu)$). We first give a local existence result for the original Boussinesq Peregrine equation as derived by Boussinesq (\cite{boussinesq1},\cite{boussinesq2}) and Peregrine (\cite{peregrine}) in all dimensions. We then introduce a new model which has formally the same precision as the Boussinesq-Peregrine equation, and give a local existence result in all dimensions. We finally prove  a local existence result on a time interval of size $\frac{1}{\epsilon}$ in dimension $1$ for this new equation, without any assumption on the smallness of the bathymetry $\beta$, which is an improvement of the long time existence result for the Boussinesq systems in the case of flat bottom ($\beta=0$) by \cite{saut}.
\end{abstract}

\section{Introduction}
We recall here the context of the Water Waves problem, and introduce all the meaningful dimensionless parameters of this problem. We then present the shallow water regime and more specifically the Boussinesq-Peregrine regime. We finally introduce the different results proved in this paper, which are local existence theorems for the Boussinesq-Peregrine equations on different time scales and with different conditions.

\subsection{The Water Waves problem}

The motion, for an incompressible, inviscid and irrotationnal fluid occupying a domain $\Omega_t$ delimited below by a fixed bottom and above by a free surface is commonly referred to as the Water Waves problem. It is described by the following quantities:\begin{itemize}[label=--,itemsep=0pt]
\item the velocity of the fluid $U=(V,w)$, where $V$ and $w$ are respectively the horizontal and vertical components ;
 \item the free top surface profile $\zeta$  ;
 \item the pressure $P.$ 
\end{itemize}
All these functions depend on the time and space variables $t$ and $(X,z) \in\Omega_t$, which is the domain occupied by the water. More precisely, there exists a function $b:\mathbb{R}^d\rightarrow \mathbb{R}$ such that the domain of the fluid at the time $t$ is given by  $$\Omega_t = \lbrace (X,z)\in\mathbb{R}^{d+1},-H_0+ b(X) < z <\zeta(t,X)\rbrace,$$ where $H_0$ is the typical depth of the water. The unknowns $(U,\zeta,P)$ are governed by the Euler equations: 
\begin{align}
\begin{cases}
\partial_t U +  U\cdot\nabla_{X,z} U = -\frac{1}{\rho} \nabla P-g e_z \text{ in } \Omega_t\\
\mbox{\rm div}(U) = 0 \text{ in } \Omega_t\\
\mbox{\rm curl}(U) = 0 \text{ in } \Omega_t .
\label{c3:euler}
\end{cases}\end{align}

These equations are completed by boundary conditions: 
\begin{align}
\begin{cases}
\partial_t \zeta +\underline{V}\cdot\nabla\zeta - \underline{w} = 0  \\
U\cdot n = 0 \text{ on } \lbrace z=-H_0+ b(X)\rbrace \\
P = P_{atm}\text{ on }  \lbrace z=\zeta(X)\rbrace. \label{c3:boundary_conditions}
\end{cases}
\end{align}
In these equations, $\underline{V}$ and $\underline{w}$ are the components of the velocity evaluated at the surface. The vector $n$ in the last equation stands for the upward normal  vector at the bottom $(X,z=-H_0+b(X))$, and $e_z$ is the unit upward vector in the vertical direction. We denote $P_{atm}$ the constant pressure of the atmosphere at the surface of the fluid, $\rho$ the (constant) density of the fluid, and $g$ the acceleration of gravity. The first equation of \eqref{c3:boundary_conditions} states the assumption that the fluid particles do not cross the surface, while the second equation of \eqref{c3:boundary_conditions} states the assumption that they do not cross the bottom. The equations \eqref{c3:euler} with boundary conditions \eqref{c3:boundary_conditions} are commonly referred to as the free surface Euler equations.

\subsection{The dimensionless parameters}

Since the properties of the solutions of the Water Waves problem depend strongly on the characteristics of the flow, it is more convenient to non-dimensionalize the equations by introducing some characteristic lengths of the wave motion: \begin{enumerate}[itemsep=0pt,label=(\arabic*)]
\item The characteristic water depth $H_0$.
\item The characteristic horizontal scale $L_x$ in the longitudinal direction.
\item The characteristic horizontal scale $L_y$ in the transverse direction (when $d=2$).
\item The size of the free surface amplitude $a_{surf}$.
\item The size of bottom topography $a_{bott}$.
\end{enumerate}

Let us then introduce the dimensionless variables: $$x'=\frac{x}{L_x},\quad y'=\frac{y}{L_y},\quad \zeta'=\frac{\zeta}{a_{surf}},\quad z'=\frac{z}{H_0},\quad b'=\frac{b}{a_{bott}},$$ and the dimensionless variables: $$t'=\frac{t}{t_0},\quad P'=\frac{P}{P_0},$$ where $$t_0 = \frac{L_x}{\sqrt{gH_0 }},\quad P_0 = \rho gH_0.$$ 

After rescaling, four dimensionless parameters appear in the Euler equation. They are 
\begin{align*}
\frac{a_{surf}}{H_0} = \epsilon, \quad \frac{H_0^2}{L_x^2} = \mu,\quad \frac{a_{bott}}{H_0} = \beta,\quad \frac{L_x}{L_y} = \gamma,
\end{align*}
where  $\epsilon,\mu,\beta,\gamma$ are commonly referred to  respectively as "nonlinearity", "shallowness", "topography" and "transversality" parameters. The free surface Euler equations \eqref{c3:euler} and \eqref{c3:boundary_conditions} become after rescaling (we omit the "primes" for the sake of clarity):
\begin{align}
\begin{cases}
\partial_t U + \epsilon(V \cdot\nablag+\frac{1}{\mu} w\partial_z)U = -\frac{1}{\epsilon}\nablag P,\\
\partial_t \zeta -\sqrt{1+\epsilon^2\modd{\nablag\zeta}} U\cdot n = 0  \label{c3:dimensionless_euler}
\end{cases}
\end{align}
where we used the following notations:
\begin{align*}
\nabla^{\gamma} = {}^t(\partial_x,\gamma\partial_y) \quad \text{ if } d=2 \text{ and } \nabla^{\gamma} = \partial_x \quad \text{ if } d=1, 
\end{align*} and we recall that the unknown is the velocity $U=(V,w)$ where $V$ and $w$ are respectively the horizontal and vertical components of the velocity. The equations \eqref{c3:dimensionless_euler} with boundary conditions for the pressure and the velocity are commonly referred to as the "dimensionless free surface Euler equations".

\subsection{The Shallow Water regime}\label{c3:shallow_section}

When the shallowness parameter $\mu = \frac{H_0^2}{L_x^2}$ is small, it is possible to use a simplified equation in order to study the Water Waves problem. More precisely, at first order with respect to $\mu$, the horizontal velocity $V$ becomes columnar, which means that \begin{equation}V = \Vu + \mu R\label{c3:columnar}\end{equation} where $\Vu$ stands for the vertical average of the horizontal velocity $$\Vu(t,X) = \frac{1}{h(t,X)}\int_{1-\beta b(X)}^{\epsilon\zeta(t,X)} V(t,X,z)dz$$ and $h$ is the height of the water $h(t,X) = 1+\epsilon\zeta(t,X)-\beta b(X)$. We do not give precise estimate for the residual $R$ of \eqref{c3:columnar} in Sobolev norm here. Lagrange \cite{lagrange}, and later Saint-Venant \cite{saintvenant1} derived from the Euler equations and under the assumption that the pressure is hydrostatic\footnote{ The pressure is hydrostatic if, in dimensional form, $P(X,z) = P_{atm}-\rho g(z-\zeta)$. This is always true at the leading order in $\mu$. See for instance \cite{david} Section 5.5} the following Shallow-Water equation expressed in term of unknowns $(\Vu,\zeta)$:
\begin{align}
\begin{cases}\label{c3:shallow_water}
\partial_t\zeta+\nablag\cdot(h\Vu) = 0 \\
\partial_t V+\nablag\zeta+\epsilon(\Vu\cdot\nablag)\Vu = 0,
\end{cases}
\end{align} with initial data $(\Vu,\zeta)_{\vert t=0} = (\Vu^0,\zeta^0)$. The Shallow-Water equations \eqref{c3:shallow_water} are a typical example of quasilinear symmetrizable system (the symmetrization is done by multiplying the second equation by $h$). The local existence result for such a system is classical, and is done for example in \cite{taylor3} Chapter XVI (see also \cite{benzoniserre}). The Shallow-Water equation is said to be consistent at the first order in $\mu$ with the Water-Waves equations, which means that formally, one has $$\text{ water-waves equation = shallow-water equation} + O(\mu).$$ It formally means that if one is interested in working with a shallow water (i.e. with small values of $\mu$), one can get rid of all the terms of size $\mu$ in the Water-Waves equations and obtain the simplified model of the Shallow-Water equation. Alvarez-Sameniago and Lannes  \cite{alvarez}, and Iguchi \cite{iguchi2009} fully justified the Shallow-Water model by proving the mathematical convergence of the Water-Waves equation to the Shallow-Water equation. More precisely, one has the following result:
\begin{theorem}\label{c3:consistance_def}There exists $N\geq 1$, such that for all $(\Vu^0,\zeta^0)\in H^{N}(\R^d)^{d+1}$, there exists $T>0$ such that: \begin{enumerate}[itemsep=0pt,label=(\arabic*)] \item There exists a unique solution $(\zeta^E,U^E)$ defined on $[0;T[$ to the free surface Euler equation \eqref{c3:euler}, \eqref{c3:boundary_conditions} such that $(\zeta^E,\Vu^E)\in H^N(\R^d)^{d+1}$, and $(\zeta^E,\Vu^E)(0) = (\zeta^0,\Vu^0)$. \item There exists a unique solution $(\zeta^{SW},\Vu^{SW})\in C([0;T[;H^N(\R^d)^{d+1})$ to the Shallow-Water equation \eqref{c3:shallow_water} with initial conditions $(\zeta^0,\Vu^0)$.
\item One has, for all $t\in[0;T[$:
 $$\vert (\zeta^E,\Vu^E)(t)-(\zeta^{SW},\Vu^{SW})(t)\vert_{H^N} \leq C(\vert(\zeta^0,\Vu^0)\vert_{H^N}) \mu t.$$
\end{enumerate}

\end{theorem}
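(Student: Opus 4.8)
The plan is to assemble three pieces: (i) local well-posedness of the full water waves problem on a time interval bounded below independently of $\mu$; (ii) local well-posedness of the Shallow-Water system \eqref{c3:shallow_water} via the classical theory of Friedrichs-symmetrizable quasilinear systems; and (iii) a consistency-plus-stability argument delivering the quantitative bound in part (3). The hard part is (i), which in this setting is imported from the literature rather than reproved.

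For part (1), I would invoke the local existence theory for the free surface Euler equations \eqref{c3:euler}--\eqref{c3:boundary_conditions} in the form established by Alvarez-Samaniego and Lannes \cite{alvarez} and by Iguchi \cite{iguchi2009}: for $N$ large enough and data $(\Vu^0,\zeta^0)\in H^N(\R^d)^{d+1}$ satisfying the non-cavitation condition $h^0=1+\epsilon\zeta^0-\beta b\geq h_{\min}>0$ and the Rayleigh--Taylor sign condition (automatic near this configuration in the shallow water scaling), there exist $T>0$, which can be taken independent of $\mu\in(0,1]$, and a unique solution with $(\zeta^E,\Vu^E)\in C([0;T[;H^N(\R^d)^{d+1})$, uniformly bounded on $[0;T[$ by a constant depending only on $\vert(\zeta^0,\Vu^0)\vert_{H^N}$. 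This rests on the precise analysis of the Dirichlet--Neumann operator and its $\mu$-dependence and is the main analytic obstacle; it is also why $N$ must be taken sufficiently large. For part (2), I would rewrite \eqref{c3:shallow_water} as a first order system in $(\zeta,\Vu)$; multiplying the momentum equation by the water height $h=1+\epsilon\zeta-\beta b$ symmetrizes it, so the system is Friedrichs-symmetrizable as long as $h>0$. Since $h^0\geq h_{\min}>0$, the classical local existence theory for such systems (\cite{taylor3} Ch.~XVI, \cite{benzoniserre}) produces, after shrinking $T$ if necessary, a unique solution $(\zeta^{SW},\Vu^{SW})\in C([0;T[;H^N(\R^d)^{d+1})$ with the prescribed initial data and norm controlled by $\vert(\zeta^0,\Vu^0)\vert_{H^N}$.

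For part (3), the core input is the consistency recalled in Section~\ref{c3:shallow_section}. The mass equation $\partial_t\zeta+\nablag\cdot(h\Vu)=0$ holds \emph{exactly} for the water waves solution $(\zeta^E,\Vu^E)$ --- integrate $\mbox{\rm div}(U)=0$ over a vertical fibre and use the two kinematic boundary conditions of \eqref{c3:boundary_conditions} --- whereas the momentum equation holds only up to $O(\mu)$: with the columnar expansion \eqref{c3:columnar} and the near-hydrostatic form of the pressure one gets $\partial_t\Vu^E+\nablag\zeta^E+\epsilon(\Vu^E\cdot\nablag)\Vu^E=\mu R$, with $\Vert R(t)\Vert_{H^{N-k}}\leq C(\vert(\zeta^0,\Vu^0)\vert_{H^N})$ on $[0;T[$ for some fixed loss $k$ (or $k=0$ using the sharp estimates of \cite{alvarez,iguchi2009}); if a loss occurs I would run the remaining argument at regularity $N-k$ and take $N$ correspondingly large.

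Finally, setting $\dot\zeta=\zeta^E-\zeta^{SW}$ and $\dot V=\Vu^E-\Vu^{SW}$ and subtracting the two systems, $(\dot\zeta,\dot V)$ solves a linear system whose coefficients are built from $(\zeta^E,\Vu^E)$ and $(\zeta^{SW},\Vu^{SW})$ --- both bounded in $H^N\hookrightarrow W^{1,\infty}$ on $[0;T[$ --- with source $\mu R$ and vanishing data at $t=0$. Using the same symmetrizer and standard product and commutator estimates, the $H^{N-k}$ energy $\mathcal E(t)$ of $(\dot\zeta,\dot V)$ obeys $\frac{d}{dt}\mathcal E^2\leq C\mathcal E^2+C\mu\mathcal E$, hence $\frac{d}{dt}\mathcal E\leq C\mathcal E+C\mu$, and Grönwall with $\mathcal E(0)=0$ gives $\mathcal E(t)\leq C\mu\int_0^t e^{C(t-s)}\,ds\leq C(T)\,\mu\, t$ on $[0;T[$, which is the claimed estimate. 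The only genuine bookkeeping is tracking the derivative loss between the consistency step and the energy estimate, which is absorbed by choosing $N$ large enough at the outset.
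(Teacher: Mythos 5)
This theorem is not proved in the paper at all: it is recalled as a known result and attributed to Alvarez-Samaniego--Lannes \cite{alvarez} and Iguchi \cite{iguchi2009}, so there is no ``paper proof'' to compare against line by line. Your outline is precisely the standard justification scheme that those references implement --- (i) well-posedness of the full water waves problem on a time interval uniform in $\mu$, which is indeed the hard analytic input and is legitimately imported rather than reproved, (ii) classical local existence for the Friedrichs-symmetrizable Shallow-Water system, and (iii) consistency of the water waves solution with \eqref{c3:shallow_water} up to an $O(\mu)$ residual (exact averaged mass equation, momentum equation up to $\mu R$), followed by a stability/energy estimate for the difference and Gr\"onwall with zero initial error --- so your approach is the ``same'' one in the only meaningful sense available here. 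One caveat worth flagging: your argument, as you acknowledge, delivers the error bound in $H^{N-k}$ (or requires data in $H^{N+k}$) because of the derivative loss in the consistency step and in the energy estimate for the difference system, whereas the statement as written measures the error in the same $H^N$ norm as the data. This is really an imprecision of the recalled statement rather than a flaw in your argument --- the ``there exists $N\geq 1$'' phrasing is meant to absorb exactly this kind of loss, and the sharp versions in \cite{alvarez,iguchi2009} are stated with the appropriate gap between the regularity of the data and the norm of the error --- but if you wanted the bound literally in $H^N$ for $H^N$ data you would need to say explicitly that you relabel ($N-k\to N$) or use the tame estimates of the cited works to avoid the loss. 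Otherwise the structure of your proof is correct and complete at the level of a proof sketch.
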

This theorem implies that the error made by using the solutions of the simplified model of Shallow-Water instead of the solutions of the free-surface Euler equations is "of size $\mu$".\par\vspace{\baselineskip}

At the second order with respect to $\mu$, one can derive several models for the Water-Waves problem which are formally more precise than the Shallow-Water equation. We consider in this paper the case where $$\epsilon = O(\mu),$$ which corresponds to a small amplitude model in the Shallow-Water regime. Boussinesq (\cite{boussinesq1},\cite{boussinesq2}) derived the following model for flat bottoms, and later Peregrine \cite{peregrine} for nonflat bottoms:
\begin{align}
\begin{cases}
\partial_t\zeta+\nablag\cdot(h\Vu)=0 \\
[1+\mu\Tb]\partial_t\Vu+\epsilon(\Vu\cdot\nablag)\Vu+\nablag\zeta = 0 \label{c3:boussinesq_equation1}
\end{cases}
\end{align}
where $\Tb$ is the following operator : 
$$\forall\Vu,\qquad \Tb\Vu = -\frac{1}{3h_b}\nablag(h_b^3\nablag\cdot\Vu)+\frac{\beta}{2h_b}[\nablag(h_b^2\nablag b\cdot\Vu)-h_b^2\nablag b\nablag\cdot\Vu]+\beta^2\nablag b\nablag b\cdot\Vu,$$ with the notation  $$h_b = 1-\beta b.$$ See also \cite{david} for a complete proof of the formal derivation of this equation. This equation is known as the Boussinesq-Peregrine equation, and has been used a lot in applications to coastal flows. In the case of a flat bottom, one has $\beta = 0$ and $\Tb = -\frac{1}{3}\nablag\nablag\cdot$. The equation \eqref{c3:boussinesq_equation1} can then be seen as a particular case of a Boussinesq system (see \cite{bonachen} for the $1d$ case, and  \cite{bonalannes} for the $2d$ case). \par\vspace{\baselineskip}

The Boussinesq-Peregrine model is a good compromise for numerical simulation between the precision of the approximation of the Water-Waves problem, and the simplicity of the equations. Indeed, the Boussinesq-Peregrine equation preserves the dispersive nature of the Water-Waves equation. To understand this statement, one can look for plane wave solutions $(\zeta,\Vu) = (\zeta^0,\Vu^0)e^{i(k\cdot X-\omega(k) t)}$ of the linearized Boussinesq-Peregrine equation \eqref{c3:boussinesq_equation1} and finds solutions with a dispersive relation: $$\omega(k)^2 = \frac{\vert k^\gamma\vert^2}{1+\frac{\mu}{3}\vert k^\gamma\vert^2},$$ with $k^\gamma = {}^t(k_1,\gamma k_2)$. Thus the group velocity $c = \frac{\omega(k)}{\vert k^\gamma\vert}$ of the water waves depends on the frequency, which is a definition of dispersion. The Shallow-Water equation, however, is not a dispersive equation, at least in dimension $d=1$, since one would find a group velocity of the water waves equals to $1$. One could also derive an even more precise model at the $O(\mu^2)$ order than the Boussinesq-Peregrine model, without any assumption on the smallness of $\epsilon$, which is called the Green-Naghdi equation (see \cite{serre} for the case $d=1$, \cite{sugardner}, \cite{seabra} for the case of non-flat bottom and also under the name of "fully nonlinear Boussinesq equations" in \cite{wei}; see also \cite{miles}). It has a very similar form as one of the Boussinesq-Peregrine equation, but with $h_b$ replaced by $h$ in the definition of the operator $\Tb$. Therefore, though the Green-Naghdi model should be more precise than the Boussinesq-Peregrine equation, its numerical implementation leads to the computation at each time step of the inverse of $I+\mu\Tb$ (which now depends on $h$, and therefore on the time), which increases the computational complexity\footnote{Note however that a variant of the Green-Naghdi equation has where the operator to invert in time is time independent has been recently derived recently in \cite{lannesmarche}}. 

\subsection{Long time existence for the Water-Waves models}
We are interested in the dependence of the existence time for the solutions of systems like \eqref{c3:shallow_water} and \eqref{c3:boussinesq_equation1} with respect to the parameter $\epsilon$. For such quasilinear equations with an $\epsilon$ factor on the nonlinearity, the "good" time existence should be of size $\frac{1}{\epsilon}$. Let us explain this statement by studying the one dimensional Burgers equation:
\begin{align*}
\begin{cases}
\partial_t u+\epsilon u\partial_x u = 0\\
u(0,x) = u_0(x)
\end{cases}
\end{align*}
where we assume some reasonable regularity on $u_0$. Using the method of characteristics to solve it, we find that characteristics are of the form $$x(t) = \epsilon u_0(x_1)t+x_1$$ with $x_1\in\R$. The solutions do not exist globally in the general case because the characteristics intersect themselves. Let us estimate the time at which they intersect. Let us consider two characteristics $x(t) = \epsilon u_0(x_1)t+x_1$ and $\epsilon u_0(x_2)t+x_2$. They intersect at time $$t=-\frac{x_2-x_1}{\epsilon(u_0(x_2)-u_0(x_1))}.$$ Therefore, the existence time for the solutions is $$T = \underset{x_1,x_2\in\R}{\min} - \frac{x_2-x_1}{\epsilon(u_0(x_2)-u_0(x_1))} = - \frac{1}{\epsilon\underset{x\in\R}{\min} u_0'(x)}  $$ and therefore is of size $\frac{1}{\epsilon}$.\par\vspace{\baselineskip}

The standard theory for quasilinear symmetrizable systems gives the local existence of solutions $(\Vu,\zeta)$ of the Shallow-Water equations \eqref{c3:shallow_water} on the space $C([0;T];H^N(\R^d)^{d+1})$, with $N$ large enough, and gives an explosion criterion: one has $T<\infty$ if and only if $\underset{t\rightarrow T}{\lim} \vert (\Vu,\zeta)(t)\vert_{W^{1,\infty}} =+\infty$. If one could prove an estimate of the form \begin{equation}\vert (\Vu,\zeta)(t)\vert_{H^N} \leq g(\vert (\Vu,\zeta)\vert_{H^N}) t\epsilon\label{c3:estimate_attendue} \end{equation} for a smooth non decreasing function $g$, one would get by a continuity argument that the solutions exist on an interval of size $\frac{1}{\epsilon}$. An $L^2$ estimate of the form \eqref{c3:estimate_attendue} is easy to obtain, because multiplying the second equation of \eqref{c3:shallow_water} symmetrizes both first space derivatives order terms and quantities of size $\epsilon$. Indeed, one can differentiate with respect to time the "energy" $$E(\zeta,\Vu) = \frac{1}{2}\vert\zeta\vert_2^2+\frac{1}{2}( h\Vu,\Vu)_2$$ to get $$\frac{dE}{dt} = (\partial_t\zeta,\zeta)_2+(\partial_t\Vu,h\Vu)_2+\frac{1}{2}((\partial_t h)\Vu,\Vu)2$$ and replace $\partial_t(\zeta,\Vu)$ by their expressions given in the equation \eqref{c3:shallow_water}. The terms of order $1$ are transparent in the energy estimate because they cancel one another, thanks to the "symmetry" of the system, and one gets $$\frac{dE}{dt}=0$$ which is even better than needed. However, if one differentiates the equation \eqref{c3:shallow_water} with respect to space variable, one finds a system of the form ($\partial$ denotes here any space derivative of order one):
\begin{align*}
\begin{cases}\partial_t \partial\Vu + \nablag\cdot((\partial h) \Vu)+\nablag\cdot(h\partial(\Vu)) =0 \\
\partial_t \partial\zeta + \epsilon(\partial\Vu)\cdot\nablag\Vu + \epsilon \Vu\cdot\nablag \partial \Vu +\nablag\partial \zeta =0.
\end{cases}
\end{align*}
It is not possible anymore to make this system symmetric in order to cancel the terms that are not of size $\epsilon$ in the energy estimates.  Indeed, the term $\partial h$ is not of size $\epsilon$, since $h=1+\epsilon\zeta-\beta b$, and thus also depends on $\beta$. The $H^N$ norms of the unknowns are then not easily controlled by terms of size $\epsilon$, which prevent us from proving directly an estimate of the form \eqref{c3:estimate_attendue}. The long existence for this system is therefore tied to a singular perturbation problem with variable coefficients. \par\vspace{\baselineskip}

Long time existence results for  similar types of equations have been proved for example by Schochet in \cite{schochet} for the compressible Euler equation in a bounded domain with well-prepared data, or by Schochet-Métivier in \cite{schochetmetivier} for the Non-isentropic Euler equation with general data. Alazard  (\cite{alazard2}) proved a long time existence result for the non-isentropic compressible Euler equation, in the case of bounded and unbounded domains. Bresch-Métivier (\cite{bresch_metivier}) proved that for $N>d/2+1$ and $(\zeta^0,\Vu^0)\in H^N(\R^d)^{d+1}$, there exists a unique solution $(\zeta,\Vu) \in C([0;\frac{T}{\epsilon}];H^N(\R^d)^{d+1})$ to the equation \eqref{c3:shallow_water} with initial data $(\zeta^0,\Vu^0)$ where $T$ only depends on the norm of the initial data, even if $\beta$ is not assumed to be small. \par\vspace{\baselineskip}

In the case of a flat bottom, as explained in Section \ref{c3:shallow_section}, the Boussinesq-Peregrine equation can be seen as a particular case of the Boussinesq systems. Saut and Li (\cite{saut}) proved the local existence on a large time interval of size $\frac{1}{\epsilon} $ for most of these systems, when the bottom is flat. There is, in our knowledge, no local existence results in the literature in the case of non flat bottoms. A local existence result for the Boussinesq-Peregrine equation \eqref{c3:boussinesq_equation1} on a time $\frac{T}{\epsilon}$ with $T$ independent on $\mu$ would be important to fully justify this model, and get a convergence result similar to Theorem \ref{c3:consistance_def}.

\subsection{Main result}\label{c3:intro_explication}In \cite{benoit}, a large time existence result is proved for the Water-Waves equations in presence of large topography, extending the result of \cite{alvarez} which holds for small topography variations\footnote{However this result needs the presence of a small surface tension in the model} ($\beta=O(\epsilon))$). Coupling this result with the one of \cite{bresch_metivier}, one can prove that the Shallow-Water equations are consistent at order $1$ in $\mu$ with the full Water-Waves equations, on a time interval of size $\frac{1}{\epsilon}$, even in the case of large topography variations ($\beta = O(1)$). A similar result between the Water-Waves equations and the Boussinesq-Peregrine equation \eqref{c3:boussinesq_equation1} would be a new step in the mathematical justification of the Water-Waves models. \par\vspace{\baselineskip} We first prove in this paper a well posedness result for the equation \eqref{c3:boussinesq_equation1} on a time $O(1)$ (Theorem \ref{c3:boussi_theorem} below). Though not optimal as we shall see, such a local well posedness result did not seem to be available in the literature for non flat bottoms. The proof consists in a particular adaptation of the proof of local existence for symmetrizable quasilinear systems. The difficulty is that this system is not easily symmetrizable. In the case of the Shallow-Water equations \eqref{c3:shallow_water}, as explained in Section \ref{c3:shallow_section}, one should multiply the second equation by $h$ to get a "symmetric" system. However, in the case of the Boussinesq-Peregrine equation \eqref{c3:boussinesq_equation}, multiplying the second equation by $h$ indeed symmetrizes the system with respect to order one space derivatives. But the operator $h(I+\mu\Tb)$ is not symmetric. Therefore, $(h(I+\mu\Tb)\partial_t\Vu,\Vu)_2$ is not equal to $\frac{d}{dt} (h(I+\mu\Tb)\Vu,\Vu)_2$. It implies the presence of some commutators terms between $h\Tb$ and $\partial_t$ in the energy estimates which are difficult to control. However, the operator $h_b(I+\mu\Tb)$ is symmetric and we have the equivalence $$(h_b(I+\mu\Tb)\Vu,\Vu)_2 \sim \vert\Vu\vert_2^2+\mu\vert\nablag\cdot\Vu\vert_2^2.$$ Therefore, a "good" energy for the Boussinesq-Peregrine equation seems to be \begin{equation}E(\zeta,\Vu) = \frac{1}{2}\vert\zeta\vert_2^2+\frac{1}{2}(h_b(I+\mu\Tb)\Vu,\Vu)_2.\label{c3:energie_boussinesq}\end{equation} But multiplying the second equation by $h_b$ does not properly symmetrize the system with respect to space derivatives. More precisely, it symmetrizes them up to an $\epsilon\zeta\nablag \cdot \Vu$ factor. In the Boussinesq-Peregrine regime, one has $\epsilon = O(\mu)$ and therefore this term is actually controlled by the energy... It yields to a local existence result for a time interval of size $O(1)$ and not $O(\frac{1}{\epsilon})$. This is the purpose of Theorem \ref{c3:boussi_theorem}. \par\vspace{\baselineskip}

One then looks for an improved time of existence for the Boussinesq-Peregrine equation. In order to do so, one could try to use an adaptation of the proof of the long time existence result by Bresch-Métivier \cite{bresch_metivier}.  The idea of this proof is to have energy estimates of the form $$E(t)\leq C(E)t\epsilon+C_0$$ for some constant $C(E)$ which depends on the energy, and $C_0$ which only depends on initial data, where $E$ is an energy. One can then conclude by a continuity argument that the energy stays bounded on an interval of size $\frac{1}{\epsilon}$. Let us detail this idea on a simplified model of equation of the form:
\begin{equation}
\partial_t u+\epsilon (u\partial_x) u + L(\epsilon u,a(x))u = 0
\label{c3:equation_model}
\end{equation}
where $L(\epsilon u,a(x)) : \R^d \longrightarrow \R^d$ is a linear, elliptic, antisymmetric operator of order $1$. One computes: \begin{align*}\frac{d}{dt}\frac{1}{2}\vert u(t)\vert_2^2&= (\partial_t u,u)_2\\
&= (Lu,u)_2+\epsilon((u\partial_x) u,u)_2\end{align*}
using the equation to replace $\partial_t u$ by its expression. Since the operator $L + \partial_x$ is antisymmetric, $\frac{d}{dt}\frac{1}{2}\vert u(t)\vert_2^2$ vanishes, which is even better than being of size $\epsilon$. However, it does not stand true for higher order estimates. Looking for an estimate on $\frac{d}{dt}\vert u(t)\vert_{H^N}$ for some $N>0$,  one differentiates the equation \eqref{c3:equation_model} and finds a system of the form:
$$\partial_t \partial_x u +\epsilon (u \partial_x)\partial_x u +\epsilon [u\partial_x,\partial_x] u + L(\epsilon u, a(x)) \partial_x u + \epsilon (\partial_x u) dL_1(\epsilon u,a(x))u + (\partial_x a)dL_2(\epsilon u,a(x))u=0,$$
where we denoted $dL_i(\epsilon u,a(x))$ the differential of $L$ with respect to the $i-th$ variable, at the point $(\epsilon u,a(x))$, for $i=1,2$. Due to an extra term $(\partial_x a)dL_2(\epsilon u,a(x))u$, the energy estimates involve terms which are not of size $\epsilon$. This problem does not appear for time derivatives: if one differentiates the equation $\eqref{c3:equation_model}$ with respect to time, one finds:
$$\partial_t (\partial_t u)+\epsilon u\partial_x (\partial_t u) +\epsilon[u\partial_x,\partial_t] u+L(\epsilon u, a(x))\partial_t u+\epsilon(\partial_t u )dL_1(\epsilon u,a(x))u.$$ Therefore, one can find an energy estimate of the form $$\vert (\partial_t^k u)(t)\vert_2 \leq \vert (\partial_t^k u)(0)\vert_2  + \epsilon tC(\vert(\partial_t^k u)(t)\vert_2),$$ for all $k\leq N$. In order to find a similar energy estimate in $H^N$, one uses the equation, which gives an expression of space derivatives with respect to time derivatives:
$$L(\epsilon u,a(x))  u = -\partial_t  u  -\epsilon  u\partial_x u$$ and using the previous estimate for time derivatives, and the ellipticity of $L$, one gets:
$$\vert u(t)\vert_{H^1} \leq C(\vert u(t)\vert_{H^1})\epsilon t +C_0$$ for some constant $C_0$ and a non decreasing smooth function $C$. One can do the same for higher order Sobolev estimates, by considering higher order time derivatives $\partial_t^k$ and using a finite induction on $k$. By a continuity argument, an estimate of the form $$\vert u(t)\vert_{H^N} \leq C(\vert u(t)\vert_{H^1})\epsilon t+C_0$$ implies that the $H^N$ norm of $u$ stays bounded on an interval of size $\frac{1}{\epsilon}$. \par\vspace{\baselineskip}

This technique only works if time and space derivatives have the same "order". More precisely, for the Shallow-Water equation \eqref{c3:shallow_water}, the time derivatives are equal to sum of terms involving one space derivative. This is not the case for the Boussinesq-Peregrine equation \eqref{c3:boussinesq_equation1}. Indeed, in the second equation, $(I+\mu\Tb)\partial_t\Vu$ is equal to one space derivative order terms, while $(I+\mu\Tb)$ is of order two. It leads to issues if one tries to use the equation to control space derivatives by time derivatives and tries to recover an estimate of size $\epsilon$ for the space derivatives. For example, the second equation of \eqref{c3:boussinesq_equation1} provides $$\nablag\zeta = -(I+\mu\Tb)\partial_t \Vu+\epsilon\times\text{ other terms },$$ and $\Tb$ is an order two operator with respect to space. It is therefore not clear that $\Tb \partial_t\Vu$ is controlled by the energy \eqref{c3:energie_boussinesq}. \par \vspace{\baselineskip}

To overcome this problem, we introduce a modified equation, which is consistent with the Boussinesq-Peregrine equation \eqref{c3:boussinesq_equation1} at the $O(\mu^2)$ order (and therefore with the Water-Waves equations). Such equation would have a proper structure adapted to the use of the method used by Bresch-Métivier in \cite{bresch_metivier}. The approach of modifying the equation without changing the consistency, in order to improve the structure of the equation has been used for example by Israwi in \cite{israwi} for the Green-Naghdi equation, or by Saut and Xu (\cite{sautxu}) for a  model of full dispersion. In the Boussinesq-Peregrine case, a short study (see later Section \ref{c3:section04} for more details) leads us to introduce the following modified equation: 
\begin{equation}
\left\{
\begin{aligned}
&\dt\zeta+\nablag\cdot(h\Vu)=0\\
&(I+\mu(\Tb-\nablag(\hbi\nablag\cdot(h_b\cdot))-\hbi\nablago\nablago\cdot))\dt\Vu+\epsilon\Vu\cdot\nablag\Vu+(I-\mu\nablag\hbi\nablag\cdot(h_b\cdot))\nablag\zeta=0
\end{aligned}\right. \label{c3:boussinesq_modified_faux1}
\end{equation}

The main result of this paper is the following (see later Theorem \ref{c3:large_time_theorem} for a precise statement):

\begin{theorem}
\begin{enumerate}[label = ( \arabic*)]
\item The equation $ \eqref{c3:boussinesq_modified_faux1} $ is locally well-posed on a time interval $[0;T]$ where $T$ only depends on the initial data (and not on $\mu$), in dimension $d=1,2$.
\item In dimension $1$, the equations \eqref{c3:boussinesq_modified_faux1} admit a unique solution on a time interval of the form $[0;\frac{T}{\epsilon}]$ where $T$ only depends on the initial data.
\end{enumerate}
\end{theorem}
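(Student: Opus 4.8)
The plan is to treat both parts with a single, carefully weighted energy. Multiplying the second equation of \eqref{c3:boussinesq_modified_faux1} by $h_b$, one checks that the operator $h_b\bigl(I+\mu(\Tb-\nablag(\hbi\nablag\cdot(h_b\cdot))-\hbi\nablago\nablago\cdot)\bigr)$, henceforth written $h_b(I+\mu\Tb')$, is symmetric and, as soon as $\inf h_b>0$ (the standing non-cavitation assumption on the bottom — not a smallness assumption on $\beta$), satisfies
\[
\bigl(h_b(I+\mu\Tb')\Vu,\Vu\bigr)_2\ \simeq\ \vert\Vu\vert_2^2+\mu\vert\nablag\Vu\vert_2^2
\]
uniformly in $\mu$; the extra $\hbi\nablago\nablago\cdot$ term is included precisely so that this controls the full gradient $\nablag\Vu$ (not only $\nablag\cdot\Vu$), via the Hodge identity when $d=2$. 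Since the linearization already forces a $\mu$-weight on the surface unknown as well, I would work, for $N$ large, with
\[
\E^N(\zeta,\Vu)=\frac12\sum_{\vert\alpha\vert\le N}\Bigl[(\partial^\alpha\zeta,\mathcal P\partial^\alpha\zeta)_2+(h_b(I+\mu\Tb')\partial^\alpha\Vu,\partial^\alpha\Vu)_2\Bigr],
\]
where $\mathcal P$ is a symmetric operator with $\mathcal P\ge I$, principal part $-\mu\,\hbi\nablag\cdot(h_b\nablag\cdot)$, singled out by the cancellation requirement below; one verifies $\E^N\simeq\vert\zeta\vert_{H^N}^2+\mu\vert\nablag\zeta\vert_{H^N}^2+\vert\Vu\vert_{H^N}^2+\mu\vert\nablag\Vu\vert_{H^N}^2$ with constants independent of $\mu$ and $\beta$.

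\textbf{Part (1).}
I would run the classical scheme for quasilinear systems: regularise \eqref{c3:boussinesq_modified_faux1} with a Friedrichs mollifier $J_\delta$, solve the resulting ODE on a ball of $H^N(\R^d)^{d+1}$ by Cauchy--Lipschitz, obtaining $(\zeta^\delta,\Vu^\delta)$ on $[0,T_\delta]$, and prove a uniform (in $\delta$ and $\mu$) a priori bound $\tfrac{d}{dt}\E^N\le C(\E^N)$. The estimate is obtained by applying $\partial^\alpha$ to both equations and pairing the first with $\mathcal P\partial^\alpha\zeta$, the second with $h_b\partial^\alpha\Vu$: since $\mathcal P$ and $h_b(I+\mu\Tb')$ are symmetric and $b,h_b$ are time-independent, the time-derivative terms become $\tfrac{d}{dt}\E^N$; the choice of $\mathcal P$ makes the two top-order cross terms $(\nablag\cdot(h_b\partial^\alpha\Vu),\mathcal P\partial^\alpha\zeta)_2$ and $(\mathcal Q\nablag\partial^\alpha\zeta,h_b\partial^\alpha\Vu)_2$ cancel after one integration by parts — which is exactly where the companion operator $\mathcal Q=I-\mu\nablag\hbi\nablag\cdot(h_b\cdot)$ in \eqref{c3:boussinesq_modified_faux1} intervenes; and what remains is either $O(\epsilon)$ (the genuinely nonlinear terms and the $\epsilon\nablag\cdot(\zeta\Vu)$ contribution from $h=h_b+\epsilon\zeta$) or a commutator of $\partial^\alpha$ with $\mathcal P$, $\Tb'$, $\mathcal Q$, $h$ or $h_b$ — all controlled by $C(\E^N)$ through product, commutator and composition estimates and the uniform coercivity. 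Hence $T_\delta\ge T=T(\E^N(0))$ independently of $\delta$ and $\mu$; a compactness argument in a weaker norm (Aubin--Lions) and an $L^2$-type estimate on the difference of two solutions then give existence and uniqueness in $C([0,T];H^N(\R^d)^{d+1})$, with $T$ independent of $\mu$, in $d=1,2$.

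\textbf{Part (2).}
Here I would follow the Bresch--Métivier scheme sketched in Section~\ref{c3:intro_explication}. It applies because \eqref{c3:boussinesq_modified_faux1} was designed so that, in dimension $1$ (where $\Vu$ is scalar and the $\hbi\nablago\nablago\cdot$ term drops), inverting $I+\mu\Tb'$ gives
\[
\dt\Vu=-(I+\mu\Tb')^{-1}\bigl(\mathcal Q\nablag\zeta+\epsilon\,\Vu\cdot\nablag\Vu\bigr),
\]
with $(I+\mu\Tb')^{-1}\mathcal Q\nablag$ a genuine first-order operator — bounded uniformly in $\mu$ on the $H^s$ scale and elliptic of order one away from low frequencies — because the order-three term $-\mu\,\partial_x(\hbi\partial_x(h_b\partial_x\zeta))$ hidden in $\mathcal Q\nablag\zeta$ is exactly compensated by the $\mu^{-1}$ gain of $(I+\mu\Tb')^{-1}$ at high frequency (and here the precise coefficients of the modification are what make this work). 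Thus time and space derivatives now have the same order, contrary to \eqref{c3:boussinesq_equation1}. Then: (i) using part (1), differentiate \eqref{c3:boussinesq_modified_faux1} $k\le N$ times in $t$; since $b,h_b$ do not depend on $t$ and $\partial_t h=\epsilon\partial_t\zeta$, no obstructive term of type $(\partial_xb)\,dL$ is produced, only $O(\epsilon)$ terms and $[\,\cdot\,,\dt]$-commutators, so the energy estimate yields $\vert\partial_t^k(\zeta,\Vu)(t)\vert_{\mu,N-k}\le\vert\partial_t^k(\zeta,\Vu)(0)\vert_{\mu,N-k}+\epsilon\,t\,C(\cdots)$ in the natural $\mu$-weighted norms; (ii) using the two equations together with the uniform-in-$\mu$ invertibility of $\mathcal P$, $\mathcal Q$, $I+\mu\Tb'$ and the uniform ellipticity of $(I+\mu\Tb')^{-1}\mathcal Q\nablag$, one expresses each spatial derivative of $(\zeta,\Vu)$ through time derivatives modulo $O(\epsilon)$ (and lower-order terms absorbed by induction); (iii) a finite induction on $k$ from $0$ to $N$ then yields $\vert(\zeta,\Vu)(t)\vert_{H^N}\le C_0+\epsilon\,t\,C(\vert(\zeta,\Vu)(t)\vert_{H^N})$, where $C_0$ depends only on a slightly higher Sobolev norm of the data (through the expressions of $\partial_t^k(\zeta,\Vu)(0)$ read off from the equations), and a continuity argument extends the solution to $[0,T/\epsilon]$.

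\textbf{Main obstacle.}
In part (1) the delicate point is algebraic: pinning down the companion operator $\mathcal P$ so that $\E^N$ is genuinely coercive uniformly in $\mu$ and the non-$\epsilon$, non-commutator top-order $\zeta$--$\Vu$ interaction cancels — the modified operators in \eqref{c3:boussinesq_modified_faux1} and $\mathcal P$ must fit together exactly. In part (2) the crux is step (ii): proving that using the equation to trade spatial derivatives for time derivatives loses nothing in $\mu$. This is the whole reason for replacing \eqref{c3:boussinesq_equation1} by \eqref{c3:boussinesq_modified_faux1} — for the original equation $\Tb\dt\Vu$ is of order two while the right-hand side is of order one, so the trade fails — and it is also why the $\tfrac1\epsilon$ result is limited to $d=1$: in $d=2$ the vectorial structure (the gradient/curl splitting of $\Vu$ and the $\hbi\nablago\nablago\cdot$ piece of $\Tb'$, whose commutators with spatial derivatives reintroduce terms in $\nabla b$ that are not $O(\epsilon)$) prevents the spatial derivatives of $(\zeta,\Vu)$ from being recovered cleanly from the time derivatives.
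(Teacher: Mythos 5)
Your overall architecture does match the paper's: a mollification/energy scheme for the $O(1)$ well-posedness, and for the $\frac{1}{\epsilon}$ result in $d=1$ the Bresch--M\'etivier scheme (symmetry of the singular terms is preserved under time differentiation, then space derivatives are recovered from time derivatives through the equation, using the invertibility of $h_b\Bi$ and $h_b\A$), together with a roughly correct identification of the $d=2$ obstruction (the paper's precise statement is that $\nablago\cdot\Tb$ is no longer symmetric, so $(\nablago\cdot\mu\Tb\dt\Vu,\nablago\cdot\Vu)_2$ is not the time derivative of a positive quantity). However, there is a genuine gap, common to both parts: you work directly with the unknowns $(\zeta,\Vu)$ and assert that the residual coming from $h=h_b+\epsilon\zeta$ in the first equation, namely $\epsilon\nablag\cdot(\zeta\Vu)$, is an innocuous $O(\epsilon)$ contribution. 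It is not, once your symmetrizer $\mathcal P$ carries the $\mu$-order-two part that the cancellation with $(I-\mu\nablag\hbi\nablag\cdot(h_b\cdot))\nablag\zeta$ forces upon it. After applying $\partial^\alpha$ with $\vert\alpha\vert=N$ and pairing with $\mathcal P\partial^\alpha\zeta$, this residual produces at top order terms of the type $\epsilon\mu\,(\zeta\,\nablag\cdot\partial^\alpha\Vu,\ \hbi\nablag\cdot(h_b\nablag\partial^\alpha\zeta))_2$: no integration by parts avoids leaving $N+2$ derivatives on one of the two factors, while your energy only controls $\sqrt{\mu}$ times $N+1$ derivatives of each unknown, and $\epsilon\leq C\mu$ does not help since even $\mu^{3/2}$ times $N+2$ derivatives is not controlled. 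Hence the claimed bound $\frac{d}{dt}\E^N\leq C(\E^N)$ does not follow as written, and in Part (2) the same term also ruins the exact symmetry of the singular terms in the time-differentiated system.

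The paper resolves precisely this obstruction with the Bresch--M\'etivier change of unknown $q=\frac{1}{\epsilon}\log\bigl(1+\frac{\epsilon\zeta}{h_b}\bigr)$ (Proposition \ref{c3:qzeta}): in the $(q,\Vu)$ formulation the first equation becomes $\dt q+\epsilon\Vu\cdot\nablag q+\hbi\nablag\cdot(h_b\Vu)=0$, so only the time-independent $h_b$ sits under the divergence and under the $\mu$-part of the symmetrizer, the whole $\epsilon\zeta$-dependence being relegated to a transport term that is handled by the usual symmetry trick (at the price of the technical bookkeeping between $q$ and $\zeta$ in Proposition \ref{c3:qzeta}). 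The paper states explicitly that having $\nablag\cdot(h_b\partial^\alpha\Vu)$ instead of $\nablag\cdot(h\partial^\alpha\Vu)$ in the differentiated system is essential because the term $\nablag\cdot(\epsilon\partial^\alpha\zeta\,\Vu)$ would not be controlled by the energy. Your proposal needs this device (or an equivalent modified energy absorbing the $\epsilon\zeta$ coefficient) to close; with it, the remaining ingredients you list -- coercivity of $h_b\Bi$, the inversion estimates for $h_b\A$ used to recover $\nablag\zeta_k$ from $\Vu_{k+1}$, the finite induction on time derivatives and the continuity argument -- do line up with the paper's proof.
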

\begin{remark}
For technical reasons which are discussed further below, the Theorem we prove is only true in dimension $1$. However, we precisely explain in this paper the difficulties raising for a proof in dimension $2$.
\end{remark}
Though this Theorem is proved by adapting the technique used by Bresch-Métivier in \cite{bresch_metivier}, its adaptation to the case of a dispersive equation has not been done yet in the literature to our knowledge. As one shall see later in Section \ref{c3:section03}, this result is tied to a singular perturbation problem.
The plan of the article is the following:
\begin{itemize}[label=--,itemsep=0pt] \item In Section \ref{c3:section01}, we prove a local existence result for the Boussinesq-Peregrine equation in dimension $d=1,2$,
\item In Section \ref{c3:section04} we introduce a modified Boussinesq-Peregrine equation and in Section \ref{c3:section02} we prove its local well-posedness in dimension $d=1,2$,
\item In Section \ref{c3:section03}, we prove the long time existence result for the modified Boussinesq-Peregrine equation, in dimension $d=1$.
\end{itemize}

\subsection{Notations}\label{c3:notations}
We introduce here all the notations used in this paper.
 \subsubsection{Operators and quantities} Because of the use of dimensionless variables (see before the "dimensionless equations" paragraph), we use the following twisted partial operators: 
\begin{align*}
&\nabla^{\gamma} = {}^t(\partial_x,\gamma\partial_y) \quad &\text{ if } d=2\quad &\text{ and } &\nabla^{\gamma} = \partial_x &\quad \text{ if } d=1, \\
&\nablago = {}^t(-\gamma\partial_y,\partial_x) &\text{ if } d=2\quad &\text{ and } &\nablago=0 &\quad \text{ if } d=1. 
\end{align*}
\begin{remark}All the results proved in this paper do not need the assumption that the typical wave lengths are the same in both directions, ie $\gamma = 1$. However, if one is not interested in the dependence of $\gamma$, it is possible to take $\gamma = 1$ in all the following proofs. A typical situation where $\gamma\neq 1$ is for weakly transverse waves for which $\gamma=\sqrt{\mu}$; this leads to weakly transverse Boussinesq systems and the Kadomtsev–Petviashvili equation (see \cite{lannes_saut}). A byproduct of our results is therefore a generalization to the case of nonflat bottoms of the results on weakly transverse Boussinesq systems of \cite{lannes_saut}; this opens new perspectives towards the derivation and justification of Kadomtsev–Petviashvili equations for nonflat bottoms in the spirit of \cite{samer} for the KdV equation.
\end{remark}
We define $a\vee b$ for two real numbers $a,b$ by: $$a\vee b = \max(a,b).$$
For all $\alpha =(\alpha_1,...,\alpha_d)\in\mathbb{N}^d$, we define $\partial^\alpha$ the operator of $\mathcal{S}'(\R^d)$ by:
$$\partial^\alpha = \partial_{x_1}^{\alpha_1}...\partial_{x_d}^{\alpha_d}.$$
 We use the classical Fourier multiplier 
$$\Lambda^s = (1-\Delta)^{s/2} \text{ on } \mathbb{R}^d$$ defined by its Fourier transform as $$\mathcal{F}(\Lambda^s u)(\xi) = (1+\vert\xi\vert^2)^{s/2}(\mathcal{F}u)(\xi)$$ for all $u\in\mathcal{S}'(\mathbb{R}^d)$. We also use the following operators:
$$\forall\Vu,\qquad \Tb\Vu = -\frac{1}{3h_b}\nablag(h_b^3\nablag\cdot\Vu)+\frac{\beta}{2h_b}[\nablag(h_b^2\nablag b\cdot\Vu)-h_b^2\nablag b\nablag\cdot\Vu]+\beta^2\nablag b\nablag b\cdot\Vu$$ in all this paper, and the operators:
$$\A=\nablag(\hbi\nablag\cdot(h_b\cdot)),\qquad \Bi = (I+\mu\Tb-\mu\nablag(\hbi\nablag\cdot(h_b\cdot))-\mu\hbi\nablago\nablago\cdot)$$ in Section \ref{c3:section04}.

\subsubsection{Functional spaces}
The standard scalar product on $L^2(\mathbb{R}^d)$ is denoted by $(\quad,\quad)_2$ and the associate norm $\vert\cdot\vert_2$. We will denote the  standard scalar product on Sobolev spaces  $H^s(\R^d)$  by $(\quad,\quad)_{H^s}$ and the associate norm by $\vert \cdot\vert_{H^s}$. We denote the norm $W^{k,\infty}(\R^d)$ by  $\vert \cdot\vert_{W^{k,\infty}},$ and we use the notation $  \vert\cdot\vert_\infty=\vert\cdot\vert_{W^{0,\infty}}$ when no ambiguity is possible. \par\vspace{\baselineskip} We also introduce in Section \ref{c3:section03} for all $s\in\R$ the Banach space $$X^s(\R^d) = \lbrace f\in L^2(\R^d)^d,\vert f\vert_{X^s}<\infty\rbrace$$ endowed with the norm $$\vert f\vert_{X^s} = \vert f\vert_{X^s}^2 = \vert f\vert_{H^s}^2+\mu\vert\nablag\cdot f\vert_{H^s}^2.$$

\section{Local existence for the Boussinesq-Peregrine equation}\label{c3:section01}
In this section, we prove a local existence result for the Boussinesq-Peregrine equation in dimension $d=1,2$, on a time interval independent on $\mu$. We recall that the Boussinesq-Peregrine equation of unknowns $\Vu$ and $\zeta$ is:
\begin{align}
\begin{cases}
\partial_t\zeta+\nablag\cdot(h\Vu)=0 \\
[1+\mu\Tb]\partial_t\Vu+\epsilon(\Vu\cdot\nablag)\Vu+\nablag\zeta = 0 \label{c3:boussinesq_equation}
\end{cases}
\end{align} where $h=1+\epsilon\zeta-\beta b$ and where $\Tb$ is the following operator : 
\begin{equation}\forall\Vu,\qquad \Tb\Vu = -\frac{1}{3h_b}\nablag(h_b^3\nablag\cdot\Vu)+\frac{\beta}{2h_b}[\nablag(h_b^2\nablag b\cdot\Vu)-h_b^2\nablag b\nablag\cdot\Vu]+\beta^2\nablag b\nablag b\cdot\Vu,\label{c3:deftb}\end{equation} with the notation  $h_b = 1-\beta b$, where $b$ describes the variations of the bottom and is known. We also recall (see section \ref{c3:notations}) the notation $$\vert f\vert_{X^s} =  \vert f\vert_{H^s}^2+\mu\vert\nablag\cdot f\vert_{H^s}^2.$$ Let us state a local existence result for the Boussinesq-Peregrine equation:

\begin{theorem}
Let $t_0>d/2$ and $s>t_0+1$. Let $b\in H^{s+2}(\R^d)$ be such that there exists $h_{\min}>0$ such that $$\underset{X\in\R^d}{\inf} h_b(X) \geq h_{\min}.$$ Let $\epsilon$, $\beta$ be such that $$0\leq \epsilon,\beta\leq 1.$$ Let $U_0=(\zeta_0,\Vu_0)\in H^s(\R^d)\times X^s(\R^d)$.  Then, there exists $\mu_{\max}>0$ such that for all $0\leq\mu\leq\mu_{\max}$ with $$ \epsilon = O(\mu),$$ there exists $T^*>0$ and a unique solution $U=(\zeta,\Vu)\in C([0;T^*[;H^s(\R^d)^{d+1})$ of the equation \eqref{c3:boussinesq_equation} with initial condition $U(0) = U_0$.\par\vspace{\baselineskip}
 
 Moreover, for all $T<T^*$, if one chooses $$\lambda \geq    \underset{t\in[0;T]}{\sup} C_3(\vert U\vert_{W^{1,\infty}},\vert h_b\vert_{H^{s+2}},\mu_{\max})(t),$$ the solution $U$ satisfies the following inequality:
\begin{equation}\forall  t\leq T, \quad \mathcal{E}(t)  \leq\frac{\lambda}{C_1(h_{\min})} \int_0^t  e^{\frac{\lambda}{C_1(h_{\min})}(t-t')} C_4(\vert U\vert_{H^s})(t')dt'+ \frac{C_2(\vert h_b\vert_{H^{t_0}})}{C_1(h_{\min})}\mathcal{E}(0)e^{\frac{\lambda}{C_1(h_{\min})}t}\label{c3:estimate_sol}\end{equation}
with \begin{equation}\mathcal{E}(t) = (\mu\vert \nablag\cdot \Vu\vert_{H^s}^2+\vert U\vert_{H^s}^2)(t)\label{c3:energy_theorem}\end{equation} and where $C_i$ are non decreasing smooth functions of their arguments, for $i=1,2,3,4.$\label{c3:boussi_theorem}
\end{theorem}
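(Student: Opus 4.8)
I would treat \eqref{c3:boussinesq_equation} as a quasilinear system and run the energy method, the only non-standard feature being the second order operator $\Tb$. The starting point is the observation, already made in the introduction, that multiplying the velocity equation by $h_b$ symmetrizes the first order part of the system \emph{up to} a term $\epsilon\zeta\,\nablag\cdot\Vu$ (because $h-h_b=\epsilon\zeta$), and that $h_b(I+\mu\Tb)$ is symmetric with $(h_b(I+\mu\Tb)\Vu,\Vu)_2$ equivalent to $\vert\Vu\vert_2^2+\mu\vert\nablag\cdot\Vu\vert_2^2$, the equivalence constants depending on $h_{\min}$ and $\vert h_b\vert_{H^{t_0}}$. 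Accordingly I would work with the energy
\[
\mathcal{E}^s(t)=\sum_{\vert\alpha\vert\le s}\Big(\vert\partial^\alpha\zeta\vert_2^2+\big(h_b(I+\mu\Tb)\partial^\alpha\Vu,\partial^\alpha\Vu\big)_2\Big),
\]
which satisfies $C_1(h_{\min})\mathcal{E}(t)\le\mathcal{E}^s(t)\le C_2(\vert h_b\vert_{H^{t_0}})\mathcal{E}(t)$ with $\mathcal{E}$ as in \eqref{c3:energy_theorem}; the coercivity lower bound comes from the leading term $-\tfrac13\nablag(h_b^3\nablag\cdot\,\cdot)$ of $h_b\Tb$ after integration by parts, the $\beta$- and $\beta^2$-terms being absorbed by Young's inequality.

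\textbf{The a priori estimate.} I would apply $\partial^\alpha$ for $\vert\alpha\vert\le s$ to both equations and differentiate $\mathcal{E}^s$ in time. Since $h_b$ is time independent and $h_b(I+\mu\Tb)$ is symmetric, no commutator between $h_b(I+\mu\Tb)$ and $\partial_t$ appears, and the first order terms obtained by pairing the differentiated equations against $\partial^\alpha\zeta$ and $h_b\partial^\alpha\Vu$ cancel up to $-\epsilon(\zeta\,\nablag\cdot\partial^\alpha\Vu,\partial^\alpha\zeta)_2-\epsilon(\nablag\zeta\cdot\partial^\alpha\Vu,\partial^\alpha\zeta)_2$. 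The second term is $\le\epsilon\vert\nablag\zeta\vert_\infty\mathcal{E}$; for the first I would write it as $\tfrac{\epsilon}{\sqrt\mu}\vert\zeta\vert_\infty\,(\sqrt\mu\vert\nablag\cdot\partial^\alpha\Vu\vert_2)\,\vert\partial^\alpha\zeta\vert_2$ and use $\epsilon=O(\mu)$, so that $\tfrac{\epsilon}{\sqrt\mu}\le C\sqrt{\mu_{\max}}$ and $\sqrt\mu\vert\nablag\cdot\partial^\alpha\Vu\vert_2\le\sqrt{\mathcal{E}}$; this is the term responsible for the exponential rate $C_3$ in \eqref{c3:estimate_sol}. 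The transport term $\epsilon\partial^\alpha(\Vu\cdot\nablag\Vu)$ and the commutators $[\partial^\alpha,h]\nablag\cdot\Vu$, $[\partial^\alpha,\nablag h\cdot]\Vu$ are handled by tame Kato--Ponce/Moser estimates. The delicate contribution is $-(h_b[\partial^\alpha,\mu\Tb]\partial_t\Vu,\partial^\alpha\Vu)_2$: here I would use the velocity equation, $\partial_t\Vu=(I+\mu\Tb)^{-1}(-\epsilon\Vu\cdot\nablag\Vu-\nablag\zeta)$, together with the ellipticity of $I+\mu\Tb$, to get $\vert\partial_t\Vu\vert_{H^{s-1}}+\sqrt\mu\vert\nablag\cdot\partial_t\Vu\vert_{H^{s-1}}\le C(\vert U\vert_{H^s})$; expanding the leading part of $\mu\Tb$ as $-\tfrac{\mu}{3}\nablag(h_b^3\nablag\cdot\,\cdot)$, commuting with $\partial^\alpha$ and moving the outer $\nablag$ onto $h_b\partial^\alpha\Vu$, one is left with $\mu$ times the $L^2$ pairing of $[\partial^\alpha,h_b^3]\nablag\cdot\partial_t\Vu$ and $\nablag\cdot(h_b\partial^\alpha\Vu)$, which closes after splitting $\mu=\sqrt\mu\cdot\sqrt\mu$ and distributing the two half-powers onto $\nablag\cdot\partial_t\Vu$ and $\nablag\cdot\partial^\alpha\Vu$; the $\beta$-, $\beta^2$- and lower order pieces of $\Tb$ are treated the same way and require $b\in H^{s+2}$. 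Collecting all contributions yields a differential inequality of the form $\tfrac{d}{dt}\mathcal{E}^s\le\lambda\big(\mathcal{E}^s+C_4(\vert U\vert_{H^s})\big)$ with $\lambda=\sup_{[0,T]}C_3(\vert U\vert_{W^{1,\infty}},\vert h_b\vert_{H^{s+2}},\mu_{\max})$, and \eqref{c3:estimate_sol} follows from Gronwall's lemma together with the equivalence $\mathcal{E}^s\sim\mathcal{E}$.

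\textbf{Existence, uniqueness, construction.} With the a priori estimate in hand, I would construct solutions by the usual scheme: regularize \eqref{c3:boussinesq_equation} with Friedrichs mollifiers $J_\delta$ (or iterate on the linearization around the previous approximation), solve the resulting ODE in $H^s(\R^d)^{d+1}$ by Cauchy--Lipschitz --- using that for fixed $\mu>0$ the operator $(I+\mu\Tb)^{-1}$ is bounded on each $H^k$ --- obtain from the energy estimate a time $T^*$ of existence independent of $\delta$, and pass to the limit by compactness. Uniqueness and strong continuity in time in $H^s\times X^s$ follow from an $X^0$-level energy estimate on differences of solutions, using the same symmetrization, and the blow-up criterion in $W^{1,\infty}$ is a consequence of \eqref{c3:estimate_sol}.

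\textbf{Main obstacle.} The heart of the matter is the commutator $[\partial^\alpha,\mu\Tb]\partial_t\Vu$: since $\Tb$ is second order this commutator is of order $\vert\alpha\vert+1$, while $\partial_t\Vu$ is only controlled through the equation, i.e.\ in an $X^{s-1}$ sense after inverting $I+\mu\Tb$. Making these fit --- gaining two derivatives from $(I+\mu\Tb)^{-1}$ at the price of $\mu$-weights, then redistributing the weights as $\sqrt\mu\cdot\sqrt\mu$ so everything is absorbed into $\mathcal{E}$, and using $\epsilon=O(\mu)$ to tame the residual $\epsilon\zeta\,\nablag\cdot\partial^\alpha\Vu$ --- is the crux of the proof; it is also precisely why the time of existence is only $O(1)$: that residual term feeds into the exponential rate $\lambda$, not into the source term.
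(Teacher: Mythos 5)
Your proposal is correct and follows essentially the same route as the paper: symmetrize with $h_b$ rather than $h$, work with the energy $\vert\zeta\vert_{H^s}^2+(h_b(I+\mu\Tb)\Lambda^s\Vu,\Lambda^s\Vu)_2$, absorb the non-symmetrized residual $\epsilon\zeta\nablag\cdot\Vu$ using $\epsilon=O(\mu)$, and handle the commutator of $\mu\Tb$ with $s$ derivatives by substituting $\partial_t\Vu$ from the equation, exploiting the $\sqrt\mu$-weighted divergence control of $(h_b(I+\mu\Tb))^{-1}$ and Kato--Ponce, before closing with Gronwall. The only (inessential) difference is presentational: the paper carries out this energy estimate directly on the $(1-\delta\Delta)$-mollified system, tracking $\delta$-uniformity, rather than as a separate a priori estimate followed by regularization.
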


\begin{remark} \label{c3:remark_estimate}\begin{itemize}[label=--,itemsep=0pt] \item It is very important to note that the energy estimate \eqref{c3:estimate_sol} implies that while $\vert U\vert_{W^{1,\infty}}(t)$ is bounded, the solution $U$ can be continued. More precisely, if one sets $$T = \sup\lbrace t, U \text{ exists on } C([0;t[;H^s(\R^d)^{d+1})\rbrace$$ then if $T<\infty$, one has $\underset{t\rightarrow T}{\sup} \vert U\vert_{W^{1,\infty}}(t) = +\infty$. Moreover, one has to notice that the energy $ \mathcal{E}$ defined in the statement of the Theorem by \eqref{c3:energy_theorem} controls $H^s$ norms of both $U$ and $\nablag\cdot \Vu$, while the energy estimate \eqref{c3:estimate_sol} only requires a  bound of $\mathcal{E}$  by the $H^s$ norm of $U$. In particular, it suffices to bound the $H^s$ norm of $U$ (instead of $U$ and $\nablag\cdot \Vu$) to use a continuity argument.
\item Note that the time of existence $T^*$ defined by Theorem \ref{c3:boussi_theorem} is independent on $\mu$. This is crucial in view of the proof of the consistency of the Boussinesq-Peregrine equation with the Water-Waves equations (see Theorem \ref{c3:consistance_def}).
\end{itemize}\end{remark}

\begin{proof}[Proof of Theorem \ref{c3:boussi_theorem}]

The system \eqref{c3:boussinesq_equation} can be put under the form 
 
 \begin{equation*}
B \partial_t U + \sum_{j=1}^d A_j(U) \partial_j U = 0
 \end{equation*}
 where \begin{equation}
 B = \begin{pmatrix}
 1 &0 \\
 0 & I+\mu\Tb
 \end{pmatrix},\qquad A_j(U) = \begin{pmatrix}
  \epsilon\Vu_j & h \\
 1 &\epsilon \Vu_jI_d
 \end{pmatrix}\qquad \forall 1\leq j\leq d.\label{c3:defmatrix}
  \end{equation}
The non linear terms of the equation can be symmetrized if we multiply the system by 
\begin{equation*}
\tilde{S}(U) = \begin{pmatrix}
1 &0 \\
0 &hI_d
\end{pmatrix}
\end{equation*} but for the reasons explained in Section \ref{c3:intro_explication} we use rather the following matrix:
\begin{equation}
S = \begin{pmatrix}
1 &0 \\
0 &h_b I_d
\end{pmatrix}.\label{c3:defsymmetrizer}
\end{equation}
Using this symmetrizer brings one difficulty: the operator $h_b(I+\mu\Tb)$ is symmetric, but multiplying the second equation of \eqref{c3:boussinesq_equation} by $h_b$ does not symmetrize the non linear terms of the form $A_j(U)\partial_j U$ defined by \eqref{c3:defmatrix}, for $j=1,..,d$.  The proof of Theorem \ref{c3:boussi_theorem} is inspired of the classical existence result for quasilinear hyperbolic systems (see \cite{taylor3} Chapter XVI for instance). We follow the following steps:\begin{itemize}[label=--,itemsep=0pt]
\item Step 1 : We solve a smoothed equation involving a mollifier $(1-\delta\Delta)$.
\item Step 2 : We prove that the existence time of the solution of the mollified equation does not depend on $\delta$, and the uniform bound in $H^s$ norm of this solution.
\item Step 3 : We pass to the limit $\delta$ goes to zero in the mollified equation to get a solution of the equation \eqref{c3:boussinesq_equation}.
\item Step 4 : We recover regularity for the solution of \eqref{c3:boussinesq_equation}.\end{itemize}

\textbf{Step 1}\qquad We solve the Cauchy problem  \begin{align}
\begin{cases}
(1-\delta\Delta)SB(1-\delta\Delta)\partial_t \Ud + \sum_{i=1}^d SA_j(\Ud)\partial_j \Ud = 0\label{c3:regularisee} \\
\Ud(0) = (1-\delta\Delta)^{-1} U_0,
 \end{cases}
\end{align}
of unkown $\Ud$ in the Banach space $H^s(\R^d)^{d+1}$. Recall that using the definition of $B$ given in \eqref{c3:defmatrix} and $S$ given by \eqref{c3:defsymmetrizer}, one has \begin{equation*} SB = \begin{pmatrix}
1 &0 \\
0 &h_b (I_d+\mu\Tb)
\end{pmatrix}.\end{equation*} In order to apply the Cauchy-Lipschitz Theorem, one must check that the application 
\begin{displaymath} \left. \begin{array}{rcl}
&H^s(\R^d)^{d+1} &\longrightarrow H^s(\R^d)^{d+1} \\
&U &\longmapsto (1-\delta\Delta)^{-1}(SB)^{-1}(1-\delta\Delta)^{-1} \sum_{i=1}^d A_j(U) \partial_j U
\end{array}\right.\end{displaymath}
is well defined and locally Lipschitz. The unique difficulty is to check that $(h_b(I+\mu\Tb))^{-1}$ is well defined from $H^{s}$ to $H^s$. It is the point of the following Proposition (see \cite{david} Chapter 5 Lemma 5.44 for a full proof). We first define the Banach space $$X^s = \lbrace V\in H^s(\R^d)^{d}, \nablag\cdot V\in H^s(\R^d) \rbrace$$ endowed with the norm $$\vert V\vert_{X^s}^2 = \vert V\vert_{H^s}^2+\mu\vert \nablag\cdot V\vert_{H^s}^2.$$

\begin{proposition}
Let $t_0>d/2$, $\beta\leq 1$ and $b\in H^{t_0+1}(\R^d)$ be such that there exists $h_{\min}$ such that $h_b=1-\beta b \geq h_{\min}$. Then the mapping $$h_b(I+\mu\Tb):X^0\longrightarrow L^2(\R^d)^d+\nablag L^2(\R^d)$$ is well defined, one-to-one and onto. 	One has, for  all $V\in X^0$: 
\begin{equation*}
C_1(h_{\min})\vert V\vert_{X^0}^2 \leq (h_b(I+\mu\Tb)V,V)_2\leq C_2(\vert h_b\vert_{H^{t_0+1}})\vert V\vert_{X^0}^2
\end{equation*}
   where $C_i$ are non decreasing functions of its arguments.  Moreover, for all $s\geq 0$, if $b\in H^{1+s\vee t_0}(\R^d)$, then $$\forall  W\in H^s(\R^d)^d, \vert (h_b(I+\mu\Tb))^{-1} W\vert_{X^s} \leq C(\frac{1}{h_{\min}},\vert b\vert_{H^{1+s\vee t_0}}) \vert W\vert_{H^s}$$ where $C$ is a non decreasing function of its arguments. Moreover, one has, for all $s\in\R$:

$$\sqrt{\mu}\vert (h_b(I+\mu\Tb))^{-1}\nablag W\vert_{H^s} \leq C(\frac{1}{h_{\min}},\vert b\vert_{H^{1+\vert s\vert\vee t_0}})\vert W\vert_{H^s}.$$

\label{c3:regu_inverse}\end{proposition}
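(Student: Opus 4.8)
The plan is to recast the identity $h_b(I+\mu\Tb)V=F$ as a variational problem for a symmetric, $X^0$-coercive bilinear form, to solve it by Lax--Milgram, and then to bootstrap to $H^s$ regularity by commutator estimates and an induction on $s$ (this is also done in \cite{david}).

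First I would compute the bilinear form. Using $h_b=1-\beta b$, so that $\beta\nablag b=-\nablag h_b$, an integration by parts shows that for all $V,W\in X^0$ one has $(h_b(I+\mu\Tb)V,W)_2=a(V,W)$, where
\begin{equation*}
a(V,W)=\int_{\R^d}h_b\,V\cdot W+\mu\Big(\tfrac13\int h_b^3(\nablag\cdot V)(\nablag\cdot W)+\tfrac12\int h_b^2\big[(\nablag h_b\cdot V)(\nablag\cdot W)+(\nablag\cdot V)(\nablag h_b\cdot W)\big]+\int h_b(\nablag h_b\cdot V)(\nablag h_b\cdot W)\Big).
\end{equation*}
This form is symmetric; by Cauchy--Schwarz and the embedding $H^{t_0+1}\hookrightarrow W^{1,\infty}$ (valid since $t_0>d/2$) it is continuous on $X^0\times X^0$ with constant $C_2(\vert h_b\vert_{H^{t_0+1}})$. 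For coercivity I would bound the integrand pointwise: with $x=h_b\nablag\cdot V$ and $y=\nablag h_b\cdot V$ the $\mu$-part of $a(V,V)$ is $\mu\int h_b\big(\tfrac13 x^2+xy+y^2\big)$, and since the matrix $\left(\begin{smallmatrix}1/3 & 1/2 \\ 1/2 & 1\end{smallmatrix}\right)$ is positive definite this gives $a(V,V)\ge h_{\min}\vert V\vert_2^2+c\,\mu h_{\min}^3\vert\nablag\cdot V\vert_2^2\ge C_1(h_{\min})\vert V\vert_{X^0}^2$ for an explicit $c>0$, which is the two-sided bound claimed. Since for $V\in X^0$ every term of $h_b(I+\mu\Tb)V$ displayed above lies in $L^2(\R^d)^d+\nablag L^2(\R^d)$, the operator maps into that space; conversely, writing $F=F_0+\nablag F_1$, the functional $W\mapsto(F_0,W)_2-(F_1,\nablag\cdot W)_2$ is continuous on $X^0$, so Lax--Milgram yields a unique $V\in X^0$ with $a(V,\cdot)=(F_0,\cdot)_2-(F_1,\nablag\cdot\,)_2$, i.e. (testing against $C_c^\infty$ fields) $h_b(I+\mu\Tb)V=F$; injectivity is immediate from coercivity. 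Hence $h_b(I+\mu\Tb):X^0\to L^2(\R^d)^d+\nablag L^2(\R^d)$ is a bijection.

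For the $H^s$ estimates, given $W\in H^s(\R^d)^d$ with $s\ge 0$ set $V=(h_b(I+\mu\Tb))^{-1}W$. The case $s=0$ follows from $a(V,V)=(W,V)_2$ and coercivity, giving $\vert V\vert_{X^0}\le C(1/h_{\min})\vert W\vert_2$. For $s>0$ one argues by induction on $s$: applying $\Lambda^s$ to $h_b(I+\mu\Tb)V=W$ and pairing with $\Lambda^s V$ gives
\begin{equation*}
C_1(h_{\min})\vert\Lambda^sV\vert_{X^0}^2\le(\Lambda^sW,\Lambda^sV)_2+\big([h_b(I+\mu\Tb),\Lambda^s]V,\Lambda^sV\big)_2,
\end{equation*}
the first term being $\le\vert W\vert_{H^s}\vert\Lambda^sV\vert_{X^0}$. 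The commutator term is the crux, and the main obstacle of the proof: since $\Tb$ has order two, $[h_b\Tb,\Lambda^s]$ has order $s+1$, so this term is \emph{a priori the same size} as the coercive term $\mu\vert\nablag\cdot V\vert_{H^s}^2$ on the left. It is rescued by two facts. Because $\nablag$ commutes with $\Lambda^s$, the contribution of the principal part $-\tfrac{\mu}{3}\nablag(h_b^3\nablag\cdot V)$, after moving one $\nablag$ across, is $\tfrac{\mu}{3}([\Lambda^s,h_b^3]\nablag\cdot V,\Lambda^s\nablag\cdot V)_2$; a standard commutator estimate gains one derivative and bounds it by $C(\vert b\vert_{H^{1+s\vee t_0}})\,(\sqrt\mu\vert\nablag\cdot V\vert_{H^{s-1}})(\sqrt\mu\vert\nablag\cdot V\vert_{H^s})$; and then $\sqrt\mu\vert\nablag\cdot V\vert_{H^{s-1}}\le\vert V\vert_{X^{(s-1)\vee 0}}\le C(1/h_{\min},\vert b\vert_{H^{1+s\vee t_0}})\vert W\vert_{H^s}$ by the induction hypothesis (or the $s=0$ case when $s\le 1$). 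The lower-order commutators (from the $\beta$- and $\beta^2$-parts of $\Tb$ and from $[h_b,\Lambda^s]$) are treated identically. Young's inequality and absorption then give $\vert V\vert_{X^s}\le C(1/h_{\min},\vert b\vert_{H^{1+s\vee t_0}})\vert W\vert_{H^s}$; the non-integer case is identical (or follows by interpolation), and the formal manipulations are justified by inserting Friedrichs mollifiers and passing to the limit. The last estimate is obtained the same way: with $V=(h_b(I+\mu\Tb))^{-1}\nablag W$, applying $\Lambda^s$ and pairing with $\Lambda^sV$ produces the extra term $-(\Lambda^sW,\nablag\cdot\Lambda^sV)_2\le\tfrac{1}{2C_1\mu}\vert W\vert_{H^s}^2+\tfrac{C_1\mu}{2}\vert\nablag\cdot V\vert_{H^s}^2$, whose second summand is absorbed by coercivity, which after the same commutator bookkeeping yields $\sqrt\mu\vert V\vert_{H^s}\le\sqrt\mu\vert\Lambda^sV\vert_{X^0}\le C(1/h_{\min},\vert b\vert_{H^{1+\vert s\vert\vee t_0}})\vert W\vert_{H^s}$ for every $s\in\R$.

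I expect the genuine difficulty to be, throughout, keeping every constant independent of $\mu$, and this is precisely why the $X^s$ scale with its $\sqrt\mu$-weighted divergence term is the natural one: the algebraic identity $\beta\nablag b=-\nablag h_b$ turns the $\mu$-part of $a(V,V)$ into a genuinely positive-definite quadratic form rather than one degenerating relative to the $L^2$ part, and the one-derivative gain of the commutators, married to the $\sqrt\mu$ weight, is what lets the order-$(s+1)$ commutator of $\Lambda^s$ with $\mu\Tb$ be absorbed into the coercive term.
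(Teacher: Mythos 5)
Your proposal is correct, and it is essentially the proof the paper points to rather than reproduces: the paper gives no argument for this proposition and refers to \cite{david} (Chapter 5, Lemma 5.44), whose standard route you follow — the symmetric bilinear form obtained by integrating by parts and using $\beta\nablag b=-\nablag h_b$, its $X^0$-coercivity via the positive-definite form $\tfrac13 x^2+xy+y^2$ (with $x=h_b\nablag\cdot V$, $y=\nablag h_b\cdot V$), Lax--Milgram for the bijectivity onto $L^2+\nablag L^2$, and a $\Lambda^s$-commutator bootstrap with the $\sqrt{\mu}$-weighted divergence absorbing the order-$(s+1)$ commutator of $\mu\Tb$ with $\Lambda^s$. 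The one point to tighten is the last estimate for $s<0$: there $(h_b(I+\mu\Tb))^{-1}\nablag W$ must first be given a meaning for $W\in H^s$, and it is cleaner to deduce that case from the $s\geq 0$ bound by density together with the self-adjointness/duality argument that the paper itself employs in \eqref{c3:inter5}, rather than by rerunning the commutator bookkeeping at negative regularity.
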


Therefore, the Cauchy-Lipschitz Theorem applies and the equation \eqref{c3:regularisee} has a unique solution $\Ud\in C([0;T^\delta[;H^s(\R^d)^{d+1})$, and if $T^\delta <+\infty$ one has $$\underset{t\rightarrow T^\delta}{\lim} \vert \Ud(t)\vert_{H^s} = +\infty.$$
   
\textbf{Step 2}\qquad We now check that one can choose $T^\delta$ independent of $\delta$ by comparing $\vert \Ud(t)\vert_{H^s}$ with a solution of an ODE independent of $\delta$, and using a Gronwall Lemma.  We define $$\Ud_s = \Lambda^s \Ud.$$ The unknown $\Ud_s$ satisfies the following system :
\begin{equation}
(1-\delta\Delta)SB(1-\delta\Delta)\partial_t \Ud_s + \sum_{j=1}^d SA_j(\Ud)\partial_j \Ud_s = F\label{c3:quasiboussi}
\end{equation}
where we wrote the commutators under the form 
\begin{equation}
F = (1-\delta\Delta)[SB,\Lambda^s](1-\delta\Delta)\partial_t \Ud+\sum_{j=1}^d [SA_j(\Ud),\Lambda^s]\partial_j \Ud.\label{c3:inter2}
\end{equation}
In order to estimate, $\vert\Ud(t)\vert_{H^s}$, recall that \begin{equation*} SB = \begin{pmatrix}
1 &0 \\
0 &h_b (I_d+\mu\Tb)
\end{pmatrix}\end{equation*} and remark, using Proposition \eqref{c3:regu_inverse}, that $$\vert (1-\delta\Delta) V^\delta_s\vert_{X^0}\sim (h_b(I+\mu\Tb)(1-\delta\Delta)V^\delta_s,(1-\delta\Delta)V^\delta_s)_2$$ where the implicit constant only depend on $h_b$. Therefore, one computes:
   \begin{align*}
  \frac{d}{dt}\frac{1}{2}((1-\delta\Delta)S B(1-\delta\Delta) \Ud_s,\Ud_s)_2 &= ((1-\delta\Delta)SB(1-\delta\Delta)\partial_t \Ud_s,\Ud_s)_2.
   \end{align*}
Note that the symmetry of $SB$, and more precisely of $h_b(I+\mu\Tb)$ is crucial here. One uses the equation \eqref{c3:quasiboussi} to replace $(1-\delta\Delta)SB(1-\delta\Delta)\partial_t \Ud_s$ by its expression. One gets:
   \begin{align}
  \frac{d}{dt}\frac{1}{2}((1-\delta\Delta)S B(1-\delta\Delta) \Ud_s,\Ud_s)_2 &= -\sum_{j=1}^d(S A_j(\Ud)\partial_j \Ud_s,\Ud_s)_2+(F,\Ud_s)_2. \label{c3:derivee_energie}
   \end{align}
Let us check that the first term of the rhs of \eqref{c3:derivee_energie} has a contribution of order zero to the energy estimate. One uses the definition of $A_j$ given by \eqref{c3:defmatrix} to put this matrix under the form $A_j(\Ud)=\tilde{A_j}(\Ud)+C(\Ud)$ with
\begin{equation}\tilde{A_j}   (\Ud) = \begin{pmatrix}
\epsilon\Vu_j^\delta &h_b \\
1 &\epsilon\Vu_j^\delta I_d
\end{pmatrix},\qquad C(\Ud) = \begin{pmatrix}
0 & \epsilon\zeta^\delta \\
0 &0
\end{pmatrix},\label{c3:defmatrixtilde}\end{equation}
for $j=1,..,d$. Note that since $S$ is not a true symmetrizer for the equation \eqref{c3:boussinesq_equation}, the matrix $SA_j$ is not symmetric. The above decomposition allows us to write $SA_j$ under the form of a symmetric matrix ($S\tilde{A_j}$) plus a rest which we intend to control in the energy estimates. We now write:  \begin{equation}\sum_{j=1}^d(S A_j(\Ud)\partial_j \Ud_s,\Ud_s)_2   = \sum_{j=1}^d(S C(\Ud)\partial_j \Ud_s,\Ud_s)_2+\sum_{j=1}^d(S \tilde{A_j}(\Ud)\partial_j \Ud_s,\Ud_s)_2.\label{c3:symmetric_term}\end{equation} Using the definition of $C(\Ud)$ given by \eqref{c3:defmatrixtilde}, the first term  of the rhs of \eqref{c3:symmetric_term} is equal to $\ds \int \zeta^\delta \zeta_s^\delta\epsilon \nablag\cdot\Vu_s^\delta$ and therefore is controlled, using the Cauchy-Schwarz inequality by \begin{equation}\ds \vert  \int \epsilon \zeta^\delta\nablag\cdot\Vu_s^\delta\zeta_s^\delta\vert \leq \vert \zeta^\delta\vert_{W^{1,\infty}}\vert \mu\nablag\cdot\Vu_s^\delta\vert_2\vert\zeta_s^\delta\vert_2 \label{c3:constant1}\end{equation}where we used the Boussinesq regime condition $$\epsilon\leq C\mu$$ stated by the Theorem. For the second term of the rhs of \eqref{c3:symmetric_term}, one can write, for all $1\leq j\leq d$, and using the symmetry of $S\tilde{A_j}(\Ud)$ (recall the definition of the symmetrizer $S$ given by \eqref{c3:defsymmetrizer}):  
   \begin{align*}
   (S\tilde{A_j}( \Ud)\partial_j \Ud_s,\Ud_s)_2 &= (\partial_j \Ud_s, S\tilde{A_j}(\Ud) \Ud_s)_2 \\
   &= -\big(\Ud_s,\partial_j (S\tilde{A_j}(\Ud)\Ud_s)\big)_2
   \end{align*}
   by integrating by parts. Now, one has 

   $$-\big(\Ud_s,\partial_j (S\tilde{A_j}(\Ud)\Ud_s)\big)_2 = -\big(\Ud_s,\partial_j(S\tilde{A_j}(\Ud))\Ud_s\big)_2-(\Ud_s,S\tilde{A_j}(\Ud) \partial_j \Ud_s)_2$$
and thus one has 
   \begin{equation}(S\tilde{A_j} (\Ud)\partial_j \Ud_s,\Ud_s)_2 = -\frac{1}{2}\big(\Ud_s,\partial_j(S\tilde{A_j}(\Ud))\Ud_s\big)_2\label{c3:inter1}.\end{equation} Using the definition of $S$ given by $\eqref{c3:defsymmetrizer}$, and the definition of $\tilde{A}_j$ given by \eqref{c3:defmatrixtilde}, one has $$S\tilde{A}_j=\begin{pmatrix}
   \epsilon\Vu^\delta_j &h_b \\
   h_b &\epsilon h_b\Vu^\delta_j I_d
\end{pmatrix}$$ for all $j=1,..,d$    and thus \eqref{c3:inter1} is controlled, using Cauchy-Schwarz inequality:
   \begin{equation}\vert(S\tilde{A_j} \Ud\partial_j \Ud_s,\Ud_s)_2 \vert\leq  \vert \Ud_s\vert_2^2 c_2(\vert \Ud\vert_{W^{1,\infty}},\vert h_b\vert_{W^{1,\infty}}),\label{c3:constant2}\end{equation} where $c_2$ is a non decreasing and smooth function of its arguments.	\par\vspace{\baselineskip}
   
   We now control the second term of the right hand side of the energy estimate \eqref{c3:derivee_energie}. Using the definition of $F$ given by \eqref{c3:inter2}, one has that $$(F,\Ud_s)_2  = A_3+A_4$$ where $$A_3 = ((1-\delta\Delta)[SB,\Lambda^s](1-\delta\Delta)\partial_t \Ud,\Ud_s)_2$$ and $$A_4 = ([SA_j(\Ud),\Lambda^s]\partial_j \Ud,\Ud_s)_2.$$

\textit{- Control of $A_3$}\qquad We start by  replacing $(1-\delta\Delta)\partial_t \Ud$ by its expression given in the equation \eqref{c3:regularisee}:
\begin{equation}\begin{aligned}
&((1-\delta\Delta)[SB,\Lambda^s](1-\delta\Delta)\partial_t \Ud,\Ud_s)\\ =&-\sum_{j=1}^d ((1-\delta\Delta)[SB,\Lambda^s](SB)^{-1}(1-\delta\Delta)^{-1}SA_j(\Ud)\partial_j\Ud,\Ud_s)_2.
\end{aligned}\label{c3:inter3}\end{equation}
One has to control this term uniformly  with respect to $\delta$, and deals with the fact that $(h_b(I+\mu\Tb))^{-1}$ is not optimally estimated. More precisely, that is absolutely not clear that\footnote{The operator $h_b(I+\mu\Tb)$ is not technically elliptic of order $1$, since its inverse only controls the divergence (and not a full derivative). This is actually a big issue for all the local existence results for the Boussinesq-Peregrine equation \eqref{c3:boussinesq_equation}. This is also the reason why a Nash-Moser scheme must be used to solve the Green-Naghdi equations in $2d$ (see \cite{alvarez2007nash}).} $$\vert h_b(I+\mu\Tb)f(h_b(I+\mu\Tb))^{-1} u\vert_{{H^s}} \leqslant \vert u\vert_{{H^s}}$$ for a smooth function $f$. One has to recall that $$
 B = \begin{pmatrix}
 1 &0 \\
 0 & I+\mu\Tb
 \end{pmatrix}$$ so that 	
 $$
 [SB,\Lambda^s] = \begin{pmatrix}
 0 &0 \\
 0 & [h_b(I+\mu\Tb),\Lambda^s]
 \end{pmatrix}.$$  Using the definition of $\Tb$ given by \eqref{c3:deftb}, one writes this operator under the form:
 $$I+\mu\Tb = I+\mu(A+B+C+D),$$
where \begin{equation}A = -\nablag(h_b^3\nablag\cdot),\quad B = \beta\nablag(h_b^2\nablag b\cdot),\qquad C=-\beta h_b^2\nablag b\nablag\cdot,\qquad D=\beta^2\nablag b\nablag\cdot \label{c3:defoperateur}\end{equation}
One expands the commutator \eqref{c3:inter3} with respect to $A,B,C,D$. We set $$A_{31} = ((1-\delta\Delta)\mu[A,\Lambda^s](h_b(I+\mu\Tb))^{-1}(1-\delta\Delta)^{-1}SA_j(\Ud)\partial_j\Vde,\Vde_s)_2$$ and we now control this latter term. For all $1\leq j\leq d$, using the definition of $A$ given by \eqref{c3:defoperateur}, one has, integrating by parts:
\begin{align*}
A_{31}=((1-\delta\Delta)[h_b^3,\Lambda^s]\sqrt{\mu}\nablag\cdot(h_b(I+\mu\Tb))^{-1}(1-\delta\Delta)^{-1}SA_j(\Ud)\partial_j\Vde,\sqrt{\mu}\nablag\cdot\Vde_s)_2.
\end{align*}
Using Cauchy-Schwarz inequality and splitting $(1-\delta\Delta)$, one gets:
\begin{align*}
\vert A_{31}\vert \leq &\sqrt{\mu}\vert \nablag\cdot \Vde_s\vert_2\big(\vert [h_b^3,\Lambda^s]\sqrt{\mu}\nablag\cdot(h_b(I+\mu\Tb))^{-1}(1-\delta\Delta)^{-1} SA_j(\Ud)\partial_j\Vde\vert_2\\ +&\delta\vert [h_b^3,\Lambda^s]\sqrt{\mu}\nablag\cdot(h_b(I+\mu\Tb))^{-1}(1-\delta\Delta)^{-1} SA_j(\Ud)\partial_j\Vde\vert_{H^2} \big).
\end{align*}
We now use the Kato-Ponce estimate of Proposition \ref{c3:katoponce} to control $[h_b^3,\Lambda^s]$, using the fact that $s>d/2+1$ (and thus $H^{s-1}(\R^d)$ and $H^s(\R^d)$ are respectively continuously injected into $L^\infty(\R^d)$ and $W^{1,\infty}(\R^d)$):
\begin{equation}\begin{aligned}
\vert A_{31}\vert\leq &\sqrt{\mu}\vert \nablag\cdot \Vde_s\vert_2C_s(\vert \nablag h_b\vert_{H^{s+1}})\big(\vert\sqrt{\mu}\nablag\cdot(h_b(I+\mu\Tb))^{-1}(1-\delta\Delta)^{-1} SA_j(\Ud)\partial_j\Vde\vert_{H^{s-1}} \\
+&\delta\vert\sqrt{\mu}\nablag\cdot(h_b(I+\mu\Tb))^{-1}(1-\delta\Delta)^{-1} SA_j(\Ud)\partial_j\Vde\vert_{H^{s+1}}\big),
\end{aligned}\label{c3:inter4}\end{equation}
where $C_s$ is a smooth non decreasing function which only depends on $s$. We now control the operator $\sqrt{\mu}\nablag\cdot (h_b(I+\mu\Tb))^{-1}$ in $H^{s-1}$ and $H^{s+1}$ norms by using the last part of Proposition \ref{c3:regu_inverse} and a duality argument. One has, for all $k\geq t_0$ and all $u\in H^k(\R^d)^d$, using the symmetry of $h_b(I+\mu\Tb)$ :
\begin{equation}\begin{aligned}
\sqrt{\mu}\vert \nablag\cdot h_b(I+\mu\Tb)^{-1} u\vert_{H^k} &= \sqrt{\mu}\underset{v\in H^{-k}(\R^d)\atop\vert v\vert_{H^{-k}}=1}{\sup} (\nablag\cdot (h_b(I+\mu\Tb))^{-1} u,v)_2 \\
&= \sqrt{\mu}\underset{v\in H^{-k}(\R^d)\atop\vert v\vert_{H^{-k}}=1}{\sup} -( u,(h_b(I+\mu\Tb))^{-1}\nablag v)_2 \\
&\leq \underset{v\in H^{-k}(\R^d)\atop\vert v\vert_{H^{-k}}=1}{\sup}  C(\frac{1}{h_{\min}},\vert  b\vert_{H^{k+1}})\vert u\vert_{H^k}\vert v\vert_{H^{-k}}\\
&\leq C(\frac{1}{h_{\min}},\vert b\vert_{H^{k+1}})\vert u\vert_{H^k},
\end{aligned}\label{c3:inter5}\end{equation}
where $C$ is a smooth non decreasing function of its arguments, and where we used the fact that $k\geq t_0$. Using \eqref{c3:inter5} with $k=s-1$ and $k=s+1$ in \eqref{c3:inter4} (recall that $s>t_0+1$), one gets:
\begin{align*}
\vert A_{31}\vert&\leq C\sqrt{\mu}\vert \nablag\cdot \Vde_s\vert_2C(\frac{1}{h_{\min}},\vert   b\vert_{H^{s+2}})\big(\vert (1-\delta\Delta)^{-1}SA_j(\Ud)\partial_j\Vde\vert_{H^{s-1}}\\
&+\delta\vert (1-\delta\Delta)^{-1}SA_j(\Ud)\partial_j\Vde\vert_{H^{s+1}}\big) \\
&\leq  C\sqrt{\mu}\vert \nablag\cdot \Vde_s\vert_2C(\frac{1}{h_{\min}},\vert  b\vert_{H^{s+2}})\big(\vert SA_j(\Ud)\partial_j\Vde\vert_{H^{s-1}}+C\delta \frac{1}{\delta}\vert SA_j(\Ud)\partial_j\Vde\vert_{H^{s-1}}\big)
\end{align*}
where we used the estimates $\vert (1-\delta\Delta)^{-1} f\vert_{H^{s-1}} \leq \vert f\vert_{H^{s-1}}$ and $\vert (1-\delta\Delta)^{-1} f\vert_{H^{s+1}} \leq \frac{C}{\delta}\vert f\vert_{H^{s-1}}$ with $C$ independent on $\delta$. We recall that $$SA_j(\Ud) = \begin{pmatrix}
\epsilon\Vu^\delta_j &h \\
h_b &\epsilon h_b\Vu^\delta_j
\end{pmatrix}$$ for $j=1,..,d$ and we use the Moser estimate of Proposition \ref{c3:moser} and the fact that $s-1>d/2$ to conclude:
\begin{align*}
\vert A_{31}\vert\leq C(\frac{1}{h_{\min}},\vert  b\vert_{H^{s+2}})\sqrt{\mu}\vert \nablag\cdot\Vde_{s}\vert_2\vert \Vde_s\vert_2^2
\end{align*}
with $C$ a smooth, non decreasing function of its arguments.

To control the term of \eqref{c3:inter3} involving $B$, using the definition of $B$ given by \eqref{c3:defoperateur}, we write, integrating by parts:
\begin{align*}
\sum_{j=1}^d&((1-\delta\Delta)\mu[B,\Lambda^s](h_b(I+\mu\Tb))^{-1}(1-\delta\Delta)^{-1}SA_j(\Ud)\partial_j\Vde,\Vde_s)_2 \\
&= -\mu((1-\delta\Delta)[h_b^3\nablag b\cdot,\Lambda^s](h_b(I+\mu\Tb))^{-1}(1-\delta\Delta)^{-1}SA_j(\Ud)\partial_j\Vde,\nablag\cdot\Vde_s)_2
\end{align*}
and we use exactly the same techniques as used for the control of $A_{31}$ to get the same control. The terms of \eqref{c3:inter3} involving $C$ and $D$ are easily controlled by 
$$	 C(\frac{1}{h_{\min}},\vert  b\vert_{H^{s+2}})\sqrt{\mu}\vert\Vde_s\vert_2^3.$$
We finally proved that:
 \begin{equation} A_3\leq c_3 (\vert \Ud\vert_{W^{1,\infty}}\vert,\vert \nablag b\vert_{H^{s+2}})\vert \Ud_s\vert_2^2(\vert\Ud_s\vert_2+\sqrt{\mu}\vert\nablag\cdot\Vde_s\vert_2)\label{c3:constant3}
\end{equation} where $c_3$ is a smooth positive non decreasing function of its argument and independent of $\delta$.\par\vspace{\baselineskip}

\textit{- Control of $A_4$}\qquad

Let us now control $A_4$ by using the Kato-Ponce estimate of Proposition \ref{c3:katoponce} and the Moser estimate of Proposition \ref{c3:moser}, using again that $s>d/2+1$: 
   \begin{align}
   ([SA_j(\Ud),\Lambda^s]\partial_j \Ud,\Ud_s)_2 &\leq  C(\vert \nablag  SA_j(\Ud)\vert_{H^{s-1}}\vert\partial_j \Ud\vert_{L^\infty}+\vert\nablag SA_j(\Ud)\vert_{L^\infty} \vert \partial_j \Ud\vert_{H^{s-1}})\vert \Ud_s\vert_2 \nonumber\\
   &\leq (C(\vert \Ud\vert_\infty)\vert \Ud\vert_{H^s} \vert \partial_j \Ud\vert_\infty\vert h_b\vert_{H^s}+C(\vert\Ud\vert_{W^{1,\infty}},\vert h_b\vert_{W^{1,\infty}}) \vert\Ud\vert_{H^s}) \vert \Ud_s\vert_2 \nonumber\\
   &\leq c_4(\vert \Ud\vert_{W^{1,\infty}},\vert h_b\vert_{H^s}) \vert \Ud_s\vert_2^2 .\label{c3:constant4}
   \end{align}

If one puts together \eqref{c3:constant1}, \eqref{c3:constant2}, \eqref{c3:constant3} and \eqref{c3:constant4}, one gets : 
\begin{align}
\frac{d}{dt}((1-\delta\Delta)SB (1-\delta\Delta)  \Ud_s,\Ud_s)_2 \leq c_5(\vert U^\delta\vert_{W^{1,\infty}},\vert h_b\vert_{H^{s+2}},\mu_{\max})\big(\vert U_s^\delta\vert_2^2 + \mu\vert\nablag\cdot \Vde_s\vert_2^2 +F(\vert \Ud_s\vert_2)\big)  \label{c3:inequality_precise}
\end{align}
with $c_5$ and $F$ some smooth non decreasing functions of their arguments, independent of $\delta$.


At this point, recalling the equivalence $\vert V\vert_{X^0}\sim (h_b(I+\mu\Tb)V,V)_2$ stated by Proposition \ref{c3:regu_inverse},  we proved that  $$\frac{d}{dt}( (1-\delta\Delta)SB (1-\delta\Delta)  \Ud_s,\Ud_s)_2 \leq F( (1-\delta\Delta)SB (1-\delta\Delta)  \Ud_s,\Ud_s)_2)$$ where $F$ is a Lipschitz function which does not depend on $\delta$. By Cauchy-Lipschitz theorem, there exists $T^*>0$ such that the Cauchy problem 
\begin{align*}
\begin{cases}
\frac{d}{dt}g(t) &= F(g(t)) \\
g(0) &= \vert U(0)\vert_{H^s}
\end{cases}
\end{align*}
admits a unique solution $g$ on a time interval $[0;T^*]$. By Gronwall's lemma, one has for all $t<T^*$ that 

$$( (1-\delta\Delta)SB (1-\delta\Delta)  \Ud_s,\Ud_s)_2 \leq g(t)$$ and consequently, using again the equivalence $\vert V\vert_{X^0}\sim (h_b(I+\mu\Tb)V,V)_2$ stated by Proposition \ref{c3:regu_inverse}:
\begin{equation}\forall 0\leq t\leq T^*,\qquad \vert (1-\delta\Delta)\Ud_s\vert_2^2+\mu\vert (1-\delta\Delta)\nablag\cdot\Vu^\delta_s\vert_2^2 \leq \frac{1}{C_1(h_{\min})} g(t).\label{c3:boussinesq_bound}\end{equation} This proves that the $H^s$ norm of $\Ud$ does not explode as $t$ goes to $T^*$, and then $T^\delta > T^*$, which give us a uniform time of existence for $\Ud$ independent of $\delta$.\par\vspace{\baselineskip}

We can be more precise for all $0<T<T^*$ if one chooses $$\lambda \geq  \underset{t\in[0;T]}{\sup} c_5(\vert U^\delta\vert_{W^{1,\infty}},\vert h_b\vert_{H^{s+2}},\mu_{\max})(t)$$ then one has the following inequality, using  estimate \eqref{c3:inequality_precise}:

\begin{equation*}
\frac{d}{dt}((1-\delta\Delta)SB (1-\delta\Delta) \Ud_s,\Ud_s)_2 \leq \lambda (\vert\Ud_s\vert_2^2 + \mu \vert\nablag\cdot \Vu^\delta_s\vert_2^2+F(\vert\Ud_s\vert_2))
\end{equation*}
and by integrating in time and using one last time the equivalence $\vert V\vert_{X^0}\sim  (h_b(I+\mu\Tb)V,V)_2$ stated by Proposition \ref{c3:regu_inverse}, one gets, for all $0\leq t\leq T$:
\begin{align*}
(\mu\vert \nablag\cdot (1-\delta\Delta)^{1/2}V^\delta_s\vert_2^2&+\vert(1-\delta\Delta)^{1/2}\Ud_s\vert_2^2)(t') \leq\frac{\lambda}{C_1(h_{\min})} \int_0^t   F(\vert\Ud_s\vert_2)(t')dt'\\
&+\frac{\lambda}{C_1(h_{\min})} \int_0^t (\vert(1-\delta\Delta)^{1/2}\Ud_s\vert_2^2+\mu\vert\nablag\cdot(1-\delta\Delta)^{1/2} \Vu^\delta_s\vert_2^2 )(t') dt'\\
&+\frac{C_2(\vert h_b\vert_{H^{t_0}})}{C_1(h_{\min})}\mathcal{E}(0)\end{align*} where we recall that the energy $\mathcal{E}$ is defined by \eqref{c3:energy_theorem}. One can conclude by Gronwall Lemma that:
\begin{align}(\mu\vert \nablag\cdot (1-\delta\Delta)^{1/2}V^\delta_s\vert_2^2+\vert(1-\delta\Delta)^{1/2}\Ud_s\vert_2^2)(t') & \leq\frac{\lambda}{C_1(h_{\min})} \int_0^t  e^{\frac{\lambda}{C_1(h_{\min})}(t-t')} F(\vert\Ud_s\vert_2)(t')dt' \nonumber \\&+ \frac{C_2(\vert h_b\vert_{H^{t_0}})}{C_1(h_{\min})}\mathcal{E}(0)e^{\frac{\lambda}{C_1(h_{\min})}t}. \label{c3:estimate_sol_delta}\end{align}

\textbf{Step 3-4}\qquad  The inequality \eqref{c3:boussinesq_bound} and the equation \eqref{c3:regularisee} prove that $ (\Ud)_\delta$ is bounded in the space\\$\ds L^\infty([0;T^*];H^s(\R^d))\cap W^{1,\infty}([0;T^*];H^{s-1})$. By compact embedding in $H^{s'}(\R^d)$ for all $s'<s$, one has the strong convergence of $ (\Ud)_\delta$ in $C([0;T^*];H^{s'}_{loc}(\R^d))$ to a function $U$. If one chooses $s'$ close enough to $s$, $H^{s'}(\R^d)$ is embedded in $C^1(\R^d)$ and one can pass to the limit in the non-linear terms of \eqref{c3:regularisee}. The linear terms do not raise any difficulty. It gives us a solution $U$ of the problem. A short analysis as in \cite{taylor3} Proposition XVI.1.4 shows that $U$ is in fact $C([0;T^*];H^s(\R^d))$. One can pass the limit $\delta$ goes to zero in the estimate \eqref{c3:estimate_sol_delta} and recovers the estimate \eqref{c3:estimate_sol} stated in the Theorem.$\qquad \Box $ 
\end{proof}

\section{Modified equation}\label{c3:section04}

As explained in the Introduction, the Boussinesq-Peregrine equation \eqref{c3:boussinesq_equation} does not have the proper structure to apply the technique used by Bresch-Métivier in \cite{bresch_metivier}. One time derivative of $\zeta$ is not equal to sum of terms of one space derivative  order of $\Vu$. \par\vspace{\baselineskip} 

In order to implement the technique used by \cite{bresch_metivier}, we modify a bit the equation without changing the consistency with the Water-Waves equation. More precisely, the Boussinesq-Peregrine equation is consistent at order $O(\mu^2)$ with the Water-Waves equation and therefore we look for a new equation consistent with the Boussinesq-Peregrine equation at a $O(\mu^2)$ order. In this new equation, one space derivative of $\zeta$ should have the "same order" as one time derivative of $\Vu$. For this purpose, we use the following formal consideration:
$$\partial_t \Vu = - (I+\mu\Tb)^{-1} (\epsilon\Vu\cdot\nablag\Vu+\nablag\zeta) = -\nablag\zeta +\mu R$$
where $R$ is of order $0$ or more in $\mu$ (recall that $\epsilon=O(\mu)$ in the Boussinesq-Peregrine regime). Therefore, one can add to the Boussinesq-Peregrine equation any expression of the form $\mu A\nablag\zeta$ and the corresponding term $\mu A\dt\Vu$, where $A$ is an operator independent on $\mu$, without changing the consistency: $$\mu A\partial_t\Vu = -\mu A\nablag\zeta + \mu^2R.$$ The operator $A$ should respect the following constraints:\begin{itemize}[label=--,itemsep=0pt]
\item the operator $h_bA$ should be symmetric, since multiplying the second equation by $h_b\Vu$ should give the time derivative of a positive quantity, such as $\vert\Vu\vert_2^2$ or $\vert\nablag\cdot\Vu\vert_2^2$;
\item the whole system must conserve a certain symmetry and be of the form: $B\dt U+\epsilon U\cdot\nablag U+LU=0$, where $U=(\zeta,\Vu)$, $B$ is symmetric and where $L$ is an anti-symmetric operator;
\item the operator $I+\mu A$ should be elliptic and of order at least two; therefore, one would have $$\nablag\zeta = -(I+\mu A)^{-1} ((I+\mu\Tb+\mu A)\dt \Vu + \epsilon R)$$ and a good control for $\dt\Vu$ would provide a good control for $\nablag\zeta$.\end{itemize}
 The two first constraints ensure the local existence for the new equation, while the third one ensures the large existence time. A short study shows that one should consider the following operator for $A$:
$$A\nablag W = -\nablag(\hbi\nablag\cdot(h_b W))$$ and the following symmetrizer for the equation:
\begin{equation}\mathcal{S} = \begin{pmatrix}
I-\mu\nablag\cdot(h_b\nablag\cdot) &0\\
0 &h_b
\end{pmatrix},\label{c3:symmetrizerS}\end{equation}
with an adapted change of unknown (see later) inspired by \cite{bresch_metivier}. However, with the consideration $\dt\Vu=-\nablag\zeta+O(\mu)$, one can for free make the operator $h_b(I+\mu\Tb+\mu A)$ elliptic by addition of the operator $\mu\nablago\nablago\cdot$, which provides a total control of a full derivative:
$$(h_b(I+\mu\Tb+A)\Vu,\Vu)_2 \sim \vert \Vu\vert_2^2+\mu\vert\nablag\cdot\Vu\vert_2^2+\mu\vert\nablago\cdot\Vu\vert_2^2,$$ where $$\nablago\cdot \Vu = (-\gamma\partial \Vu_x+\partial_x\Vu_y)$$ if $d=2$, and $\nablago\cdot = 0$ if $d=1$.  Remark that $\nablago\cdot\nablag\zeta = 0$, and since $d=1,2$, the operator $\nablago\cdot$ acts like the $\rm curl$ operator in dimension $3$.
\begin{remark}The operator $I+\mu\A=I-\mu \nablag(\hbi\nablag\cdot(h_b\cdot))$ is not elliptic, but it is invertible and its inverse gives precise control of the $H^1$ norm of the divergence, which is enough to control $\nabla\zeta$ in $H^1$ norm, since its curl is zero:$$\nablag\zeta=-(h_b(I+\mu \A))^{-1}(h_b(I+\mu\Tb+\mu\A-\nablag\nablago\cdot)\partial_t\Vu + \epsilon R).$$
\end{remark}

 We are therefore led to consider the following equation:
\begin{equation}
\left\{
\begin{aligned}
&\dt\zeta+\nablag\cdot(h\Vu)=0\\
&\big[I+\mu(\Tb-\nablag(\hbi\nablag\cdot(h_b\cdot))-\hbi\nablago\nablago\cdot)\big]\dt\Vu+\epsilon\Vu\cdot\nablag\Vu+(I-\mu\nablag\hbi\nablag\cdot(h_b\cdot))\nablag\zeta=0
\end{aligned}\right. \label{c3:boussinesq_modified_faux}
\end{equation}
However, the symmetrizer $\mathcal{S}$ defined by \eqref{c3:symmetrizerS} does not symmetrize properly the equation \eqref{c3:boussinesq_modified_faux}: there is a residual term in the first equation $\nablag\cdot(\epsilon\zeta\Vu)$ which is not canceled in the time derivative of the energy $(\mathcal{S}U,U)_2$ (with $U=(\zeta,\Vu)$). To overcome this problem, we use the following change of variable inspired by Brech-Métivier:
\begin{equation}
q=\frac{1}{\epsilon}\log(1+\frac{\epsilon\zeta}{h_b}). \label{c3:defq}
\end{equation}
The following Proposition is the key point of this change of variable, and states a precise relation between $q$ and $\zeta$:

\begin{proposition}Let $N>d/2+1$. Let also $h_b\in H^N(\R^d)$ be such that there exists $h_{\min}>0$ such that: $$\forall X\in\R^d,\qquad h_b(X) \geq h_{\min}.$$ Then, for $\epsilon$ small enough, the quantity $q$ defined by \eqref{c3:defq} is well defined. Moreover, one has: $$q=Q(\zeta)\zeta,$$ with $Q(\zeta)>0$. More precisely, for all $\alpha\in\mathbb{N}^d $,$\quad 1\leq \vert\alpha\vert\leq N$, one has $$\partial^\alpha q = Q_1(\zeta,h_b)\zeta^\alpha+P_\alpha(\zeta,h_b)$$ with $$Q_1(\zeta,h_b) = \int_0^t \frac{h_b}{h_b+\epsilon t\zeta }dt$$ and $$P_\alpha(\zeta,h_b) = \epsilon	 \sum_{\beta>0} Q_{\beta}(\zeta,h_b)+\sum_{0<\beta\leq\alpha} R_\beta(\zeta,h_b)\partial^\beta h_b$$ with $Q_\beta,R_\beta$ smooth functions of their arguments, and $$\vert P_\alpha(\zeta,h_b)\vert_{H^1} \leq C(h_{\min}, \vert h_b\vert_{H^N})\vert \partial^\alpha\zeta\vert_{2},$$ where $C$ is a smooth non decreasing function of its arguments.
\label{c3:qzeta}\end{proposition}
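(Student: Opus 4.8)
The plan is to reduce the whole statement to the elementary Taylor identity
\begin{equation*}
\log(1+x)=x\int_0^1\frac{dt}{1+tx}\qquad(x>-1),
\end{equation*}
combined with the Leibniz and Faà di Bruno formulas; no PDE is involved. I would first check that $q$ is well defined: since $N>d/2+1$, the embedding $H^N(\R^d)\hookrightarrow W^{1,\infty}(\R^d)$ gives $\vert\zeta\vert_\infty<\infty$, so that with $h_b\geq h_{\min}>0$ one has, for $\epsilon\leq\epsilon_0(h_{\min},\vert\zeta\vert_\infty)$ and all $t\in[0;1]$, the uniform lower bound $h_b+t\epsilon\zeta\geq h_{\min}/2>0$. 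Hence $1+\tfrac{\epsilon\zeta}{h_b}$ is bounded away from $0$, $q$ is a smooth composition of $\zeta$ and $h_b$, and $q\in H^N(\R^d)$ by the Moser composition estimate. This is the only smallness used, and nothing depends on $\mu$ (which does not enter $q$).

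Applying the Taylor identity with $x=\epsilon\zeta/h_b$ produces the key factorisation
\begin{equation*}
q=\frac1\epsilon\int_0^1\frac{\epsilon\zeta/h_b}{1+t\epsilon\zeta/h_b}\,dt=\zeta\int_0^1\frac{dt}{h_b+t\epsilon\zeta}=:\zeta\,Q(\zeta,h_b),
\end{equation*}
with $Q(\zeta,h_b)\geq(h_b+\epsilon\vert\zeta\vert_\infty)^{-1}>0$; this already gives $q=Q(\zeta)\zeta$ with $Q(\zeta)>0$. The structural point I would then exploit is that $Q$ depends on $\zeta$ only through the product $\epsilon\zeta$: every $\zeta$-differentiation of $Q$ brings out an extra factor $\epsilon$, i.e. $\partial_\zeta^j\partial_{h_b}^k Q=\epsilon^j\,\widetilde Q_{j,k}(\zeta,h_b)$ with $\widetilde Q_{j,k}$ smooth and bounded by $C(h_{\min})$ uniformly in $\epsilon$ and $\mu$.

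Next I would differentiate $q=\zeta\,Q(\zeta,h_b)$: for $1\leq\vert\alpha\vert\leq N$, Leibniz gives
\begin{equation*}
\partial^\alpha q=Q(\zeta,h_b)\,\partial^\alpha\zeta+\sum_{0<\beta\leq\alpha}\binom{\alpha}{\beta}\big(\partial^\beta Q(\zeta,h_b)\big)\,\partial^{\alpha-\beta}\zeta,
\end{equation*}
which identifies $Q_1:=Q(\zeta,h_b)>0$ as the top-order coefficient and $P_\alpha$ as the remaining sum. Expanding each $\partial^\beta Q(\zeta,h_b)$ by Faà di Bruno writes it as a finite sum of terms $(\partial_\zeta^j\partial_{h_b}^k Q)\prod_i\partial^{\gamma_i}\zeta\prod_\ell\partial^{\delta_\ell}h_b$ with $j+k\geq1$, all $\gamma_i,\delta_\ell\geq1$ and $\sum\gamma_i+\sum\delta_\ell=\beta$. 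If $k=0$ then $j\geq1$, and by the structural remark this term carries a factor $\epsilon^j$, contributing to the part $\epsilon\sum_{\beta>0}Q_\beta$ of $P_\alpha$; if $k\geq1$ it carries a factor $\partial^{\delta_\ell}h_b$ with $\vert\delta_\ell\vert\geq1$, contributing to the part $\sum_{0<\beta\leq\alpha}R_\beta\,\partial^\beta h_b$. This yields the announced form of $P_\alpha$. The Sobolev bound then follows from the tame product/Moser estimates: every factor appearing has order $\leq\vert\alpha\vert\leq N$, so the product of such factors with the bounded coefficients $\widetilde Q_{j,k}$ is controlled, in the relevant norm, by $C(h_{\min},\vert h_b\vert_{H^N})$ times the norm of the single highest-order factor, which one arranges to be the $\partial^\alpha\zeta$ factor, estimating the others in $L^\infty$ via $H^{N-1}\hookrightarrow L^\infty$.

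I expect the only genuine work to be the bookkeeping in the Faà di Bruno step: checking that in every remainder term a $\zeta$-derivative of maximal order $\vert\alpha\vert$ can occur \emph{only} together with a factor $\epsilon$ (this is exactly what makes the maximal-order contribution of $q$ to a Sobolev norm sit in $Q_1\,\partial^\alpha\zeta$ up to an $O(\epsilon)$ error), and that all $h_b$-derivative contributions cost no more than $\vert h_b\vert_{H^N}$. Uniformity in $\mu$ is automatic, and the only constraint is the smallness $\epsilon\leq\epsilon_0(h_{\min},\vert\zeta\vert_{H^N})$ keeping $h_b+t\epsilon\zeta$ away from zero.
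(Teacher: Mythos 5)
Your overall route is the same as the paper's: both rest on the identity $\log(1+x)=x\int_0^1\frac{dt}{1+tx}$, giving $q=Q(\zeta)\zeta$ with $Q=\int_0^1\frac{dt}{h_b+t\epsilon\zeta}>0$, followed by differentiation and the observation that $\zeta$-derivatives of the coefficient produce factors $\epsilon$ while $h_b$-derivatives produce factors $\partial^\beta h_b$; the paper's proof is exactly this, only terser (and its displayed formulas for $Q_1$ and for $\partial q$ contain evident typos). There is, however, one concrete defect in your decomposition. Taking $Q_1:=Q(\zeta,h_b)$ and putting the whole Leibniz sum $\sum_{0<\beta\leq\alpha}\binom{\alpha}{\beta}\partial^\beta Q\,\partial^{\alpha-\beta}\zeta$ into $P_\alpha$ leaves inside $P_\alpha$ the $\beta=\alpha$ contribution $-\epsilon\,\zeta\big(\int_0^1\frac{t\,dt}{(h_b+t\epsilon\zeta)^2}\big)\partial^\alpha\zeta$ (besides the $h_b$-derivative terms), i.e.\ a term carrying the maximal-order derivative $\partial^\alpha\zeta$. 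Consequently $\vert P_\alpha\vert_{H^1}$ involves $\vert\partial^\alpha\zeta\vert_{H^1}$, that is $\vert\alpha\vert+1$ derivatives of $\zeta$, and the asserted estimate $\vert P_\alpha\vert_{H^1}\leq C(h_{\min},\vert h_b\vert_{H^N})\vert\partial^\alpha\zeta\vert_2$ cannot hold for your splitting: the prefactor $\epsilon$ does not buy back a derivative (the statement offers no $\epsilon$--$\mu$ trade, and in the applications $\zeta$ only lies in $H^N$, with merely $\sqrt{\mu}\nablag\zeta\in H^N$). Your closing remark that maximal-order $\zeta$-derivatives occur ``only together with a factor $\epsilon$'' is true for your decomposition but is not sufficient for the claimed bound, and the bound is what is actually used in the energy estimates of Theorem \ref{c3:existence_modified_boussinesq}.

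The fix is to take as leading coefficient the full chain-rule coefficient $Q_1=\frac{1}{h_b+\epsilon\zeta}=\int_0^1\frac{h_b\,dt}{(h_b+t\epsilon\zeta)^2}$, which (up to typos) is what the paper's formula for $Q_1$ is aiming at; it differs from your $Q$ by an $O(\epsilon)$ amount, so positivity and the lower bound are unaffected. Equivalently, expand $\partial^\alpha q$ by Fa\`a di Bruno applied directly to $q=\frac{1}{\epsilon}\log(1+\frac{\epsilon\zeta}{h_b})$ rather than by Leibniz on $\zeta Q$: then the only term containing $\partial^\alpha\zeta$ is $\frac{1}{h_b+\epsilon\zeta}\partial^\alpha\zeta$, and every remainder term involves $\zeta$-derivatives of order at most $\vert\alpha\vert-1$, each accompanied either by a factor $\epsilon$ (from $\partial_\zeta^j$ with $j\geq 2$, since the dependence on $\zeta$ is through $\epsilon\zeta$) or by a factor $\partial^\beta h_b$. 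With this single modification, your bookkeeping and the Moser/product estimates deliver the stated structure of $P_\alpha$ together with the $H^1$ bound, and the argument coincides with the paper's.
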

\begin{remark} The Proposition \ref{c3:qzeta} states that at the leading order, a derivative of $q$ is equal to a derivative of $\zeta$ up to a positive factor. Moreover, if one differentiates $q$ only with respect to time, since $\partial_t h_b = 0$ this equality is true up to an $\epsilon$ factor.\label{c3:remarkq}
\end{remark}
\begin{proof}
 Just notice that the definition of $q$ \eqref{c3:defq} implies \begin{equation}q=Q(\zeta)\zeta,\label{c3:qQ}\end{equation} with $$Q(\zeta) = \int_0^1 \frac{1}{h_b+\epsilon t\zeta}dt \zeta.$$ For a given $\zeta(0)$, the quantity $q$ is well-defined if there exists $h_{\min}>0$ such that the following condition is satisfied: $$\forall X\in\R^d,\qquad h_0(X) > h_{\min}.$$ Indeed,  for $\epsilon$ small enough one has  $h_b+\epsilon t\zeta (0)>0$ and this condition stay satisfied for $t$ small enough. Moreover, one has  $Q(\zeta)>0$. \par\vspace{\baselineskip} We now differentiate one time \eqref{c3:qQ} (the notation $\partial$ stands for any derivative of order one):
\begin{equation}
\partial q = \int_0^t \frac{h_b}{h_b+t\epsilon\zeta}dt \partial q
\label{c3:inter6}
\end{equation} which gives the expression of $Q_1(\zeta,h_b)$ given by the Proposition. The end of the proof is done by differentiating \eqref{c3:inter6}.
 
   \qquad$\Box$\end{proof}

\subsection{Local existence for the modified Boussinesq-Peregrine equation}\label{c3:section02}
We prove in this Section a local existence result for the modified Boussinesq-Peregrine equation \eqref{c3:boussinesq_modified_faux} introduced previously. We recall that we consider the change of unknown \begin{equation}q=\frac{1}{\epsilon}\log(1+\frac{\epsilon\zeta}{h_b})\label{c3:defq1}.\end{equation}
 Under the change of unknown \eqref{c3:defq1}, the equation \eqref{c3:boussinesq_modified_faux} takes the form:
\begin{equation}\left\{
\begin{aligned}
h_b\big(\dt Q(\zeta)\zeta+\epsilon\Vu\cdot\nablag(Q(\zeta)\zeta)\big)+\nablag\cdot(h_b\Vu)=0\\
h_b(I+\mu(\Tb-\nablag(\hbi\nablag\cdot(h_b\cdot))-\hbi\nablago\nablago\cdot))\dt\Vu&+h_b\epsilon\Vu\cdot\nablag\Vu\\&+h_b(I-\mu\nablag\hbi\nablag\cdot(h_b\cdot))\nablag\zeta=0.
\end{aligned}\right.\label{c3:modified_boussinesq}
\end{equation}
For the sake of clarity, we will use the following notations:
\begin{equation}
\Bi = (I+\mu\Tb-\mu\nablag(\hbi\nablag\cdot(h_b\cdot))-\mu\hbi\nablago\nablago\cdot),\quad \A = (I-\mu\nablag\hbi\nablag\cdot(h_b\cdot)),\qquad q= Q(\zeta)\zeta.\label{c3:defoperateurz}
\end{equation}
For all $N\in\mathbb{N}$, we define the following space:
\begin{equation}\mathcal{E}^N=\lbrace (\Vu,\zeta)\in H^N(\R^d)^d\times H^N(\R^d)\mid E^N(\Vu,\zeta)<\infty\rbrace\label{c3:energixspace}\end{equation} where
\begin{equation}
E^N(\zeta,\Vu) = \vert \zeta\vert_{H^N}+\sqrt{\mu}\vert\nablag\zeta\vert_{H^N}+\vert \Vu\vert_{H^N}+\sqrt{\mu}\vert \nablag \Vu\vert_{H^N}.\label{c3:energix}
\end{equation}
We denoted $\nablag\Vu$ the differential of $\Vu$. The space $\mathcal{E}^N$ endowed with the norm $E^N$ is a Banach space. We prove in this Section the following local existence result:
\begin{theorem}
Let $N\in\mathbb{N}$ be such that $N>d/2+2$. Let $h_b\in H^{N+1}(\R^d)$ be such that there exists $h_{\min}>0$ such that $$\forall X\in\R^d, h_b(X) \geq h_{\min}.$$ Let $(\zeta_0,\Vu_0)\in \mathcal{E}^N$. Then, there exists $T^*>0$ and a unique solution $(\Vu,\zeta)$ in $C([0;T^*[;\mathcal{E}^N)$ to the equation \eqref{c3:modified_boussinesq}. Moreover, one has:
\begin{equation}
\forall T<T^*,\qquad \forall \lambda \geq \underset{t\in[0;T]}{\sup} C(\hbi,\vert h_b\vert_{H^{N+1}},\vert \zeta,\Vu\vert_\Wdf),\qquad  E^N(t) \leq E^N(0) e^{\lambda t}. \label{c3:estimation_energie_modified}
\end{equation} \label{c3:existence_modified_boussinesq}
\end{theorem}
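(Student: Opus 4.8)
The plan is to run the four-step scheme of the proof of Theorem~\ref{c3:boussi_theorem} — mollification, $\delta$-uniform energy estimate, passage to the limit, recovery of regularity — now built around the operators $\Bi,\A$ of \eqref{c3:defoperateurz}, the symmetrizer $\mathcal S$ of \eqref{c3:symmetrizerS} and the unknown $q=Q(\zeta)\zeta$. Dividing both lines of \eqref{c3:modified_boussinesq} by $h_b$, the system has the quasilinear form $B\dt U+\epsilon\,\mathcal C(U)U+\mathcal L U=0$ with $U=(q,\Vu)$, $B=\mathrm{diag}(1,\Bi)$ and $\mathcal L$ the order-one linear part built from $\hbi\nablag\cdot(h_b\,\cdot)$ and $\A\nablag$; the role of $\mathcal S$ is that $\mathcal S B=\mathrm{diag}\big(I-\mu\nablag\cdot(h_b\nablag\cdot)\,,\,h_b\Bi\big)$ is symmetric positive, with
\[
\big(\mathcal S B\,(q,\Vu),(q,\Vu)\big)_2\sim \vert q\vert_2^2+\mu\vert\nablag q\vert_2^2+\vert\Vu\vert_2^2+\mu\vert\nablag\Vu\vert_2^2 ,
\]
the control of $\mu\vert\nablag\Vu\vert_2^2$ in $d=1,2$ coming — as in the div--curl lemma — from the extra term $\mu\hbi\nablago\nablago\cdot$, which makes $h_b\Bi$ genuinely elliptic of order two. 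This is precisely the structural feature missing in Theorem~\ref{c3:boussi_theorem} (where $(h_b(I+\mu\Tb))^{-1}$ only controlled the divergence), and, together with Proposition~\ref{c3:qzeta} to pass between $q$ and $\zeta$, it yields an energy $\mathbf E^N(U)=\sum_{\vert\alpha\vert\le N}\big((I-\mu\nablag\cdot(h_b\nablag\cdot))\partial^\alpha q,\partial^\alpha q\big)_2+\big(h_b\Bi\,\partial^\alpha\Vu,\partial^\alpha\Vu\big)_2$ (up to a lower-order correction, see below) with $\mathbf E^N\sim (E^N)^2$.

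\textbf{Steps 1--2.} For $\delta>0$ I would solve the regularized problem $(1-\delta\Delta)\mathcal S B(1-\delta\Delta)\dt U^\delta+\mathcal S\big(\epsilon\mathcal C(U^\delta)+\mathcal L\big)U^\delta=0$, $U^\delta(0)=(1-\delta\Delta)^{-1}U_0$, by Cauchy--Lipschitz in $H^N(\R^d)^{d+1}$; the needed ingredients are the analogue of Proposition~\ref{c3:regu_inverse} for $I-\mu\nablag\cdot(h_b\nablag\cdot)$ and $h_b\Bi$ — both symmetric and elliptic of order two, so that Lax--Milgram and elliptic regularity give their inverses bounded $H^s\to H^s$ with $\mu$-uniform estimates of the type $\sqrt\mu\,\vert\nablag(h_b\Bi)^{-1}u\vert_{H^s}\lesssim\vert u\vert_{H^s}$ — together with the invertibility of $\zeta\mapsto q$ for $\epsilon$ small, which is Proposition~\ref{c3:qzeta}; the $H^N$ explosion criterion then comes for free. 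I then apply each $\partial^\alpha$, $\vert\alpha\vert\le N$, set $U^\delta_\alpha=\partial^\alpha U^\delta$, and differentiate $\tfrac12\big((1-\delta\Delta)\mathcal S B(1-\delta\Delta)U^\delta_\alpha,U^\delta_\alpha\big)_2$ in time, using the equation to replace the time derivative.

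\textbf{The energy estimate (the crux).} The principal terms are controlled because $\mathcal S\mathcal L$ is antisymmetric modulo lower order: the $O(\mu)$ part of $(I-\mu\nablag\cdot(h_b\nablag\cdot))\hbi\nablag\cdot(h_b\cdot)$ is exactly minus the adjoint of the $O(\mu)$ part of $h_b\A\nablag$ — this is why $\A$ was chosen in this form — so the top-order dispersive contributions cancel exactly between the two equations. The surviving discrepancy is the non-dispersive, $O(\beta)$ first-order mismatch between $\A\nablag\zeta$ and $\nablag q$, which, by Proposition~\ref{c3:qzeta} ($\partial^\alpha q=Q_1\partial^\alpha\zeta+P_\alpha$ with $Q_1>0$ and $\vert P_\alpha\vert_{H^1}\lesssim\vert\partial^\alpha\zeta\vert_2$) and after using the first equation to trade $\nablag\cdot(h_b\Vu)$ for $\dt q$, becomes a perfect time derivative up to $O(\epsilon)$ and lower-order terms; this is absorbed into the definition of $\mathbf E^N$, exactly the Bresch--Métivier device of \cite{bresch_metivier}. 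The remaining contributions — the symmetric transport terms $\epsilon(\mathcal S\tilde{\mathcal C}(U^\delta)\partial_jU^\delta_\alpha,U^\delta_\alpha)_2$ (integrated by parts as in \eqref{c3:inter1}) and the commutators $[\mathcal S B,\partial^\alpha]$, $[\mathcal S\mathcal C,\partial^\alpha]$ — are estimated uniformly in $\delta,\mu$ by the Kato--Ponce inequality (Proposition~\ref{c3:katoponce}), the Moser estimate (Proposition~\ref{c3:moser}) and the inverse bounds above, exactly as $A_3,A_4$ in the proof of Theorem~\ref{c3:boussi_theorem}; each dangerous power of $\sqrt\mu$ is paired with a $\sqrt\mu\,\nablag$ that $E^N$ controls, which keeps everything $\mu$-uniform. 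One obtains $\frac{d}{dt}\mathbf E^N\le C\big(\hbi,\vert h_b\vert_{H^{N+1}},\vert(\zeta^\delta,\Vu^\delta)\vert_{\Wdf}\big)\mathbf E^N$, hence by Gronwall and $\mathbf E^N\sim(E^N)^2$ the estimate \eqref{c3:estimation_energie_modified} and an existence time $T^*>0$ independent of $\delta$.

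\textbf{Steps 3--4 and main obstacle.} From the $\delta$-uniform bound, $(U^\delta)_\delta$ is bounded in $L^\infty([0;T^*];\mathcal E^N)\cap W^{1,\infty}([0;T^*];\mathcal E^{N-2})$, hence (Aubin--Lions) converges strongly in $C([0;T^*];\mathcal E^{N'}_{\mathrm{loc}})$ for $N'<N$; since $N>d/2+2$ one takes $N'$ large enough that $\mathcal E^{N'}\hookrightarrow C^1$ and passes to the limit in the nonlinear terms, a regularization argument as in \cite{taylor3} (Proposition~XVI.1.4) upgrades the limit to $C([0;T^*[;\mathcal E^N)$, and passing to the limit in the energy inequality gives \eqref{c3:estimation_energie_modified}; uniqueness and continuous dependence follow from an $L^2$ estimate on the difference of two solutions, using again the coercivity of $\mathcal S B$. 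I expect the main obstacle to be the energy estimate, namely showing that $\mathcal S$ together with the change of unknown $q$ symmetrizes \eqref{c3:modified_boussinesq} up to terms that are of order zero, of size $\epsilon$, or perfect time derivatives — this is the entire reason the equation was modified from \eqref{c3:boussinesq_equation} and the variable changed, and Proposition~\ref{c3:qzeta} is the precise tool turning the $q/\zeta$ and $h/h_b$ discrepancies into harmless ones; the persistent secondary difficulty is tracking the $\mu$-dependence everywhere so that $T^*$ is independent of $\mu$, as needed for the consistency of \eqref{c3:modified_boussinesq} with the Water-Waves equations.
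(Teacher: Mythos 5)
Your overall scheme (mollify, apply $\partial^\alpha$, symmetrize, Gronwall, Aubin--Lions, recover regularity) and your toolbox (Proposition \ref{c3:qzeta}, the coercivity and inverse bounds for $h_b\Bi$ as in Proposition \ref{c3:regu_bi}, Kato--Ponce, Moser) are the paper's. The deviation, and the gap, is in how you organize the crux energy estimate. You build the energy on $q$ and pair the first equation with $(I-\mu\nablag\cdot(h_b\nablag\cdot))\partial^\alpha q$, while the second equation genuinely contains $h_b\A\nablag\partial^\alpha\zeta$, not $\nablag\partial^\alpha q$. With this pairing the principal couplings do \emph{not} cancel: besides the non-dispersive mismatch $(\nablag\cdot(h_b\partial^\alpha\Vu),(\hbi Q_1-1)\partial^\alpha\zeta+\hbi P_\alpha)_2$, which you correctly identify and propose to trade for a time derivative via the equation, there survives the dispersive mismatch $\mu\big(\hbi\nablag\cdot(h_b\partial^\alpha\Vu),\nablag\cdot(h_b\nablag(\partial^\alpha\zeta-\partial^\alpha q))\big)_2$, whereas you assert that ``the top-order dispersive contributions cancel exactly''. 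They do not: $\partial^\alpha\zeta-\partial^\alpha q=(1-Q_1)\partial^\alpha\zeta-P_\alpha$ with $1-Q_1=O(\epsilon)$, so this term pairs $N+2$ derivatives of $\zeta$ with $N+1$ derivatives of $\Vu$ under a $\mu\epsilon$ prefactor only; $E^N$ controls just $\sqrt\mu$ times $N+1$ derivatives, and Theorem \ref{c3:existence_modified_boussinesq} assumes no relation between $\epsilon$ and $\mu$ (unlike Theorem \ref{c3:boussi_theorem}), so the $\epsilon$ cannot be used to absorb it. As written, your energy inequality does not close.

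The gap is repairable in two ways. Either you apply to the dispersive mismatch the same device you use for the non-dispersive one — replace $\hbi\nablag\cdot(h_b\partial^\alpha\Vu)$ by $-\dt\partial^\alpha q-\epsilon\Vu\cdot\nablag\partial^\alpha q+R_1^\alpha$, turn the result into $\tfrac{\mu}{2}\dt\big(h_bQ_1(1-Q_1)\nablag\partial^\alpha\zeta,\nablag\partial^\alpha\zeta\big)_2$ plus controlled terms, and absorb it into a corrected energy, keeping track of the mollifier commutators this substitution creates — or, as the paper does, you avoid the issue altogether by pairing the first equation of \eqref{c3:quasilinear_boussi} with $h_b\partial^\alpha\zeta-\mu\nablag\cdot(h_b\nablag\partial^\alpha\zeta)$ rather than with a $q$-based multiplier: then \emph{all} the order-one couplings cancel identically (the terms $(V)$ of \eqref{c3:vanishing_terms} vanish), and the whole $q$-versus-$\zeta$ discrepancy is pushed into the time terms $(T)$, where Proposition \ref{c3:qzeta} together with a substitution of $\dt\partial^\alpha\zeta$ from the equation yields the perfect derivative $\tfrac12\dt\big(Q_1(I-\delta\Delta)\partial^\alpha\zeta,h_b(I-\delta\Delta)\partial^\alpha\zeta\big)_2$ up to remainders bounded by $C(\hbi,\vert h_b\vert_{H^{N+1}},\vert(\zeta,\Vu)\vert_{\Wdf})E^N$. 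It is worth noting that the corrected quadratic form your Bresch--M\'etivier device would produce, namely $(Q_1h_b\partial^\alpha\zeta,\partial^\alpha\zeta)_2$ at leading order, is exactly the paper's energy, so the two routes coincide in the end; the remaining steps of your proposal (residuals via Kato--Ponce and Moser, the bounds of Proposition \ref{c3:regu_bi} for $(h_b\Bi)^{-1}$, Gronwall, compactness and uniqueness) match the paper's Steps 1--4.
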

Even if at first sight the proof seems to follow the lines of the proof of local existence for a standard quasilinear hyperbolic system, as done for the Boussinesq-Peregrine equation in Section \ref{c3:section01}, we give here a detailed proof. Indeed, the energy \eqref{c3:energix} is defined in terms of $\zeta,\Vu$, while the equation \eqref{c3:modified_boussinesq} is expressed in terms of unknowns $\zeta, q,\Vu$, and the dependence of $q$ with respect to $\zeta$ is not trivial. This leads to technical complications that must be handled carefully.
\begin{remark}
In Theorem \ref{c3:boussi_theorem} which states the local existence for the initial Boussinesq-Peregrine equation, we used fractional order Sobolev spaces to define the energy of solutions, while we use here integer order Sobolev spaces. The reason is to have a coherent notation with the long time existence Theorem of Section \ref{c3:section03}, which can only be proved with an integer number of space derivatives, due to the method used. 
\end{remark}
\begin{proof}
As usual, the local existence follows the steps used for the quasilinear hyperbolic systems:\begin{itemize}[label=--,itemsep=0pt]
\item Step 1 : We solve a smoothed equation involving a mollifier $(1-\delta\Delta)$.
\item Step 2 : We prove that the existence time of the solution of the mollified equation does not depend on $\delta$, and the uniform bound in $H^s$ norm of this solution.
\item Step 3 : We pass to the limit $\delta$ goes to zero in the mollified equation to get a solution of the equation \eqref{c3:modified_boussinesq}.
\item Step 4 : We recover regularity for the solution.\end{itemize}

\textbf{Step 1}\qquad  One sets $\delta\in\R$ be such that $0<\delta<1$, and considers the following equation:
\begin{equation}
\left\{
\begin{aligned}
&(I-\delta\Delta)^2 \dt q^\delta+\epsilon\Vu\cdot\nablag q^\delta+\hbi\nablag\cdot(h_b\Vu)=0\\
&(I-\delta\Delta)h_b\Bi (I-\delta\Delta)\dt\Vu+\epsilon h_b\Vu\cdot\nablag\Vu+h_b\A\nablag\zetade=0.\label{c3:mollified_boussinesq}
\end{aligned}\right.\end{equation}
We first solve \eqref{c3:mollified_boussinesq} in the Banach space $\mathcal{E}^N$ defined by \eqref{c3:energixspace}. To this purpose, we note that the linear applications $(I+\delta\Delta)^{-1}$ and  $(I+\delta\Delta)^{-1}(h_b\Bi)^{-1}(I+\delta\Delta)^{-1}$ are respectively continuous from $H^{N-1}$ to $H^N$, and from $H^{N-3}$ to $H^N$, using the following Proposition:
\begin{proposition}\label{c3:regu_bi}
Let $N\geq 0$,  $t_0>d/2$, and $\beta\leq 1$. Let $b\in H^{N+1}(\R^d)$ be such that there exists $h_{\min}>0$ such that $h_b=1-\beta b\geq h_{\min}$. The operator $$h_b\Bi : H^{N+2}(\R^d)^d\longrightarrow H^N(\R^d)^d$$ is one-to-one and onto. One has, for all $V\in H^1(\R^d)^d$:
$$C_1(h_{\min})\vert V\vert_{H^1}^2\leq (h_b\Bi V,V)_2  \leq C_2(\vert h_b\vert_{H^{t_0+1}})\vert V\vert_{H^1}^2,$$ where the $C_i$ are non decreasing functions of their arguments.
 Moreover, one has, if $b\in H^{1+N\vee t_0}(\R^d)$ and for all $ f\in H^{N}(\R^d)^d$:
$$ \vert (h_b\Bi)^{-1} f\vert_{H^{N}}+\sqrt{\mu} \vert (h_b\Bi)^{-1}f\vert_{H^{N+1}} +\mu\vert(h_b\Bi)^{-1}f\vert_{H^{N+2}} \leq C(\hbi,\vert h_b\vert_{H^{1+N\vee t_0}})\vert f\vert_{H^N}$$ where $C$ is a non decreasing continuous function of its arguments.
\end{proposition}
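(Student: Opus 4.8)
The plan is to adapt the arguments behind Proposition \ref{c3:regu_inverse}, the new feature being that the terms $-\mu\hbi\nablago\nablago\cdot$ and $-\mu\nablag(\hbi\nablag\cdot(h_b\cdot))$ make $h_b\Bi$ a genuinely elliptic operator of order two, whose inverse therefore gains two full derivatives rather than only the divergence. First I would prove the coercivity: writing $h_b\Bi=h_b(I+\mu\Tb)-\mu h_b\nablag(\hbi\nablag\cdot(h_b\,\cdot))-\mu\nablago\nablago\cdot$ and integrating by parts in the last two terms, one finds for $V\in H^1(\R^d)^d$
\[
(h_b\Bi V,V)_2=(h_b(I+\mu\Tb)V,V)_2+\mu(\hbi\nablag\cdot(h_bV),\nablag\cdot(h_bV))_2+\mu\vert\nablago\cdot V\vert_2^2,
\]
a symmetric expression whose three terms are nonnegative; in particular $h_b\Bi$ is symmetric. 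The first term is equivalent to $\vert V\vert_2^2+\mu\vert\nablag\cdot V\vert_2^2$ by Proposition \ref{c3:regu_inverse}, and by Plancherel the orthogonality of $\xi^\gamma$ and $(\xi^\gamma)^\perp$ gives $\vert\nablag\cdot V\vert_2^2+\vert\nablago\cdot V\vert_2^2=\vert\nablag V\vert_2^2$ (trivially in $d=1$, where $\nablago\cdot=0$); combining these with $h_b\ge h_{\min}$ and $\nablag\cdot(h_bV)=h_b\nablag\cdot V+(\nablag h_b)\cdot V$ yields $(h_b\Bi V,V)_2\sim\vert V\vert_2^2+\mu\vert\nablag V\vert_2^2$ with constants depending only on $h_{\min}$ and $\vert h_b\vert_{H^{t_0+1}}$, which is the stated coercivity. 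In particular $h_b\Bi$ is injective on $H^{N+2}\subset H^1$.

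For surjectivity, the bilinear form $(V,W)\mapsto(h_b\Bi V,W)_2$ is coercive on $H^1(\R^d)^d$ by the above and bounded — after the integration by parts it only pairs first derivatives of $V$ and $W$ against coefficients built from $h_b,\hbi,\nablag b$, which lie in $W^{1,\infty}$ since $H^{t_0+1}\hookrightarrow W^{1,\infty}$ — so Lax-Milgram produces, for each $f\in H^N\hookrightarrow L^2\hookrightarrow H^{-1}$, a unique $V\in H^1$ solving $h_b\Bi V=f$ distributionally. To bootstrap the regularity I would observe that the second order symbol of $h_b\Bi$ is $\mu\big[(\frac13 h_b^3+h_b)\,\xi^\gamma\otimes\xi^\gamma+(\xi^\gamma)^\perp\otimes(\xi^\gamma)^\perp\big]$, which is positive definite for $\xi\neq0$ (scalar and positive when $d=1$); hence $h_b\Bi$ is a uniformly elliptic second order system with coefficients in $H^{N+1}$, and from $f\in H^N$ elliptic regularity gives $V\in H^{N+2}$. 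This also shows $h_b\Bi:H^{N+2}\to H^N$ is well defined, hence onto.

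For the quantitative bound, set $V=(h_b\Bi)^{-1}f$. Pairing $h_b\Bi V=f$ with $V$ and using the coercivity gives $\vert V\vert_2+\sqrt\mu\vert\nablag V\vert_2\le C(h_{\min})\vert f\vert_2$. Applying $\Lambda^s$ to the equation for $1\le s\le N$, writing $h_b\Bi\Lambda^sV=\Lambda^sf+[h_b\Bi,\Lambda^s]V$, pairing with $\Lambda^sV$ and using the coercivity on the left, the commutator is controlled by the Kato-Ponce estimate of Proposition \ref{c3:katoponce} and the Moser estimates of Proposition \ref{c3:moser} after integrating its divergence-type contributions by parts onto $\Lambda^sV$ and absorbing them into the $\mu\vert\nablag V\vert_{H^s}^2$ part of the energy, exactly as in the treatment of the term $A_3$ in the proof of Theorem \ref{c3:boussi_theorem}; since $\Tb$ carries second order terms this commutator is of order $s+1$ and needs $\nablag h_b\in H^s$, i.e. $b\in H^{s+1}$, which accounts for the hypothesis $b\in H^{1+N\vee t_0}$. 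A finite induction on $s$ then gives $\vert V\vert_{H^N}+\sqrt\mu\vert V\vert_{H^{N+1}}\le C(\hbi,\vert h_b\vert_{H^{1+N\vee t_0}})\vert f\vert_{H^N}$. Finally, to recover the top order term with the correct power of $\mu$, I would solve the equation for its second order elliptic part $\mathcal L_2$ (so that $h_b\Bi=h_bI+\mu\mathcal L_2+\mu\mathcal L_{\le1}$ with $\mathcal L_{\le1}$ of order at most one and $\mu$-independent coefficients), giving $\mu\mathcal L_2V=f-h_bV-\mu\mathcal L_{\le1}V$ whose right-hand side is bounded in $H^N$ by $C(\hbi,\vert h_b\vert_{H^{N+1}})\vert f\vert_{H^N}$ using the previous bounds and $\mu\le1$; since $\mathcal L_2$ is elliptic with ellipticity constant depending only on $h_{\min}$, elliptic regularity gives $\mu\vert V\vert_{H^{N+2}}\le C\vert\mu\mathcal L_2V\vert_{H^N}+C\vert V\vert_{H^N}\le C(\hbi,\vert h_b\vert_{H^{N+1}})\vert f\vert_{H^N}$.

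The main obstacle is precisely this last step: one must carefully split off the second order elliptic part of $h_b\Bi$ from its lower order parts, note that its ellipticity constant depends on $h_{\min}$ but not on $\mu$ — which is exactly why these terms were built into the modified equation — and feed the lower order bounds back in while keeping all constants uniform in $\mu$. The secondary difficulty is the commutator bookkeeping at level $N$: one has to keep track of which parts of $h_b\Bi$ are of order one and which of order two, since that is what determines the $H^{1+N\vee t_0}$ regularity demanded of $b$.
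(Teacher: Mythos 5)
Your proposal is correct and follows essentially the route the paper intends: the paper only sketches this proof as an adaptation of Proposition \ref{c3:regu_inverse} (symmetric coercive quadratic form, Lax--Milgram, then commutator/induction estimates, exactly as carried out for $h_b\A$ in Appendix \ref{c3:appendix_operator}), with the $-\mu\hbi\nablago\nablago\cdot$ term supplying the full first-derivative control, which is precisely the structure of your argument, your appeal to elliptic regularity for the second-order part being the natural way to read the $\mu\vert(h_b\Bi)^{-1}f\vert_{H^{N+2}}$ bound off the equation. One minor remark: the coercivity you actually establish is the $\mu$-weighted equivalence $(h_b\Bi V,V)_2\sim\vert V\vert_2^2+\mu\vert\nablag V\vert_2^2$, which is what the paper uses later, and the stated unweighted $H^1$ lower bound should be understood in that sense (it cannot hold uniformly as $\mu\to 0$).
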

The proof is an easy adaptation of the proof of the invertibility of the operator $I+\mu\Tb$ stated by Proposition \ref{c3:regu_inverse} (see \cite{david} Chapter 5 for a full proof). Just note that if $W\in L^2(\R^d)^d$, $(I+\mu\Tb)^{-1}W$ is only controlled with its divergence in $L^2(\R^d)$ norm, while $(h_b\Bi)^{-1}W$ is controlled in a full $H^1$ norm, due to the presence of the orthogonal gradient $\nablago$ in the operator $\Bi$ (see definition \eqref{c3:defoperateurz}). 

 Therefore, using Cauchy-Lipschitz theorem, there exists $T^\delta >0$ and a unique solution $(\zetadede,\Vde)\in C([0;T^\delta[,E^N)$  to the equation \eqref{c3:mollified_boussinesq} (just replace $\zetade $ by $ \frac{h_b}{\epsilon}(e^{\epsilon q^\delta}-1)$ in the second equation, to have an ODE in terms of the unknowns $(q^\delta,\Vde)$).  Moreover, $T^\delta <\infty$ if and only if \begin{equation*}
\underset{t\rightarrow T^\delta}{\lim} \vert (\zetadede,\Vde)(t)\vert_{E^N} = +\infty.
\end{equation*}

\textbf{Step 2}\qquad We now want to bound uniformly with respect to $\delta$ the energy $E^N$ defined by \eqref{c3:energix} of the unknowns.   We use the following notation: for all $\alpha\in\mathbb{N}^d, \vert\alpha\vert \leq N$, for all distribution $f$,
\begin{equation}
f_{(\alpha)}  = \partial^\alpha f. \label{c3:deffa}
\end{equation}
One differentiates the equation \eqref{c3:mollified_boussinesq} to find the following equation in terms of the unknowns $\zetadea,\Vda$ (recall the notation $\eqref{c3:deffa}$):
\begin{equation}
\left\{\begin{aligned}
&(I-\delta\Delta)^2\dt\qa +\epsilon\Vde\cdot\nablag\qa+\hbi\nablag\cdot(h_b\Vda)=R_1^\alpha \\
&(I-\delta\Delta)h_b\Bi(I-\delta\Delta)\dt\Vda + \epsilon h_b\Vde\cdot\nablag\Vda+h_b\A\nablag\zetadea=R_2^\alpha
\end{aligned}\right.\label{c3:quasilinear_boussi}
\end{equation}
where
\begin{equation*}
R_1^\alpha = -\epsilon\sum_{0<\beta\leq \alpha} \partial^\beta \Vde\cdot\nablag\partial^{\alpha-\beta}q^\delta -\sum_{0<\beta+\nu\leq \alpha}\partial^\beta( \hbi)\nablag\cdot(\partial^\nu (h_b)\partial^{\alpha-\beta-\nu}\Vde)
\end{equation*}
and
\begin{equation*}
R_2^\alpha = (I-\delta\Delta)[h_b\Bi,\partial^\alpha](I-\delta\Delta)\dt\Vde+\epsilon[h_b\Vde\cdot\nablag,\partial^\alpha]\Vde+[h_b\A,\partial^\alpha]\nablag\zetade.
\end{equation*}
As explained at the beginning of this Section, the system \eqref{c3:quasilinear_boussi} can be made symmetric by multiplying it by the following operator:
\begin{equation*}\begin{pmatrix}
h_b-\mu\nablag(h_b\nablag) &0\\
	0 &I\end{pmatrix}.
\end{equation*}
Note that according to Proposition \ref{c3:regu_bi}, one has $$\vert\zetade\vert_{H^N}^2+\mu\vert\nablag\zetade\vert_{H^N}^2+ (h_b\Bi V,V)_{H^N}\sim E^N(\zetade,\Vu)$$ and thus it is equivalent to control the $E^N$ norm of the unknown and the quantity $\vert\zetade\vert_{H^N}^2+\mu\vert\nablag\zetade\vert_{H^N}^2+ (h_b\Bi V,V)_{H^N}$. Following these considerations, one takes the $L^2$ scalar product of the first equation of \eqref{c3:quasilinear_boussi} with $(h_b-\mu\nablag\cdot(h_b\nablag\cdot))\zetadea$ and the scalar product of the second equation with $\Vda$. We obtain the following equality:
\begin{equation}
(T)+(V)+(Z) = (R^1_\alpha,h_b\zetadea-\mu\nablag\cdot(h_b\nablag\zetadea))_2+(R^2_\alpha,\Vda)_2, \label{c3:TVZ}
\end{equation}
where the time derivatives are 
\begin{equation}
(T) = (h_b\Bi(I-\delta\Delta)\dt \Vda,(I-\delta\Delta)\Vda)_2+((I-\delta\Delta)\dt\qa,h_b\zetadea-\mu\nablag\cdot(h_b\nablag\zetadea))_2, \label{c3:time_terms}
\end{equation}
the vanishing terms are 
\begin{equation}
(V)  = (\hbi\nablag\cdot(h_b\Vda),h_b\zetadea-\mu\nablag\cdot(h_b\nablag\zetadea))_2+(h_b\A\nablag\zetadea,\Vda)_2,\label{c3:vanishing_terms}
\end{equation}
and the terms of order zero to the contribution of the energy estimate:
\begin{equation}
(Z) = \epsilon(h_b\Vde\cdot\nablag\Vda,\Vda)_2+\epsilon(\Vd\cdot\nablag\qa,h_b\zetadea-\mu\nablag\cdot(h_b\nablag\zetadea))_2. \label{c3:symmetric_terms}
\end{equation}

\textit{- Control of the vanishing terms $(V)$}\qquad 
All has been made to conserve a certain symmetry in the equation, which is crucial here. Using the definition of $\A$ in the expression \eqref{c3:vanishing_terms}, one has \begin{align*}(V) &= (\nablag\cdot(h_b\Vda),\zetadea)_2+(h_b\nablag\zetadea,\Vda)_2  \\&-\mu(\hbi\nablag\cdot(h_b\Vda),\nablag\cdot(h_b\nablag\zetadea))_2-\mu(\nablag(\hbi\nablag\cdot(h_b\nablag\zetadea))_2,h_b\Vda)_2.\end{align*} By integrating by parts, the first two terms cancel one another, and the last two terms cancel one another. Therefore, $(V)$ actually vanishes.\par\vspace{\baselineskip}

\textit{- Control of the terms of order zero $(Z)$}\qquad
We start to control the easiest term of \eqref{c3:symmetric_terms}, which is the first one, by a classical symmetry trick:
\begin{align*}
\epsilon (h_b\Vde\cdot\nablag\Vda,\Vda)_2 &= \epsilon\sum_{j=1}^d (\Vde_j\partial_j\Vda,\Vda)_2 \\
&= -\epsilon \sum_{j=1}^d (\Vda,(\partial_j\Vd_j)\Vda)_2-\epsilon \sum_{j=1}^d (\Vda,\Vde_j\partial_j\Vda)_2
\end{align*}
by integrating by parts, and therefore 
$$\epsilon (h_b\Vde\cdot\nablag\Vda,\Vda)_2 = -\frac{1}{2}\epsilon \sum_{j=1}^d (\Vda,(\partial_j\Vde_j)\Vda)_2.$$ Using Cauchy-Schwarz inequality, one gets:
\begin{align}
\vert \epsilon (h_b\Vde\cdot\nablag\Vda,\Vda)_2 \vert \leq \epsilon\frac{1}{2}\vert \Vde\vert_{W^{1,\infty}}\vert\Vda\vert_2^2.\label{c3:symmetric1}
\end{align}
Note that the symmetry of this term is crucial here. For the other terms of $(Z)$ given in \eqref{c3:symmetric_terms}, the symmetry is less clear, since $\qa$ is not exactly $\zetadea$. We use the Proposition \ref{c3:qzeta} to compute:
\begin{align}
\epsilon(\Vde\cdot\nablag\qa,h_b\zetadea)_2 &= \epsilon(\Vde\cdot\nablag(Q_1(\zetade,h_b)\zetadea),h_b\zetadea)_2+\epsilon^2(\Vde\cdot\nablag( P_\alpha(\zetadede,h_b)),h_b\zetadea)_2. \label{c3:symmetric2calcul}
\end{align}
The second term of the right hand side of \eqref{c3:symmetric2calcul} is bounded using Cauchy-Schwarz inequality and Proposition \ref{c3:qzeta} by $$\epsilon^2C(\hbi,\vert h_b\vert_{H^N})\vert \zetadea\vert_2^2\vert \Vde\vert_{\infty}\vert h_b\vert_\infty,$$ where $C$ is a smooth non decreasing function of its arguments. The first term of the right hand side of \eqref{c3:symmetric2calcul} is bounded using the same symmetry trick as for the first term of $(V)$:
\begin{align*}
\epsilon(\Vde\cdot\nablag(Q_1(\zetade,h_b)\zetadea),h_b\zetadea)_2 &= \epsilon\sum_{j=1}^d (\Vde_j\partial_j (Q_1(\zetade,h_b)\zetadea),h_b\zetadea)_2\\
&= \epsilon\sum_{j=1}^d (\Vde_j(\partial_jQ_1(\zetade,h_b))\zetadea,h_b\zetadea)_2\\&-\epsilon\sum_{j=1}^d (\zetadea,\partial_j(Q_1(\zetade,h_b)\zetadea)\Vde_jh_b)_2\\&-\epsilon\sum_{j=1}^d (\zetadea,\partial_j(h_b\Vde_j)Q_1(\zetade,h_b)\zetadea)_2
\end{align*}
by integrating by parts. Therefore, using Cauchy-Schwarz's inequality, we get the bound:
$$\vert\epsilon(\Vde\cdot\nablag(Q_1\zetadea),h_b\zetadea)_2\vert \leq \frac{\epsilon}{2}\vert Q_1(\zetade,h_b)\vert_{W^{1,\infty}}\vert h_b\vert_{W^{1,\infty}}\vert\Vde\vert_{W^{1,\infty}}\vert\zetadea\vert_2^2$$
and finally, using the definition of $Q_1$ given by \eqref{c3:inter6}: 
\begin{equation}
\epsilon(\Vde\cdot\nablag\qa,h_b\zetadea)_2 \leq \epsilon C(\hbi,\vert \Vde\vert_\Wuf,\vert\zetadede\vert_\Wuf,\vert h_b\vert_{H^N})\vert\zetadea\vert_2^2	.\label{c3:symmetric2}
\end{equation}
The third term of the right hand side of the symmetric term \eqref{c3:symmetric_terms} is controlled with a similar technique: 
\begin{align}
\mu(\nablag(\Vde\cdot\nablag\qa),h_b\nablag\zetadea)_2 &=\mu(\nablag(\Vde\cdot\nablag(Q_1(\zetade,h_b)\zetadea)),h_b\nablag\zetadea)_2\nonumber\\&+\mu(\nablag(\Vde\cdot\nablag(\epsilon P_\alpha(\zetade,h_b))),h_b\nablag\zetadea)_2.\label{c3:symmetric3calcul}
\end{align}
We recall the identity:
$$\nablag (A\cdot B) = (A\cdot\nablag) B+(B\cdot\nablag) A + B\nablago\cdot A +A\nablago\cdot B.$$
 The first term of the right hand side of \eqref{c3:symmetric3calcul} can be expanded using this last identity:
\begin{equation}\begin{aligned}
\mu(\nablag(\Vde\cdot\nablag\qa),h_b\nablag\zetadea)_2 &=\mu(\Vde\cdot\nablag\nablag(Q_1(\zetade,h_b)\zetadea),h_b\nablag\zetadea)_2\\&+\mu(\Vde\nablago\cdot\nablag(Q_1(\zetade,h_b)\zetadea),h_b\nablag\zetadea)_2\\
&+\mu(\nablag(Q_1(\zetade,h_b)\zetadea)\cdot\nablag\Vde,h_b\nablag\zetadea)_2.
\end{aligned}\label{c3:inter7}\end{equation}
The first term of \eqref{c3:inter7} is a symmetric term, controlled by the same technique as before. The second term vanishes, and the last one with the last term  of the right hand side of \eqref{c3:symmetric3calcul} are easily controlled, and one gets:
\begin{equation}
\vert \mu(\nablag(\Vde\cdot\nablag\qa),h_b\nablag\zetadea)_2\vert \leq \mu C(\vert h_b\vert_{H^{N+1}},\vert\Vde\vert_\Wdf,\vert\zetadede\vert_\Wdf,\hbi)(\vert \nablag\zetadea\vert_2+\vert \zetadea\vert_2)\vert \nablag\zetadea\vert_2 .\label{c3:symmetric3}
\end{equation}
To conclude, putting together \eqref{c3:symmetric1},\eqref{c3:symmetric2} and \eqref{c3:symmetric3}, we proved that \begin{equation}
\vert(Z)\vert\leq \epsilon C(\vert h_b\vert_{H^{N+1}},\vert\Vde\vert_\Wdf,\vert\zetadede\vert_\Wdf,\hbi)E^N(\zetadede,\Vde).  \label{c3:symmetriccontrol}
\end{equation}

\textit{- Control of the time derivatives $(T)$}\qquad

The terms of $(T)$ involve time derivatives, and should be, up to terms controlled by the energy $E^N$, the time derivatives of the energy $E^N$. The first term of \eqref{c3:time_terms} is already symmetric, using the symmetry of $h_b\Bi$ (which is crucial here):
\begin{equation}(h_b\Bi(I-\delta\Delta)\dt\Vda,(I-\delta\Delta)\Vda)_2 = \dt \frac{1}{2}(h_b\Bi(I-\delta\Delta)\Vda,(I-\delta\Delta)\Vda)_2.\label{c3:T1}\end{equation}
For the second term of \eqref{c3:time_terms}, we use again Proposition \ref{c3:qzeta} to write:
\begin{align}
((I-\delta\Delta)^2\dt\qa,h_b\zetadea)_2 &=((I-\delta\Delta)^2\dt (Q_1\zetadea),h_b\zetadea)_2+((I-\delta\Delta)^2\dt( \epsilon P_\alpha\zetadede),h_b\zetadea)_2.\label{c3:time_calcul1}
\end{align}
For the first term of the right hand side of \eqref{c3:time_calcul1}, one computes:
\begin{equation}\begin{aligned}
((I-\delta\Delta)^2\dt (Q_1\zetadea),h_b\zetadea)_2 &= ((I-\delta\Delta)^2\dt (Q_1)\zetadea,h_b\zetadea)_2+((I-\delta\Delta)^2 Q_1\dt\zetadea,h_b\zetadea)_2 \\
&= ((I-\delta\Delta)^2\dt (Q_1)\zetadea,h_b\zetadea)_2\\ &+ ([Q_1,-\delta\Delta]\dt\zetadea,(I-\delta\Delta)h_b\zetadea)_2\\
&+(Q_1(I-\delta\Delta)\dt\zetadea,[-\delta\Delta,h_b]\zetadea)_2\\&+\dt \frac{1}{2}(Q_1(I-\delta\Delta)\zetadea,h_b(I-\delta\Delta)\zetadea)_2.
\end{aligned}\label{c3:inter8}\end{equation}
All these computations are made to obtain the time derivative of a symmetric term with respect to $\zetadea$. The first term of the right hand side of \eqref{c3:inter8} is easily controlled by
$$\vert((I-\delta\Delta)^2\dt (Q_1)\zetadea,h_b\zetadea)_2 \vert \leq \vert \dt Q_1\vert_\Wuf\vert (I-\delta\Delta)\zetadea\vert_2^2 \vert h_b\vert_\Wdf.$$
In order to control the second term of the right hand side of \eqref{c3:inter8}, we  replace $\dt\zetadea$ by its expression given by the equation:
$$\dt\zetadea = -\partial^\alpha(\nablag\cdot(h^\delta\Vde))$$ and we notice that $$\vert\delta(I-\delta\Delta)^{-2}[\Delta,Q_1] \partial^\alpha \nablag\cdot u \vert_2 \leq C(\vert Q_1\vert_\Wdf) \vert\partial^\alpha u\vert_2$$ for all $u$ in $H^N$. Therefore, one gets:
\begin{equation}
((I-\delta\Delta)^2\dt (Q_1\zetadea),h_b\zetadea)_2 = \dt \frac{1}{2}(Q_1(I-\delta\Delta)\zetadea,h_b(I-\delta\Delta)\zetadea)_2 +R\label{c3:T2}
\end{equation}
with $$\vert R\vert \leq C(\vert h_b\vert_{H^N},\hbi,\vert \zetadede\vert_\Wdf,\vert\Vde\vert_\Wdf)(\vert(I-\delta\Delta)\zetadea\vert_2 + \vert \Vda\vert_2)\vert(I-\delta\Delta)\zetadea\vert_2.$$
The same technique can be used for the control of the second term of \eqref{c3:time_calcul1} and one gets finally, combining \eqref{c3:T1} and \eqref{c3:T2}:
\begin{equation}
(T) = \dt \frac{1}{2}(h_b\Bi(I-\delta\Delta)\Vda,(I-\delta\Delta)\Vda)_2 + \dt \frac{1}{2}(Q_1(I-\delta\Delta)\zetadea,h_b(I-\delta\Delta)\zetadea)_2 +R \label{c3:timecontrol1}\end{equation}with
\begin{equation}
 \vert R\vert \leq \mu C(\vert h_b\vert_{H^N},\hbi,\vert \zetadede\vert_\Wdf,\vert\Vde\vert_\Wdf)E^N((I-\delta\Delta)\zetadede,(I-\delta\Delta)\Vde).\label{c3:timecontrol2}\end{equation}

\textit{- Control of the residual terms}\qquad
We now control the terms involving the residuals that appear in \eqref{c3:TVZ}. One has: 
\begin{equation*}
R_1^\alpha = -\epsilon\sum_{0<\beta\leq\alpha} \partial^\beta \Vde\cdot\nablag\partial^{\alpha-\beta}q^\delta -\sum_{0<\beta+\nu\leq\alpha}\partial^\beta( \hbi)\nablag\cdot(\partial^\nu (h_b)\partial^{\alpha-\beta-\nu}\Vde)
\end{equation*}
and thus one has, using a Kato-Ponce type estimate (of the form of Proposition \ref{c3:katoponce}):
\begin{equation}\begin{aligned}\vert R_1^\alpha\vert_2+\sqrt{\mu}\vert \nablag R_1^\alpha\vert_2 \leq C(\hbi,\vert h_b\vert_{H^{N+1}})\vert(\zetadede,\Vde)\vert_{W^{2,\infty}}\times \\(\vert \zetadea\vert_2+\sqrt{\mu}\vert \nablag\zetadea\vert_2+\vert\Vda\vert_2+\sqrt{\mu}\vert \nablag \Vda\vert_2).\end{aligned}\label{c3:reste1}\end{equation}
It is very important to have $\nablag\cdot(h_b\Vda)$ instead of $\nablag\cdot(h\Vda)$ in the equation \eqref{c3:quasilinear_boussi}, because the term $\nablag\cdot(\epsilon\zetadea\Vda)$  would not be  properly symmetrized and thus would not be controlled by the energy.
One has easily, integrating by parts and using Cauchy-Schwarz inequality:
\begin{align}\vert(R_1^\alpha,h_b\zetadea-\mu\nablag\cdot(h_b\nablag\zetadea))_2\vert \leq \vert R_1^\alpha\vert_2\vert h_b\vert_\Wzf\vert\zetadea\vert_2 +\mu\vert\nablag R_1^\alpha\vert_2\vert h_b\vert_\Wzf\vert\nablag\zetadea\vert_2 \label{c3:rest1}
\end{align} 
and thus, using \eqref{c3:reste1}, one gets:
\begin{equation}
\vert(R_1^\alpha,h_b\zetadea-\mu\nablag\cdot(h_b\nablag\zetadea))_2\vert \leq C(\hbi,\vert h_b\vert_{H^{N+1}},\vert(\zetadede,\Vde)\vert_{\Wdf}) (E^N)^2.  \label{c3:reste2}
\end{equation}

 Recall that \begin{equation}
R_2^\alpha = (I-\delta\Delta)[h_b\Bi,\partial^\alpha](I-\delta\Delta)\dt\Vde+\epsilon[h_b\Vde\cdot\nablag,\partial^\alpha]\Vde+[h_b\A,\partial^\alpha]\nablag\zetadede \label{c3:rest_terms}
\end{equation}
To control the first term of \eqref{c3:rest_terms}, as usual one replaces $(I-\delta\Delta)\dt\Vde$ by its expression given by the equation \eqref{c3:quasilinear_boussi}, and uses the definition of $\Tb$ given by \eqref{c3:deftb}:
\begin{equation}\begin{aligned}
(I-\delta\Delta)[h_b\Bi,\partial^\alpha](I-\delta\Delta)\dt\Vde \\= -(I-\delta\Delta)[h_b\mu\Tb,\partial^\alpha]  (h_b\Bi)^{-1}(I-\delta\Delta)^{-1}\big( \epsilon h_b\Vde\cdot\nablag\Vde +h_b\A\nablag\zetadede	\big).
\end{aligned}\end{equation}\label{c3:truc5}
One has:
\begin{equation}\forall k\geq 2,\qquad \mu\vert (h_b\Bi)^{-1} u\vert_{H^k} \leq C (\hbi,\vert h_b\vert_{H^{N+1}})  \vert u \vert_{H^{k-2}}\label{c3:truc4}\end{equation}
using Proposition \ref{c3:regu_bi}, for $C$ a smooth non decreasing function of its arguments. One has also:
\begin{equation}\forall k\geq 0,\qquad \vert (I-\delta\Delta) u \vert_{H^k} \leq C (\vert u\vert_{H^k}+\delta \vert u\vert_{H^{k+2}})\label{c3:truc1}\end{equation}
with $C$ independent on $\delta$, and:
\begin{equation}\forall k\geq 0,\qquad\frac{1}{\delta}\vert (I-\delta\Delta)^{-1} u\vert_{H^{k+2}}+\vert (I- \delta\Delta)^{-1} u\vert_{H^k}\leq \vert u\vert_{H^k}.\label{c3:truc2}\end{equation}
Using the definition of $\Tb$ (see \eqref{c3:deftb}), one has:
\begin{equation}\forall k\geq 0,\qquad \mu\vert [h_b\Tb,\partial^\alpha] u\vert \leq \mu C(\vert h_b\vert_{H^{N+1}})\vert u\vert_{H^{k+\alpha+1}}.\label{c3:truc3}\end{equation}
Using successively the identities \eqref{c3:truc1}, \eqref{c3:truc3}, \eqref{c3:truc4} and \eqref{c3:truc2},   the first term  of the right hand side of \eqref{c3:truc5} is bounded by:
$$\vert(I-\delta\Delta)[h_b\mu\Tb,\partial^\alpha]  (h_b\Bi)^{-1}(I-\delta\Delta)^{-1}\big( \epsilon h_b\Vde\cdot\nablag\Vde)\vert_2\leq C (\hbi,\vert h_b\vert_{H^{N+1}}) \vert \Vda\vert_2.$$

One has to be more careful for the second term of the right hand side of \eqref{c3:truc5}, because the expression $\A\nablag\zetade$ is of order $2$ in $\mu\nablag\zetade$. One writes:
\begin{align*}\vert((I-\delta\Delta)[h_b\mu\Tb,\partial^\alpha]  (h_b\Bi)^{-1}(I-\delta\Delta)^{-1}( h_b \A\nablag\zetade),\Vda)_2\vert \leq\\
\sqrt{\mu}\vert((I-\delta\Delta)[h_b\mu\Tb,\partial^\alpha]  (h_b\Bi)^{-1}(I-\delta\Delta)^{-1}( h_b \A\nablag\zetade)\vert_{H^{-1}}\sqrt{\mu}\vert \Vda\vert_{H^1}\end{align*}
and we use the same controls \eqref{c3:truc1}, \eqref{c3:truc2}, \eqref{c3:truc4}, \eqref{c3:truc5}  as before. Finally, one gets:
\begin{align}
\vert (R_2^\alpha,\Vda)\vert \leq C (\hbi,\vert h_b\vert_{H^{N+1}})( \vert \Vda\vert_2^2+\mu\vert\nablag\Vda\vert_2\vert\nablag\zetadea\vert_2).\label{c3:rest2}
\end{align}
\textit{- Conclusion}

Putting together \eqref{c3:symmetriccontrol}, \eqref{c3:timecontrol1}, \eqref{c3:timecontrol2}, \eqref{c3:rest1} and \eqref{c3:rest2}, one gets:
\begin{align*}\dt \frac{1}{2}(h_b\Bi(I-\delta\Delta)\Vda,(I-\delta\Delta)\Vda)_2 + \dt \frac{1}{2}(Q_1(I-\delta\Delta)\zetadea,h_b(I-\delta\Delta)\zetadea)_2\\  \leq C(\hbi,\vert h_b\vert_{H^{N+1}},\vert \zetadede,\Vde\vert_\Wdf)\times 
E^N((I-\delta\Delta)\zetadede,(I-\delta\Delta)\Vde).\end{align*}
We recall that using Proposition \ref{c3:regu_bi}, one has$$(h_b\Bi V,V)_2\sim \vert V\vert_2^2+\mu\vert \nablag V\vert_2^2$$ and using Proposition \ref{c3:qzeta}, one has $\vert Q_1\vert_{\infty} \geq C(\hbi)$. Therefore, we obtained:
\begin{equation}
\dt E^N((I-\delta\Delta)\zetadede,(I-\delta\Delta)\Vde)\leq  C(\hbi,\vert h_b\vert_{H^{N+1}},\vert \zetadede,\Vde\vert_\Wdf)E^N((I-\delta\Delta)\zetadede,(I-\delta\Delta)\Vde) \label{c3:esimation_energie1}
\end{equation}
where $C$ is a non decreasing continuous function of its arguments, independent on $\delta$. Therefore, using Gronwall's Lemma, $T^\delta$ does not depends on $\delta$. 

\textbf{Step 3-4}\qquad  The rest of the proof is exactly the same as for the local existence of the standard Boussinesq-Peregrine equation, and one gets a solution to \eqref{c3:mollified_boussinesq} on a time interval $[0;T^*[$. One gets however from \eqref{c3:esimation_energie1} that:
\begin{equation*}
\forall T<T^*,\qquad \forall \lambda \geq \underset{t\in[0;T]}{\sup} C(\hbi,\vert h_b\vert_{H^{N+1}},\vert \zeta,\Vu\vert_\Wdf),\qquad  E^N(t) \leq E^N(0) e^{\lambda t}. 
\end{equation*} 
\end{proof}

\subsection{Long time existence in dimension $1$ for the modified Boussinesq-Peregrine equation}\label{c3:section03}
We now make the scaling $$t'=\epsilon t$$ on the equation \eqref{c3:modified_boussinesq}, and we obtain the equation (we get rid of the "primes" in the notation $t'$ for the sake of clarity):
\begin{equation}
\left\{
\begin{aligned}
& \dt q+\Vu\cdot\nablag q+\frac{1}{\epsilon}\hbi\nablag\cdot(h_b\Vu)=0\\
&h_b\Bi \dt\Vu+ h_b\Vu\cdot\nablag\Vu+\frac{1}{\epsilon} h_b\A\nablag\zeta=0.\label{c3:rescaled_bousisnesq}
\end{aligned}\right.\end{equation}
This change of variable is not necessary on a mathematical point of view, but it allows to highlight the singular terms that must be canceled in the energy estimates in order to prove the result, which are the large terms of size $\frac{1}{\epsilon}$. Moreover, it allows the equation \eqref{c3:rescaled_bousisnesq} to be seen as a singular perturbation problem. Note that a time existence of size $\frac{1}{\epsilon}$ for the equation \eqref{c3:modified_boussinesq} is equivalent to a time existence independent on $\epsilon$ for \eqref{c3:rescaled_bousisnesq}. \par\vspace{\baselineskip}
We recall that for all $N\in\mathbb{N}$, we define the following space:
\begin{equation*}\mathcal{E}^N=\lbrace (\Vu,\zeta)\in H^N(\R^d)^d\times H^N(\R^d)\mid E^N(\Vu,\zeta)<\infty\rbrace\end{equation*} where
\begin{equation*}
E^N(\zeta,\Vu) = \vert \zeta\vert_{H^N}+\sqrt{\mu}\vert\nablag\zeta\vert_{H^N}+\vert \Vu\vert_{H^N}+\sqrt{\mu}\vert \nablag \Vu\vert_{H^N}.
\end{equation*}
We prove in this section the following result:
\begin{theorem}
Let $d=1$. Let $N\in\mathbb{N}$ be such that $N>d/2+2$. Let $h_b\in H^{N+1}(\R^d)$ be such that there exists $h_{\min}>0$ such that $$\forall X\in\R^d, h_b(X) \geq h_{\min}.$$ Let $(\zeta_0,\Vu_0)\in \mathcal{E}^N$. Then, there exists $T>0$ and a unique solution $(\Vu,\zeta)$ in $C([0;T[;\mathcal{E}^N)$ to the equation \eqref{c3:rescaled_bousisnesq}, with $$T=C_1(E^N(\zeta_0,\Vu_0),\hbi,\vert h_b\vert_{H^{N+1}}),$$ where $C_1$ is a non decreasing continuous function of its arguments. \label{c3:large_time_theorem}
\end{theorem}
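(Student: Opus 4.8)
The plan is to adapt the method of Bresch--Métivier \cite{bresch_metivier} to the present dispersive setting. The theorem reduces to an \emph{a priori} bound: showing that the energy $E^N$ of a solution of \eqref{c3:rescaled_bousisnesq} stays bounded on a time interval whose length depends only on $E^N(\zeta_0,\Vu_0)$, $\hbi$ and $\vert h_b\vert_{H^{N+1}}$, with constants \emph{independent of} $\epsilon$ (and $\mu$). Combined with the short-time existence and the continuation criterion of Theorem \ref{c3:existence_modified_boussinesq} and a continuity argument, this yields the stated existence time. As in Section \ref{c3:section02}, one works with the unknown $q=\frac1\epsilon\log(1+\epsilon\zeta/h_b)$ rather than $\zeta$, so that the transport term $\frac1\epsilon\hbi\nablag\cdot(h_b\Vu)$ in the first equation is symmetrized by the symmetrizer $\mathcal S$ of \eqref{c3:symmetrizerS}; the relation between derivatives of $q$ and of $\zeta$ is controlled throughout by Proposition \ref{c3:qzeta}.

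The structural point is that the $\frac1\epsilon$ terms of \eqref{c3:rescaled_bousisnesq} constitute an operator which is antisymmetric with respect to $\mathcal S$, so that in any energy estimate built on $\mathcal S$ these singular contributions cancel exactly --- this is precisely the vanishing of the terms $(V)$ in the proof of Theorem \ref{c3:existence_modified_boussinesq}. The only obstruction to an $\epsilon$-independent estimate therefore comes from the commutators produced by differentiating the equation: a spatial derivative $\partial^\alpha$ generates $[\frac1\epsilon h_b\A\nablag,\partial^\alpha]$ and $[\frac1\epsilon\hbi\nablag\cdot(h_b\cdot),\partial^\alpha]$, which are of size $\frac1\epsilon$ times a factor involving $\nablag h_b$ --- and $\nablag h_b$ is \emph{not} of size $\epsilon$. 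Following Bresch--Métivier, I would instead differentiate in \emph{time}: since $h_b$, $\hbi$, $\A$ and $\Bi$ do not depend on $t$, one has $[\frac1\epsilon h_b\A\nablag,\partial_t^k]=[\frac1\epsilon\hbi\nablag\cdot(h_b\cdot),\partial_t^k]=0$, and the only remaining commutators, coming from the $O(1)$ transport terms $\Vu\cdot\nablag$, are harmless. Running the energy estimate of Section \ref{c3:section02} on $\partial_t^k U$ for $k=0,\dots,N$, with the energy $\mathcal E_k=\frac12(h_b\Bi\partial_t^k\Vu,\partial_t^k\Vu)_2+\frac12\big(Q_1\partial_t^k\zeta,(h_b-\mu\nablag\cdot(h_b\nablag\cdot))\partial_t^k\zeta\big)_2$ and handling the $q/\zeta$ discrepancy via Proposition \ref{c3:qzeta} (and Remark \ref{c3:remarkq}, which is even more favourable for pure time derivatives since $\dt h_b=0$), should give
$$\frac{d}{dt}\sum_{k=0}^N\mathcal E_k\le C\big(\hbi,\vert h_b\vert_{H^{N+1}},\vert(\zeta,\Vu)\vert_\Wdf\big)\sum_{k=0}^N\mathcal E_k ,$$
with $C$ independent of $\epsilon$.

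It then remains to close the loop, namely to control $E^N$ and the initial values $\partial_t^k U(0)$ by the time-derivative energy $\sum_k\mathcal E_k$. Here the equations are used as (degenerate-)elliptic relations: from the second equation one reads off $\A\nablag\zeta$ in terms of $\Bi(\epsilon\partial_t\Vu)$ and $\epsilon\Vu\cdot\nablag\Vu$, and from the first, $\nablag\cdot(h_b\Vu)$ in terms of $\epsilon\partial_t q$ and $\epsilon\Vu\cdot\nablag q$, so that the invertibility and elliptic estimates of Propositions \ref{c3:regu_bi} and \ref{c3:regu_inverse} let one trade each spatial derivative for a time derivative up to controlled, $\epsilon$-small remainders; iterating, $E^N(t)\le C\big(\sum_k\mathcal E_k(t)\big)$ and likewise $\sum_k\mathcal E_k(0)\le C\big(E^N(\zeta_0,\Vu_0)\big)$ (one must work with the well-scaled quantities $\epsilon\partial_t^k U$, since $\partial_t q$ is a priori of size $\frac1\epsilon$). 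Sobolev embedding ($N>d/2+2$) then converts the differential inequality into a closed estimate for a quantity equivalent to $(E^N)^2$, and Gronwall's lemma gives boundedness on an $\epsilon$-independent time interval. The main obstacle is exactly this last step: $\Bi$ and $\A$ are only \emph{degenerate} elliptic --- their inverses control a divergence, not a full gradient --- so the trade between time and space derivatives genuinely closes only when $\nablag\cdot$ is itself a full derivative, i.e. in dimension $d=1$; in $d=2$ the orthogonal-gradient contributions obstruct the argument, which is why the theorem is stated for $d=1$ only.
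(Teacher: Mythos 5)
Your proposal follows essentially the same route as the paper's proof: you work with the scaled time derivatives $(\epsilon\partial_t)^k$, exploit that $h_b$, $\A$, $\Bi$ are time-independent so the singular $\frac1\epsilon$ terms remain symmetrized and produce no bad commutators, and then use the equations together with the (degenerate) invertibility of the elliptic-in-divergence operators to recover spatial derivatives from time derivatives, which closes only in $d=1$ exactly as you observe. The paper merely packages this bookkeeping differently --- a bound $E^N\le C(K)(t+\epsilon)+C_0$ with $K=\sup E^N$, a backward induction on $k$ using Proposition \ref{c3:reguA} and Lemma \ref{c3:reguATB} for the operator $(h_b\A)^{-1}h_b\Bi$, and a continuity argument --- rather than your Gronwall formulation, but the underlying ideas coincide.
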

In particular, the time of existence does not depend on $\epsilon,\mu$. 
\begin{remark}It is very important to note that $d=1$ here. In $d=2$, there is an extra difficulty due to the need of a good estimate for $\nablago\cdot\Vu$. However, since this is the only difficulty that could prevent a similar result in dimension $d=2$ to hold, we keep the notations of the multidimensional equation, and we specifically highlight at the end of the proof the difficulty that one must overcome to prove the result in dimension $2$. 
\end{remark}
Let us consider $(\Vu,\zeta)$ the unique solution of \eqref{c3:rescaled_bousisnesq} given by Theorem \ref{c3:existence_modified_boussinesq} on a time interval $[0;T^\epsilon]$. We set $$K = \underset{t\in[0;T^\epsilon]}{\sup} E^N(\zeta,\Vu).$$ We use the notation $$u_k = (\epsilon\dt)^k u$$ for all distribution $u$ (thus $u_k$ corresponds to the time derivative of $u$ in the original time variables).\par\vspace{\baselineskip}
The idea of the proof is to obtain a "good" energy estimate of the form $$E(t)\leq C(K)(t+\epsilon)+C_0,$$ where $C$ is non decreasing and smooth, and where $C_0$ only depends on the initial data. Such estimate would allow us to get by a continuity argument a time existence uniform with respect to $\epsilon$. There are two main ideas in the proof:
\begin{itemize}[label=--,itemsep=0pt]
\item The system is still symmetric with respect to singular terms if we differentiate it with respect to time. It allows us to get the "good estimate" for the time derivatives $\zeta_k,\Vu_k$. 
\item Using the equation, one can control the space derivatives by the time derivatives, and recover the "good estimate" for the full energy $E^N$ of the solutions. 
\end{itemize}

The following Proposition states that the time derivatives of the solutions $(\Vu,\zeta)$ have the same regularity as the space derivatives:
\begin{proposition}
One has, for all $0\leq k\leq N$, $$\vert (\Vk,\zetk)\vert_{H^{N-k}}+\sqrt{\mu}\vert (\Vk,\zetk)\vert_{H^{N-k+1}} \leq C(K),$$ where $C$ is a smooth, non decreasing function of its argument.\label{c3:regu_temps}
\end{proposition}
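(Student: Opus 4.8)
The plan is to prove Proposition \ref{c3:regu_temps} by a finite downward induction on $k$, running from $k=N$ down to $k=0$, or equivalently an upward induction: one controls $(\Vu_k,\zeta_k)$ in $H^{N-k}$ (with the extra $\sqrt\mu$-gain of one derivative) by expressing the spatial derivatives in terms of lower-order time derivatives through the equation \eqref{c3:rescaled_bousisnesq}. The base case $k=N$ is just the definition of $K$ together with $E^N(\zeta,\Vu)\le K$, since $(\Vu_0,\zeta_0)=(\Vu,\zeta)$ and $E^N$ already contains $H^N$ and $\sqrt\mu H^{N+1}$ control; strictly speaking for $k=0$ the statement is exactly $E^N(\zeta,\Vu)\le C(K)$. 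So the real content is the inductive step: assuming $\vert(\Vu_j,\zeta_j)\vert_{H^{N-j}}+\sqrt\mu\vert(\Vu_j,\zeta_j)\vert_{H^{N-j+1}}\le C(K)$ for all $j>k$, deduce the same bound for $k$.

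\textbf{The inductive step.} Apply $(\epsilon\dt)^k$ to both equations of \eqref{c3:rescaled_bousisnesq}. From the first equation one gets
$$\frac{1}{\epsilon}\hbi\nablag\cdot(h_b\Vu_k) = -\dt q_k - (\epsilon\dt)^{k}\big(\Vu\cdot\nablag q\big)\big/\epsilon^{\,0}\ \text{(rewritten)},$$
more precisely $\nablag\cdot(h_b\Vu_k) = -\epsilon h_b\,\dt q_k - \epsilon h_b\,(\epsilon\dt)^{k}(\Vu\cdot\nablag q)/\!\!\cdots$; the key point is that $\dt q_k = \epsilon^{-1}q_{k+1}$, so $\nablag\cdot(h_b\Vu_k)$ is expressed through $q_{k+1}$ (known at level $k+1$, hence controlled in $H^{N-k-1}$) plus $\epsilon$ times Moser-type products of lower-order time derivatives of $\Vu,\zeta,q$ (each controlled by the inductive hypothesis and Proposition \ref{c3:qzeta} to pass between $q$ and $\zeta$). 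This controls $\nablag\cdot\Vu_k$ in $H^{N-k-1}$, and with the $\sqrt\mu$-weights it controls $\sqrt\mu\,\nablag\cdot\Vu_k$ in $H^{N-k}$. Symmetrically, from the second equation,
$$h_b\A\nablag\zeta_k = -\epsilon h_b\Bi\,\dt\Vu_k - \epsilon h_b\,(\epsilon\dt)^k(\Vu\cdot\nablag\Vu) = -h_b\Bi\,\Vu_{k+1} + (\text{controlled }\epsilon\text{-terms}),$$
so $\A\nablag\zeta_k$ is controlled; since $d=1$, $\A = I - \mu\partial_x(\hbi\partial_x(h_b\cdot))$, which is invertible with $\vert\A^{-1}f\vert_{H^m}+\sqrt\mu\vert\A^{-1}f\vert_{H^{m+1}}+\mu\vert\A^{-1}f\vert_{H^{m+2}}\le C\vert f\vert_{H^m}$ (the one-dimensional analogue of Proposition \ref{c3:regu_bi}, and in fact $\A$ is \emph{more} regularizing than $h_b\Bi$). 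Applying $\A^{-1}$ recovers $\nablag\zeta_k$ in $H^{N-k-1}$ and $\sqrt\mu\,\nablag\zeta_k$ in $H^{N-k}$. It then remains to upgrade $\nablag\cdot\Vu_k\in H^{N-k-1}$ and $\nablag\zeta_k\in H^{N-k-1}$ to the full $H^{N-k}$ bounds on $\Vu_k,\zeta_k$ themselves: in $d=1$, $\nablag=\partial_x$ and $\nablag\cdot=\partial_x$, so $\vert\Vu_k\vert_{H^{N-k}}\lesssim \vert\Vu_k\vert_{L^2}+\vert\partial_x\Vu_k\vert_{H^{N-k-1}}$ and likewise for $\zeta_k$; the $L^2$ norms $\vert\Vu_k\vert_2,\vert\zeta_k\vert_2$ are obtained directly from the basic $L^2$ energy estimate for \eqref{c3:rescaled_bousisnesq} differentiated $k$ times in time — the singular $\frac1\epsilon$ terms cancel by the antisymmetry built into the modified equation (exactly the mechanism used in the proof of Theorem \ref{c3:existence_modified_boussinesq}), so these norms are $\le C(K)$. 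This closes the induction.

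\textbf{Main obstacle.} The delicate point is the regularity bookkeeping: one must check that at step $k$ the source terms produced by commutators and Moser estimates genuinely only involve $(\Vu_j,\zeta_j,q_j)$ with $j\le k+1$ in norms no higher than those already controlled, and that every occurrence of the "bad" operator $h_b\Bi$ or $\A$ hitting a time derivative is either (i) multiplied by $\epsilon$, or (ii) rewritten as $\Vu_{k+1}$/$q_{k+1}$, so that no genuine loss of a spatial derivative occurs. The fact that $h_b\Bi$ (and $\A$) are only "order-one elliptic in the divergence" is exactly why the $X^s$-type spaces and the $\sqrt\mu$-weighted norms in $E^N$ are indispensable; one must track the $\mu$-powers carefully so that, e.g., $\mu\vert(h_b\Bi)^{-1}\nablag\zeta_{k}\vert_{H^{m+2}}$ is paired against $\sqrt\mu\vert\Vu_k\vert_{H^{m+1}}$ and the like. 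In dimension $2$ this is precisely where the argument would break: one would additionally need to control $\nablago\cdot\Vu_k$, for which the equation provides no direct expression, and the ellipticity of $h_b\Bi$ in the $\nablago\cdot$ direction does not transfer through the time-differentiated system in the same clean way — this is the difficulty flagged in the remark following the theorem.
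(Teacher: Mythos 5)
There is a genuine gap, and it is structural: you have run the induction in the wrong direction. Proposition \ref{c3:regu_temps} is a purely static regularity statement ("time derivatives are as regular as space derivatives, uniformly in terms of $K$"), and the natural — and in fact the paper's — proof is an \emph{upward} induction on $k$: the case $k=0$ is exactly $E^N(\zeta,\Vu)\le K$, and then one applies $(\epsilon\dt)^k$ to \eqref{c3:rescaled_bousisnesq} to read off $q_{k+1}=-\epsilon\sum_{j}\Vu_j\cdot\nablag q_{k-j}-\hbi\nablag\cdot(h_b\Vu_k)$ and $\Vu_{k+1}=-(h_b\Bi)^{-1}\big(\epsilon h_b\sum_j\Vu_j\cdot\nablag\Vu_{k-j}+h_b\A\nablag\zeta_k\big)$, so that level $k+1$ is expressed through level $\le k$ quantities with the loss of at most one spatial derivative; the only delicate term, $(h_b\Bi)^{-1}(h_b\A\nablag\zeta_k)$, is handled by the smoothing of $(h_b\Bi)^{-1}$ from Proposition \ref{c3:regu_bi} (the $\mu$ in front of the third-order part of $\A$ is absorbed by $\mu\vert(h_b\Bi)^{-1}f\vert_{H^{m+2}}\le C\vert f\vert_{H^m}$), together with Proposition \ref{c3:qzeta} to trade $q_k$ for $\zeta_k$. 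Your proposal instead assumes the bound for all $j>k$ and tries to descend, but then (i) your base case $k=N$ is not "just the definition of $K$" — $K$ only bounds spatial derivatives of $(\zeta,\Vu)$ at time-derivative level $0$, and $(\Vu_0,\zeta_0)=(\Vu,\zeta)$ is the $k=0$ case, not $k=N$ — and (ii) the descent only recovers $\nablag\cdot(h_b\Vu_k)$ and $\nablag\zeta_k$, so you still need the $L^2$ norms of the intermediate $\Vu_k,\zeta_k$, which you propose to obtain from the $L^2$ energy estimate for the $k$-times time-differentiated system.

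That last step is where the argument breaks down (besides being unnecessary for a statement of this type): the residuals $R^1_k=[\Vu,(\epsilon\dt)^k]q$ and $R^2_k=[\Vu\cdot\nablag,(\epsilon\dt)^k]\Vu$ in that energy estimate are sums of products $\Vu_j\cdot\nablag q_{k-j}$, $\Vu_j\cdot\nablag\Vu_{k-j}$ with $j\le k$, i.e.\ exactly the time-derivative bounds that Proposition \ref{c3:regu_temps} is supposed to furnish — in the paper it is Lemma \ref{c3:bonne_estimation_temps} (which comes \emph{after} and cites this Proposition) that performs the antisymmetry cancellation of the $\tfrac1\epsilon$ terms, so invoking it here is circular; one would moreover need bounds on the time derivatives at $t=0$, which again require the upward use of the equation. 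In short, the mechanism you describe (recovering space derivatives from time derivatives via $(h_b\A)^{-1}$, Lemma \ref{c3:reguATB}, and the symmetric $L^2$ estimates) is the proof of the later "good estimate" Lemmas \ref{c3:bonne_estimation_temps}--\ref{c3:keylemma}, not of Proposition \ref{c3:regu_temps}; for the Proposition itself no energy estimate is needed at all, only the forward bootstrap through the equation together with Propositions \ref{c3:regu_bi} and \ref{c3:qzeta}.
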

\begin{proof}
For $k=0$, it is clear. Suppose it is true for $k\geq 0$. One commutes $(\epsilon\partial_t)^k$ with the equation \eqref{c3:rescaled_bousisnesq}. One gets, since $\dt h_b =0$:
\begin{align*}
\begin{cases}
q_{k+1}+\epsilon\sum_{j=0}^k \Vu_j\cdot\nablag q_{k-j}+\nablag\cdot(h_b \Vk) =0\\
\Vu_{k+1} =- (h_b\Bi)^{-1} (\epsilon h_b \sum_{j=0}^k \Vu_j\cdot \nablag\Vu_{k-j}+h_b\A \nablag\zetk).
\end{cases}
\end{align*}
We only prove the most difficult estimate which is the following, in order to prove that the induction hypothesis is true at rank $k+1$:
$$\vert(h_b\Bi)^{-1} (h_b\A \nablag\zetk )\vert_{H^{N-k-1}}\leq C(K).$$ We recall that $$\A = I-\mu\nablag\hbi\nablag\cdot(h_b\cdot)$$ and therefore, using Proposition \eqref{c3:regu_bi}:
\begin{align*}
\vert(h_b\Bi)^{-1} (h_b\A \nablag\zetk )\vert_{H^{N-k-1}} &\leq \vert(h_b\Bi)^{-1} (h_b \nablag\zetk )\vert_{H^{N-k-1}}+\mu \vert(h_b\Bi)^{-1}( \nablag\hbi\nablag\cdot(h_b\nablag\zetk ))\vert_{H^{N-k-1}}\\
&\leq C(\hbi,\vert h_b\vert_{H^{N+1}})\vert\zetk\vert_{H^{N-k}}
\end{align*}
and one gets the desired control by using the induction hypothesis. The other controls are done similarly, using Proposition \ref{c3:regu_bi}, and the relation between $q_k$ and $\zeta_k$ given by Proposition \ref{c3:qzeta} and Remark \ref{c3:remarkq}.\qquad$\Box$
\end{proof}
The key point of the proof of Theorem \ref{c3:large_time_theorem} is the following Lemma, which states a "good estimate" for the unknowns:
\begin{lemma} One has $$E^N(\zeta,\Vu)\leq C(K)(t+\epsilon)+C_0$$ where $C$ is a non decreasing function of its arguments, and $C_0$ is a constant which only depends on the initial data.\label{c3:keylemma}
\end{lemma}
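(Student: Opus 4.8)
The plan is to adapt to the dispersive system \eqref{c3:rescaled_bousisnesq} the scheme of Bresch--Métivier \cite{bresch_metivier}, following the two ideas announced just above: first establish the ``good estimate'' for the time derivatives $(\zetk,\Vk)$, then read the two equations of \eqref{c3:rescaled_bousisnesq} as elliptic relations expressing the space derivatives of $(\zeta,\Vu)$ in terms of their time derivatives, and finally combine the two to bound $E^N$. All along, Proposition \ref{c3:regu_temps} provides the crude a priori bound $\vert(\Vk,\zetk)\vert_{H^{N-k}}+\sqrt{\mu}\vert(\Vk,\zetk)\vert_{H^{N-k+1}}\le C(K)$ for $0\le k\le N$, which is what will be used to control the commutator and nonlinear remainder terms.

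\textbf{Step 1 (good estimate for the time derivatives).} Fix $0\le k\le N$ and commute the time derivative $k$ times with \eqref{c3:rescaled_bousisnesq}, using the notation $u_k=(\epsilon\dt)^k u$. Since $h_b$, and hence the operators $\Bi$ and $\A$, are independent of time, no bad commutator of the type $(\nablag h_b)\,dL\cdot$ — the obstruction described in the Introduction for space derivatives — is produced, and $\dt Q_1=O(\epsilon)$ by Remark \ref{c3:remarkq}. One then takes the $L^2$ scalar product of the first equation (for $q_k$) with $(h_b-\mu\nablag\cdot(h_b\nablag\cdot))\zetk$ and of the second with $\Vk$, exactly as in the proof of Theorem \ref{c3:existence_modified_boussinesq}, passing between $q_k$ and $\zetk$ through Proposition \ref{c3:qzeta}. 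The crucial point is that the singular terms of size $\frac1\epsilon$ cancel one another, the structure of \eqref{c3:vanishing_terms} being preserved under time differentiation; the surviving transport terms $\sum_j\binom kj\Vu_j\cdot\nablag q_{k-j}$ and $\sum_j\binom kj\Vu_j\cdot\nablag\Vu_{k-j}$ are treated by the symmetry trick used for the block $(Z)$ in \eqref{c3:symmetric_terms} (using the $\mu$-weights for the $\sqrt\mu\nablag$ contributions and Proposition \ref{c3:regu_temps} for the intermediate indices), while the commutators of $h_b\Bi$, $h_b\A$ and $Q_1$ with $\partial^\alpha$ are estimated as in \eqref{c3:reste2}--\eqref{c3:rest2}. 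One thus obtains, for $G_k(t)\sim\vert(\zetk,\Vk)\vert_2^2+\mu\vert\nablag(\zetk,\Vk)\vert_2^2$, an inequality $\frac{d}{dt}G_k\le C(K)(1+G_k)$; since $G_k(0)\le C_0$ (the data being estimated through the equations, with no $\frac1\epsilon$ thanks to the $(\epsilon\dt)^k$ scaling), a Gronwall argument gives $G_k(t)\le C_0+C(K)t$ for all $0\le k\le N$.

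\textbf{Step 2 (space derivatives from time derivatives).} The first equation of \eqref{c3:rescaled_bousisnesq} reads $\nablag\cdot(h_b\Vu)=-h_bq_1-\epsilon h_b\Vu\cdot\nablag q$; in dimension $d=1$ this controls one full space derivative of $\Vu$ by $q_1$, hence by $\zeta_1$ through Proposition \ref{c3:qzeta}, up to a genuinely $O(\epsilon)$ remainder. The second equation reads $h_b\A\nablag\zeta=-h_b\Bi\Vu_1-\epsilon h_b\Vu\cdot\nablag\Vu$; since $\A$ and $h_b\Bi$ are elliptic of order two up to the $\mu$-weighting (Proposition \ref{c3:regu_bi}), this controls the space derivatives of $\zeta$ — with the $\sqrt\mu$-weight on the top one — by $\Vu_1$, again up to $O(\epsilon)$. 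Differentiating these two relations in space and alternating them (a finite induction, each step trading one space derivative for one time derivative and relocating it onto the other relation, the loss being exactly the $\sqrt\mu$ present in the definition \eqref{c3:energix} of $E^N$), one bounds every contribution to $E^N(\zeta,\Vu)$ by $C(K)\sum_{k=0}^N\sqrt{G_k}+\epsilon\, C(K)$ plus lower order terms, the latter being $\epsilon\,C(K)$ by Proposition \ref{c3:regu_temps}.

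\textbf{Conclusion and main difficulty.} Inserting the bound of Step 1 into Step 2 gives $E^N(\zeta,\Vu)\le C(K)(t+\epsilon)+C_0$, which is the claim. The delicate part is Step 2: one must keep track, order by order, of the nonlinear dictionary $q=Q(\zeta)\zeta$ so that every substitution of $q$ by $\zeta$ only produces lower order or $O(\epsilon)$ terms, and of the fact that $\A$ and $h_b\Bi$ are not genuinely elliptic of order two but only so after multiplication by $\mu$, so the conversion closes precisely because $E^N$ carries the right powers of $\sqrt\mu$. This is also where $d=1$ is used: in dimension $2$ the same scheme still controls $\nablag\cdot\Vu_k$ through the first equation, but leaves the curl $\nablago\cdot\Vu_k$ without an equation free of the singular factor $\frac1\epsilon$, and it is exactly this missing estimate for $\nablago\cdot\Vu$ that obstructs the proof there.
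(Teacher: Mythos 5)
Your overall architecture is the same as the paper's: first the ``good estimate'' for the time derivatives $(\zetk,\Vk)$, obtained because the time-differentiated system keeps the symmetry of the singular $\frac1\epsilon$ terms (this is the paper's Lemma \ref{c3:bonne_estimation_temps}), then a finite induction converting space derivatives into time derivatives through the two equations, with $d=1$ entering exactly where you say it does (no equation free of $\frac1\epsilon$ for $\nablago\cdot\Vk$ when $d=2$). However, the pivotal step of that conversion is not justified in your proposal, and as you state it it would fail. You claim that $\A$ and $h_b\Bi$ are ``elliptic of order two up to the $\mu$-weighting (Proposition \ref{c3:regu_bi})'' and that this lets you read off the space derivatives of $\zeta$ from $h_b\A\nablag\zeta=-h_b\Bi\Vu_{1}-\epsilon h_b\Vu\cdot\nablag\Vu$. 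But $\A=I-\mu\nablag(\hbi\nablag\cdot(h_b\cdot))$ is precisely \emph{not} elliptic (the paper insists on this: for a divergence-free field $V$ one has $\A V=V$, so $(h_b\A V,V)_2$ gives no $\mu\vert\nablag V\vert_2^2$ at all), and Proposition \ref{c3:regu_bi} concerns $\Bi$, not $\A$. Moreover a direct operator-norm estimate does not close: $\vert(h_b\A)^{-1}(h_b\Bi\Vu_{k+1})\vert_{X^{N-k-1}}\leq C\vert h_b\Bi\Vu_{k+1}\vert_{H^{N-k-1}}$ would require a bound on $\mu\vert\Vu_{k+1}\vert_{H^{N-k+1}}$, i.e.\ two extra derivatives paid with a single factor $\mu$, whereas Step 1 and Proposition \ref{c3:regu_temps} only give one extra derivative with a factor $\sqrt\mu$.

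What the paper does at this point, and what is missing from your argument, is the duality mechanism of Proposition \ref{c3:reguA} and Lemma \ref{c3:reguATB}. One pairs the identity $\nablag\zeta_k=-(h_b\A)^{-1}(h_b\Bi\Vu_{k+1}+\epsilon h_b\Vu\cdot\nablag\Vk-\epsilon R_k^2)$ with $h_b(I-\mu\nablag\nablag\cdot)\nablag\zeta_k$ in $H^{N-k-1}$, and exploits three structural facts: (i) $(h_b\A)^{-1}$, though not smoothing in all directions, controls $\sqrt\mu$ times the divergence, which is a full derivative here because the quantity being estimated, $\nablag\zeta_k$, is curl-free; (ii) the order-two part of $h_b\Bi$ is built from gradients, and the extra piece $\nablago\nablago\cdot$ vanishes against gradients (and is identically zero when $d=1$), so one integration by parts moves one derivative onto $\nablag\zeta_k$ and yields the bound $((h_b\A)^{-1}h_b\Bi V,h_b\nablag W)_{H^{N-k-1}}\leq C\sqrt\mu\vert\nablag V\vert_{H^{N-k-1}}\vert\nablag W\vert_{H^{N-k-1}}$, where only $\sqrt\mu\vert\nablag\Vu_{k+1}\vert_{H^{N-k-1}}$ appears and this is exactly what the induction hypothesis at rank $k+1$ controls; (iii) the resulting quadratic inequality $\vert\nablag\zeta_k\vert_{X^{N-k-1}}^2\leq C(K)\epsilon+\big(C(K)(t+\epsilon)+C_0\big)\vert\nablag\zeta_k\vert_{X^{N-k-1}}$ is then absorbed to give the good estimate for $\zeta_k$, after which the first equation gives $\nablag\cdot(h_b\Vk)$ and hence, in $d=1$, the full derivative of $\Vk$. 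Your closing remark about ``the right powers of $\sqrt\mu$'' gestures at this bookkeeping, but without the gradient/curl-free structure and the integration by parts encoded in Lemma \ref{c3:reguATB} the conversion from time to space derivatives does not go through; this is the genuine gap to fill.
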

\begin{proof}
There are two ideas in the proof of this lemma:\begin{itemize}[label=--,itemsep=0pt]
\item the time derivatives of the unknowns satisfy a system which is still symmetric with respect to singular terms of size $\frac{1}{\epsilon}$; 
\item  the space derivatives are related to time derivatives by the equation.\end{itemize}
\par The unknowns $(\zetk,\Vk)$ satisfy the following equation:
\begin{equation}
\left\{\begin{aligned}
&\dt q_k +\Vu\cdot\nablag q_k+\frac{1}{\epsilon}\hbi\nablag\cdot(h_b\Vk)=R^1_k \\
&h_b\Bi\dt\Vk + h_b\Vu\cdot\nablag\Vk+\frac{1}{\epsilon}h_b\A\nablag\zeta_k=R^2_k
\end{aligned}\right.\label{c3:quasilinear_boussi_temps}
\end{equation}
where \begin{equation}
R^1_k = [\Vu,(\epsilon\dt)^k] q,\qquad R^2_k = [\Vu\cdot\nablag,(\epsilon\dt)^k] \Vu .\label{c3:rest_terms_temps}
\end{equation}
The symmetry with respect to large terms of size $\frac{1}{\epsilon}$ is conserved, which allows to get the following result:
\begin{lemma} \label{c3:bonne_estimation_temps}
One has, for all $0\leq k\leq N$, $$E^0(\zetk,\Vk) \leq C(K)t+C_0.$$
\end{lemma}
\begin{proof}
The equation \eqref{c3:quasilinear_boussi_temps} is still symmetric with respect to large terms of size $\frac{1}{\epsilon}$. More precisely, if one multiplies the first equation by $h_b\zetk-\mu\nablag\cdot(h_b\nablag \zetk)$, one finds exactly as in the proof of Theorem \ref{c3:existence_modified_boussinesq} an expression of the form:
$(T)+(V)+(Z) = (R^1_k,h_b\zetk-\mu\nablag\cdot(h_b\nablag\zetk))_2+(R^2_k,\Vk)_2$ with exactly the same terms for $(T)$, $(V)$ and $(Z)$ as in \eqref{c3:time_terms}, \eqref{c3:symmetric_terms} with $(\zetaa,\Vda)$ replaced by $(\zetk,\Vk)$. The vanishing terms are exactly ones of size $\frac{1}{\epsilon}$ and the others are controlled exactly with the same techniques, using Proposition \ref{c3:regu_temps} for the regularity of the time derivatives.\qquad $\Box$
\end{proof}
Now, we recover the "good estimate"  of Lemma \ref{c3:keylemma} for the space derivatives of the unknowns, using the equation.
\begin{lemma}
One has, for all $0\leq k\leq N$, $$E^{N-k}(\zetk,\Vk)\leq C(K)(t+\epsilon)+C_0.$$
\end{lemma}
\begin{proof}
We prove it by backward finite induction on $k$. For $k=N$, it is Lemma \ref{c3:bonne_estimation_temps}. Suppose it is true for $k+1$ with $k\leq N-1$. Let us prove it is true for $k$.

\begin{equation}\nablag\zeta_k = -(h_b\A)^{-1} (h_b\Bi \Vu_{k+1}+\epsilon h_b\Vu\cdot\nablag\Vk-\epsilon R_k^2).\label{c3:recover_zeta}\end{equation}
Recall that the operator $\A$ is given by $$\A = (I-\mu\nablag\hbi\nablag\cdot(h_b\cdot)).$$ We also recall that we defined $\vert f\vert_{X^N}$ for $f\in L^2(\R^d)^d$ by: 
$$\vert f\vert_{X^N}^2 = \vert f\vert_{H^N}^2+\mu\vert \nablag\cdot f\vert_{H^N}^2.$$We used the following Proposition that states the invertibility of $h_b\A$ to derive the equality \eqref{c3:recover_zeta}:
\begin{proposition}\label{c3:reguA}
Let $N\in\mathbb{N}$ and let $h_b\in H^N(\R^d)$ be such that there exists $h_{\min}>0$ such that $$\forall X\in\R^d,\qquad h_b(X)\geq h_{\min}.$$ We set, for all $f\in L^2(\R^d)$ : $$\vert f\vert_{X^N}^2 = \vert f\vert_{H^N}^2+\mu\vert\nablag\cdot f\vert_{H^N}^2.$$  The operator $h_b\A$ is invertible on $H^N(\R^d)^d$. Moreover, the following estimates stand.\begin{enumerate}[label = ( \arabic*)]\item   For all $f\in H^N(\R^d)^d$,
$$ \vert (h_b\A)^{-1} f\vert_{X^N} \leq C(\hbi,\vert h_b\vert_{H^{N+1}})\vert f\vert_{H^N}.$$
\item For all $g\in H^N(\R^d)$,
$$ \sqrt{\mu}\vert (h_b\A)^{-1} \nablag g\vert_{X^N} \leq C(\hbi,\vert h_b\vert_{H^{N+1}})\vert g\vert_{H^N}.$$\end{enumerate}
\end{proposition}
We postpone the proof of Proposition \ref{c3:reguA} to Appendix \ref{c3:appendix_operator} for the sake of clarity. Now, in order to use the relation \eqref{c3:recover_zeta}, one takes the $H^{N-k-1}$ scalar product of \eqref{c3:recover_zeta} with $h_b(I-\mu\nablag\nablag\cdot)\nablag\zeta$, and gets, using the notations of Proposition \ref{c3:reguA} :
\begin{equation}
\vert\nablag\zeta_k\vert_{X^{N-k-1}}^2 = -(h_b(I-\mu\nablag\nablag\cdot)\nablag\zeta_k, (h_b\A)^{-1} (h_b\Bi \Vu_{k+1}+\epsilon h_b\Vu\cdot\nablag\Vk-\epsilon R_k^2))_{H^{N-k-1}}.
\label{c3:estimation_zetak}
\end{equation}

Now, one has by definition of $R^2_k$ given by \eqref{c3:rest_terms_temps} : $$h_b\Vu\cdot\nablag\Vu_k-R^2_k = (\epsilon_t)^k (\Vu\cdot\nablag\Vu)$$ and thus this term is sum of terms of the form $$\Vu_l\cdot\nablag\Vu_{k-l},$$ with $0\leq l\leq k$ and therefore one has, using Proposition \ref{c3:reguA}: \begin{align}
\vert  (h_b\A)^{-1} (h_b\Vu\cdot\nablag\Vk-R^2_k)\vert_{X^{N-k-1}} &\leq C(\frac{1}{h_{\min}} ,\vert h_b\vert_{H^{N+1}}) \vert  h_b\Vu\cdot\nablag\Vu_k-R^2_k\vert_{H^{N-k-1}}\nonumber \\
&\leq  C(K).\label{c3:control_transport_zeta}
\end{align}
We now focus on the control of  $((h_b\A)^{-1} (h_b\Bi)\Vu_{k+1},h_b(I-\mu\nablag\nablag\cdot)\nablag\zeta_k)_{H^{N-k-1}}$. Recall that $$\Bi = (I+\mu\Tb-\mu\nablag(\hbi\nablag\cdot(h_b\cdot))-\mu\hbi\nablago\nablago\cdot),\quad \A = (I-\mu\nablag\hbi\nablag\cdot(h_b\cdot)).$$

\begin{lemma}\label{c3:reguATB}
One has, for all $V,W\in H^{k+1}(\R^d)^d$, all $0\leq k\leq N$: $$((h_b\A)^{-1}h_b\Bi V,h_b\nablag W)_{H^k}\leq   C(\hbi,\vert h_b\vert_{H^{N+1}}) \sqrt{\mu}\vert \nablag V\vert_{H^{k}} \vert \nablag W\vert_{H^{k}}.$$
\end{lemma}
\begin{remark}
This Lemma states that even if $(h_b\A)$ is not elliptic (it is essentially $I-\mu\nabla\nablag\cdot$ with variables coefficients), its inverse allows to recover a full derivative if it is applied to a gradient. The quantity $h_b\Bi$ is essentially composed of gradients, except for the term $\nablago\nablago\cdot$ which vanishes in any scalar product with a gradient.
\end{remark}
\begin{proof}

We only give the control of the most difficult terms of the quantity to be controlled, which are: \begin{equation}
\mu ((h_b\A)^{-1}(\nablag(h_b^3\nablag\cdot V)), h_b\nablag W)_{H^{k}},\qquad  \mu ((h_b\A)^{-1}(\nablago\nablago\cdot V),h_b\nablag W)_{H^{k}}\label{c3:terme_difficiles}
\end{equation}
(the other terms from $h_b\Bi$ are controlled even more easily by similar techniques). For the first term of \eqref{c3:terme_difficiles}, one computes:
\begin{align*}
\mu ((h_b\A)^{-1}(\nablag(h_b^3\nablag\cdot V)), h_b\nablag W)_{H^{k}} &\leq  \mu \vert (h_b\A)^{-1}\nablag(h_b^3\nablag\cdot V)\vert_{H^{k}}\vert h_b\nablag W\vert_{H^{k}} \\
&\leq \sqrt{\mu} C(\hbi,\vert h_b\vert_{H^{N+1}})\vert h_b^3\nablag\cdot V\vert_{H^{k}}\vert\nablag W\vert_{H^{k}}
\end{align*}
where we used Proposition \eqref{c3:reguA} to derive the last inequality. Finally, one gets:
\begin{equation}
\mu ((h_b\A)^{-1}(\nablag(h_b^3\nablag\cdot V)), h_b\nablag W)_{H^{k}} \leq C(\hbi,\vert h_b\vert_{H^{N+1}}) \sqrt{\mu}\vert \nablag V\vert_{H^{k}} \vert \nablag W\vert_{H^{k}} .  \label{c3:control_tb_zeta}
\end{equation}

 For the second term of \eqref{c3:terme_difficiles}, one computes, integrating by parts:
\begin{equation}\begin{aligned}
\mu ((h_b\A)^{-1}(\nablago\nablago\cdot V),h_b\nablag W)_{H^{k}} \\ 
= -\mu (\nablag\cdot h_b(h_b\A)^{-1}(\nablago\nablago\cdot V),W)_{H^{k}}\\
+\mu ( \Lambda^{k-1} (h_b\A)^{-1}(\nablago\nablago\cdot V),\Lambda [h_b,\Lambda^{k}] \nablag W)_2\\
+\mu ( [h_b,\Lambda^{k}] (h_b\A)^{-1}(\nablago\nablago\cdot V),\Lambda^{k}\nablag W)_2,
\end{aligned}  \label{c3:nablagozeta} \end{equation} where we recall that $\Lambda = (1+\D^2)^{1/2}$. One has to notice that for $f\in H^1(\R^d)$, $u = (h_b\A)^{-1}(h_b\nablag f)$ is a term of the form $\nablag g$, since $u=\nablag(\hbi\nablag\cdot(h_b u))+\nablag f$, using the definition of $\A$ given by \eqref{c3:defoperateurz}. Therefore, $\nablago\cdot (h_b\A)^{-1}(h_b\nablag f)=0$ for all $f$, and by duality $\nablag\cdot (h_b(h_b\A)^{-1}\nablago w)=0$ for all $w$. The first term of the rhs of term \eqref{c3:nablagozeta} is therefore zero. \par\vspace{\baselineskip} For the second term of the rhs of \eqref{c3:nablagozeta}, one easily proves that, for all $f\in H^k(\R^d)$, using the Kato-Ponce estimate of Proposition \ref{c3:katoponce}: $$\vert \Lambda [h_b,\Lambda^{k}] f\vert_2 \leq C(\vert h_b\vert_{H^{N+1}}) \vert f\vert_{H^{k}},$$ and thus the second term of the rhs of \eqref{c3:nablagozeta}   is bounded by
$$\mu C(\vert h_b\vert_{H^{N+1}})	\vert (h_b\A)^{-1}(\nablago\nablago\cdot V)\vert_{H^{k-1}}\vert \nablag W\vert_{H^{k}}$$ and using Proposition \eqref{c3:reguA}, one gets the bound 
\begin{equation}
\mu \vert ( \Lambda^{k-1} (h_b\A)^{-1}(\nablago\nablago\cdot V),\Lambda [h_b,\Lambda^{k}]\nablag \nablag W)_2 \vert \leq \mu C(\hbi,\vert h_b\vert_{H^{N+1}}) \vert \nablag V \vert_{H^{k}}\vert \nablag W\vert_{H^{k}}.\label{c3:bound_zeta}
\end{equation}
The third term of \eqref{c3:nablagozeta} is controlled similarly with the same bound as \eqref{c3:bound_zeta}. Putting together \eqref{c3:control_tb_zeta} and \eqref{c3:bound_zeta}, one gets the Lemma.\qquad$\Box$

\end{proof}

We can now apply Lemma $\ref{c3:control_tb_zeta}$ to get immediately (note that $\vert \nablag\cdot \nabla\zeta\vert_2\sim \vert \nablag\nablag\zeta\vert_2$):
\begin{align}
\vert ((h_b\A)^{-1} (h_b\Bi)\Vu_{k+1},h_b(I-\mu\nablag\nablag\cdot)\nablag\zeta_k)_{H^{N-k-1}}\vert &\leq  C(\hbi,\vert h_b\vert_{H^{N+1}}) (\nonumber\\\sqrt{\mu}\vert \nablag\Vu_{k+1}\vert_{H^{N-k-1}} \vert \nablag\zeta_k \vert_{H^{N-k-1}}+ \sqrt{\mu}\vert \nablag\Vu_{k+1} \vert_{H^{N-k-1}}\sqrt{\mu}\vert  \nablag\cdot \nablag \zeta_k\vert_{H^{N-k-1}})\nonumber\\
 &\leq  C(\hbi,\vert h_b\vert_{H^{N+1}}) \times\nonumber\\(C(K)(t+\epsilon)+C_0)\vert\nablag\zeta_k\vert_{X^{N-k-1}}\label{c3:control_atbm}
 \end{align}
 using the notations of Proposition \eqref{c3:reguA}, and using the induction hypothesis.  Putting \eqref{c3:control_transport_zeta} and \eqref{c3:control_atbm} into \eqref{c3:recover_zeta}, one gets:
 \begin{align*}
 \vert\nablag\zeta_k\vert_{X^{N-k-1}}^2 \leq C(K)\epsilon +(C(K)(t+\epsilon)+C_0)\vert\nablag\zeta_k\vert_{X^{N-k-1}}.
 \end{align*}
 By noticing that, for all $u$ smooth enough:
 $$\vert u\vert_{X^{N-k}}\leq \vert u\vert_{X^{N-k-1}}+\vert \nablag u\vert_{X^{N-k-1}},$$  one finally recovers the "good estimate"  $k$ for $\zeta_k$:
 $$\vert \zeta_k\vert_{H^{N-k}}+\sqrt{\mu}\vert\nablag\zeta_k\vert_{H^{N-k}}\leq C(K)(t+\epsilon)+C_0.$$

Now, we get the "good estimate" for $\Vk$. The equation \eqref{c3:quasilinear_boussi_temps} gives:
$$\nablag\cdot(h_b\Vk) =- h_b (q_{k+1}+\epsilon\Vu\cdot\nablag q_k)$$ and using induction hypothesis to control $q_{k+1}$, one easily gets
$$\vert \nablag\cdot(h_b\Vk)\vert_{H^{N-k-1}} + \sqrt{\mu}\vert \nablag \nablag\cdot(h_b\Vk)\vert_{H^{N-k-1}} \leq C(K)(t+\epsilon)+C_0.$$
If $d=1$, then we controlled a full derivative of $\Vk$, and the induction hypothesis  is true for $k$.\qquad $\Box$

\begin{remark}If $d=2$, of course, it is not sufficient to control only $\nablag\cdot\Vk$ to recover a good control for $\Vk$ in norm $H^{N-k}$. One should look after a good control for $\nablago\cdot\Vk$. This is obtained by taking $\nablago\cdot$ of the second equation of \eqref{c3:quasilinear_boussi_temps}:
\begin{equation}
\dt (\nablago\cdot\Vk) + \dt \mu\nablago\cdot\nablago(\nablago\cdot\Vk)+ (\Vu\cdot\nablag)  \nablago \cdot\Vk = \nablago\cdot (R_2^k)+[\nablago\cdot,\Vu]\Vk - \nablago\cdot\mu\Tb\dt\Vk \label{c3:nablago_equation}
\end{equation}
However, it is difficult to control $\nablago\cdot\mu\Tb\dt\Vk$. Indeed, $\nablago\cdot\Tb$ is no longer symmetric, which means that multiplying the equation \eqref{c3:nablago_equation}  by $\nablago\cdot\Vu$ creates a term of the form $(\nablago\cdot\Tb\partial_t\Vu,\nablago\cdot\Vu)_2$ which is not the time derivative of a positive quantity.
\end{remark}

The key Lemma \ref{c3:keylemma} is this latter result with $k=0$. We now end the proof of Theorem \ref{c3:large_time_theorem}. We set $$ \epsilon_0 = \frac{C_0}{2C(2C_0)},\qquad T_0 =  \frac{C_0}{2C(2C_0)}.$$ Let fix an $\epsilon>0$ such that $\epsilon<\epsilon_0$. There exists $T^\epsilon$ and a unique solution $(\zeta^\epsilon,\Vu^\epsilon)\in C([0;T^\epsilon[;\mathcal{E}^N)$ to the equation \eqref{c3:rescaled_bousisnesq}. We set $$T^\epsilon_* = \underset{t\in[0;T^\epsilon[}{\sup}\lbrace t,\quad (\zeta^\epsilon,\Vu^\epsilon),\text{ exists on } [0;t] \text{ with :} \forall s\leq t, E^N(\zeta^\epsilon,\Vu^\epsilon)(s)\leq 2C_0\rbrace$$ Then, one has $T^\epsilon_* \geq T_0$. Indeed, suppose it is not true.  One has for all $t<T^\epsilon$, using Lemma \ref{c3:keylemma}:
$$E^N(\zeta^\epsilon,\Vu^\epsilon)(t)\leq C(K)(t+\epsilon)+C_0,$$ with $$K=\underset{t\in[0;T^\epsilon_*[}{\sup} E^N(\zeta^\epsilon,\Vu^\epsilon).$$ Notice that $K\leq 2C_0$ by definition of $T^\epsilon_*$. Since $C$ is non decreasing, one has, for all $t\leq T^\epsilon_*$:
\begin{align*}
E^N(\zeta^\epsilon,\Vu^\epsilon)(t) &\leq C(2C_0)(t+\epsilon)+C_0 \\
&<C(2C_0)(T_0 +\epsilon_0)\\
&< 2C_0
\end{align*}
and therefore, by continuity, there exists $\tilde{T}^\epsilon>T^\epsilon_*$ such that $(\zeta^\epsilon,\Vu^\epsilon)$ exists on $[0;\tilde{T}^\epsilon]$ with $E^N(t) \leq 2C_0$ for all $t\leq\tilde{T}^\epsilon$. It is absurd, by definition of $T^\epsilon_*$. Therefore, the solution exists on $[0;T_0]$ which is the result of Theorem \ref{c3:large_time_theorem}.\qquad$\Box$

\end{proof}
\end{proof}

\begin{appendix}

\section{Classical results on Sobolev spaces}
We recall here some classical results on Sobolev spaces. Proofs can be found in \cite{taylor3}. The first result is the Kato-Ponce estimate on commutators:

\begin{proposition}[Kato-Ponce]\label{c3:katoponce}
For all $s\geq 0$ and $f\in H^s\cap W^{1,\infty}$ and $u\in H^{s-1}\cap L^\infty$, one has the following inequality:
$$\vert [\Lambda^s,f]u\vert_2 \leq C (\vert \nabla f\vert_{H^{s-1}}\vert u\vert_\infty + \vert\nablag f\vert_\infty\vert u\vert_{H^{s-1}})$$
where $C$ is a positive constant independent of $f$ and $u$. 
\end{proposition}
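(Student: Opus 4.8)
The plan is to run the standard Littlewood--Paley / paraproduct argument (compare \cite{taylor3}), which we only sketch, emphasising the one non-routine point. We may assume $s\ge 1$, the range $0\le s<1$ being easier (there $\langle\cdot\rangle^s$ is globally Lipschitz and $H^{s-1}$ a negative order space, so the estimates below only improve). Fix an inhomogeneous dyadic partition of unity with frequency projectors $\Delta_j$ ($j\ge0$), $\Delta_j$ localised in $|\xi|\sim 2^j$ for $j\ge1$ and $\Delta_0$ in a ball, and $S_j=\sum_{k<j}\Delta_k$ localised in frequencies $\le 2^j$; recall that a product of a function at frequency $\sim 2^j$ with one at frequency $\le 2^{j-3}$ is again localised at frequency $\sim 2^j$. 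Expanding $fu=\sum_{j,k}(\Delta_jf)(\Delta_ku)$ and splitting the double sum into the zones $j\le k-3$, $k\le j-3$, $|j-k|\le 2$ gives Bony's decomposition $[\Lambda^s,f]u=\mathrm I+\mathrm{II}+\mathrm{III}$ with
\begin{align*}
\mathrm I&=\sum_{k}[\Lambda^s,S_{k-2}f]\,\Delta_ku,\\
\mathrm{II}&=\sum_{j}[\Lambda^s,\Delta_jf]\,S_{j-2}u,\\
\mathrm{III}&=\sum_{|j-k|\le 2}[\Lambda^s,\Delta_jf]\,\Delta_ku.
\end{align*}
Everywhere one uses the elementary symbol bound $|\langle\xi\rangle^s-\langle\eta\rangle^s|\le C_s\,|\xi-\eta|\,(\langle\xi-\eta\rangle^{s-1}+\langle\eta\rangle^{s-1})$ for $s\ge1$, from a first order Taylor expansion of $\langle\cdot\rangle^s$; the factor $|\xi-\eta|$ is exactly what turns $\widehat f$ into $\widehat{\nabla f}$, so that the commutator never sees $f$ itself, only $\nabla f$.

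The term $\mathrm I$ is the heart of the matter and produces the contribution $|\nabla f|_\infty|u|_{H^{s-1}}$. Each summand $[\Lambda^s,S_{k-2}f]\Delta_ku$ has both of its $\Lambda^s$-inputs localised at frequency $\sim 2^k$, so there one may replace $\Lambda^s$ by convolution with $K_k:=\mathcal F^{-1}(\langle\xi\rangle^s\chi(2^{-k}\cdot))$, $\chi$ a fixed cutoff near the unit annulus; a rescaling shows $\int_{\R^d}|y|\,|K_k(y)|\,dy\le C\,2^{k(s-1)}$. Since for a convolution operator $T_K$ one has $[T_K,g]v(x)=\int K(y)\big(g(x-y)-g(x)\big)v(x-y)\,dy$ and $|g(x-y)-g(x)|\le |y|\,|\nabla g|_\infty$, Young's inequality gives
$$\big|[\Lambda^s,S_{k-2}f]\,\Delta_ku\big|_2\le \Big(\int|y|\,|K_k(y)|\,dy\Big)\,|\nabla S_{k-2}f|_\infty\,|\Delta_ku|_2\le C\,2^{k(s-1)}\,|\nabla f|_\infty\,|\Delta_ku|_2,$$
where we used $|\nabla S_{k-2}f|_\infty\le C|\nabla f|_\infty$. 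The summands being almost orthogonal in $L^2$ (comparable dyadic annuli), squaring, summing in $k$ and using $\big(\sum_k 2^{2k(s-1)}|\Delta_ku|_2^2\big)^{1/2}\le C|u|_{H^{s-1}}$ yields $|\mathrm I|_2\le C|\nabla f|_\infty|u|_{H^{s-1}}$.

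In $\mathrm{II}$ and $\mathrm{III}$ the high frequencies sit on $f$, so the derivatives fall there. For $\mathrm{II}$ (where necessarily $j\ge3$, hence no low block of $f$ occurs) the summand $[\Lambda^s,\Delta_jf]S_{j-2}u$ is localised at frequency $\sim 2^j$ and is bounded, via Bernstein's inequality (so $\Lambda^s$ acts as multiplication by $\sim 2^{js}$ on the relevant pieces) and $|S_{j-2}u|_\infty\le C|u|_\infty$, by $C\,2^{js}|\Delta_jf|_2\,|u|_\infty\le C\,2^{j(s-1)}|\Delta_j\nabla f|_2\,|u|_\infty$; almost orthogonality and summation in $j$ then give $|\mathrm{II}|_2\le C|\nabla f|_{H^{s-1}}|u|_\infty$. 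The resonant term $\mathrm{III}$ is treated identically for $j\ge3$, and for the finitely many low-frequency blocks ($j,k$ bounded) one argues directly with the kernel identity of the previous paragraph applied to $\Lambda^s$ localised to a fixed ball, again producing a factor $|\nabla\Delta_jf|_\infty\le C|\nabla f|_\infty$ and never $|f|_\infty$. Adding the three bounds proves the inequality. The only genuinely non-routine ingredient is the commutator estimate of the second paragraph: one must exploit that the low-frequency factor $S_{k-2}f$ is slowly varying on the scale $2^{-k}$ to gain, simultaneously, the weight $2^{k(s-1)}$ and exactly one $L^\infty$-derivative of $f$; equivalently this is a Coifman--Meyer type bilinear multiplier bound for $(\langle\xi\rangle^s-\langle\eta\rangle^s)$ restricted to $|\xi-\eta|$ of size $\le 2^k\sim|\xi|$. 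All remaining steps are Bernstein's inequality, Plancherel, and $\ell^2$ almost orthogonality.
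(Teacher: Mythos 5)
The paper itself does not prove this proposition (it simply refers to \cite{taylor3}), so the only question is whether your sketch constitutes a complete argument; it does not quite. Your treatment of the paraproduct terms $\mathrm I$ and $\mathrm{II}$ is correct: there the outputs really are localised in dyadic annuli, so $\ell^2$ almost orthogonality applies, and the kernel bound $\int|y|\,|K_k(y)|\,dy\lesssim 2^{k(s-1)}$ together with reverse Bernstein produces exactly the two right-hand side terms. The gap is in the resonant term $\mathrm{III}$. For $|j-k|\le 2$ the product $\Delta_jf\,\Delta_ku$ --- hence also $[\Lambda^s,\Delta_jf]\Delta_ku$ --- has spectrum in a \emph{ball} of radius $\sim 2^j$, not in an annulus: the outputs for different $j$ all overlap at low frequencies, so the step ``almost orthogonality and summation in $j$'' has no content there. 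Block by block one only gets $\big\vert[\Lambda^s,\Delta_jf]\Delta_ku\big\vert_2\lesssim 2^{j(s-1)}\vert\Delta_j\nabla f\vert_2\,\vert u\vert_\infty$, an $\ell^2_j$ bound, and an $\ell^1$ sum of a general $\ell^2$ sequence diverges.

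Half of the resonant commutator, namely $\Lambda^s(\Delta_jf\,\Delta_ku)$, can be repaired inside your framework: decompose in the output frequency $2^m\lesssim 2^j$, note that $\Lambda^s$ only costs $2^{ms}$ there, and sum using the geometric gain $2^{(m-j)s}$ (this needs $s>0$, harmless since the commutator vanishes at $s=0$). But for the other half, $\sum_{|j-k|\le 2}\Delta_jf\,\Lambda^s\Delta_ku$, the factor $2^{ks}$ comes from $\Lambda^s$ acting on $\Delta_ku$ and is independent of the output frequency, so no such gain exists; this is the Bony remainder $R(f,\Lambda^su)$ at total regularity $1+(s-1)=0$ (equivalently $s+(-s)=0$ if the burden is put on $f$), i.e.\ precisely the borderline case where block-by-block estimates cannot be summed. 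Closing it at the stated $L^\infty$/$H^{s-1}$ endpoint needs a genuinely different ingredient: either rewrite the resonant part as a Coifman--Meyer bilinear multiplier applied to $(\Lambda^{s-1}\nabla f,\,u)$ and invoke its $L^2\times L^\infty\to L^2$ boundedness (Calder\'on--Zygmund/Carleson-measure technology, not Plancherel and Bernstein), or strengthen the norm on $u$ (e.g.\ $u\in H^{t_0}$, $t_0>d/2$, which restores $\ell^1$ summability of $\vert\Delta_ku\vert_\infty$), as several water-waves references do. You do invoke Coifman--Meyer, but you attach it to term $\mathrm I$, where the elementary kernel argument already suffices; it is term $\mathrm{III}$ that actually requires it. A minor additional point: your symbol inequality is stated only for $s\ge 1$ and the claim that $0\le s<1$ is ``easier'' is unsubstantiated, but this is immaterial, since your arguments for $\mathrm I$ and $\mathrm{II}$ work for all $s\ge 0$; the real issue is $\mathrm{III}$, for every $s>0$.
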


The following result stands that one can compose any $H^s\cap L^\infty$ function with a smooth function.

\begin{proposition}[Moser]\label{c3:moser}
Let $F:\R\rightarrow\R$ be a smooth function, null at zero. Then, for all $s\geq 0$, and all $u\in H^s(\R^d)$, $F(u)\in H^s(\R^d)$ and $$\vert F(u)\vert_{H^s} \leq c(\vert u\vert_\infty)\vert u\vert_{H^s}$$ where $c$ is a smooth non decreasing function.
\end{proposition}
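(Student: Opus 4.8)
The plan is to reduce the inequality to a bound on the homogeneous Sobolev seminorm, and to split the composition estimate into the integer and the fractional case: the integer case is handled by the chain rule together with Gagliardo--Nirenberg interpolation, the fractional case by interpolation (or, equivalently, by a Littlewood--Paley argument).

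\textbf{Step 1: reduction.} Since $\vert v\vert_{H^s}^2 \sim \vert v\vert_2^2 + \vert v\vert_{\dot H^s}^2$ for $s\ge 0$, it suffices to bound $\vert F(u)\vert_2$ and $\vert F(u)\vert_{\dot H^s}$. For the first, I use that $F$ is null at zero: by the mean value theorem, $\vert F(u(x))\vert = \vert F(u(x))-F(0)\vert \le \big(\sup_{\vert t\vert\le\vert u\vert_\infty}\vert F'(t)\vert\big)\vert u(x)\vert$ pointwise, hence $\vert F(u)\vert_2 \le c_0(\vert u\vert_\infty)\vert u\vert_2$ with $c_0(r)=\sup_{\vert t\vert\le r}\vert F'(t)\vert$ non-decreasing. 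This already settles $s=0$ and the low-frequency contribution for every $s$.

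\textbf{Step 2: integer case $s=m\in\N^*$.} For $\vert\alpha\vert=m$, the Faà di Bruno formula writes $\partial^\alpha(F(u))$ as a finite sum of terms $F^{(k)}(u)\prod_{i=1}^k\partial^{\beta_i}u$ with $\beta_1+\dots+\beta_k=\alpha$, $\vert\beta_i\vert\ge1$, hence $1\le k\le m$. Bound $\vert F^{(k)}(u)\vert_\infty \le \sup_{\vert t\vert\le\vert u\vert_\infty}\vert F^{(k)}(t)\vert$, a non-decreasing function of $\vert u\vert_\infty$ because $F$ is smooth. For the product, apply Hölder with exponents $p_i=2m/\vert\beta_i\vert$ (so $\sum 1/p_i=\vert\alpha\vert/(2m)=1/2$) followed by the Gagliardo--Nirenberg inequality $\vert\nabla^{\vert\beta_i\vert}u\vert_{L^{2m/\vert\beta_i\vert}} \le C\vert u\vert_\infty^{1-\vert\beta_i\vert/m}\vert\nabla^m u\vert_2^{\vert\beta_i\vert/m}$. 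Multiplying these bounds, the exponents of $\vert\nabla^m u\vert_2$ add up to $1$ and those of $\vert u\vert_\infty$ to $k-1\ge0$, so $\vert\partial^\alpha F(u)\vert_2 \le c(\vert u\vert_\infty)\vert\nabla^m u\vert_2$; summing over $\vert\alpha\vert=m$ gives the bound on $\vert F(u)\vert_{\dot H^m}$.

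\textbf{Step 3: fractional case.} Write $s=m+\sigma$ with $m=\lfloor s\rfloor$ and $0<\sigma<1$. If $m=0$, use the Gagliardo--Slobodeckij characterization $\vert v\vert_{\dot H^\sigma}^2 \sim \iint\vert x-y\vert^{-d-2\sigma}\vert v(x)-v(y)\vert^2\,dx\,dy$ together with the pointwise Lipschitz bound $\vert F(u(x))-F(u(y))\vert\le c_0(\vert u\vert_\infty)\vert u(x)-u(y)\vert$ to get $\vert F(u)\vert_{\dot H^\sigma}\le c_0(\vert u\vert_\infty)\vert u\vert_{\dot H^\sigma}$ at once. If $m\ge1$, write $\vert F(u)\vert_{\dot H^s}\sim\sum_{\vert\alpha\vert=m}\vert\partial^\alpha F(u)\vert_{\dot H^\sigma}$, expand each $\partial^\alpha F(u)$ by Faà di Bruno as in Step 2, and estimate each term $F^{(k)}(u)\prod\partial^{\beta_i}u$ in $\dot H^\sigma$ by the fractional Leibniz (Kato--Ponce) inequality, letting one factor carry the $\sigma$-order derivatives and distributing the others over Lebesgue spaces, then applying Gagliardo--Nirenberg to each Lebesgue factor; the exponent bookkeeping is governed, exactly as in Step 2, by the homogeneity (total derivative order $=s$ in each term). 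Alternatively, one runs a single Littlewood--Paley/paraproduct argument valid for all $s\ge0$: write $F(u)=\sum_j m_j\Delta_j u$ with $m_j=\int_0^1 F'(S_j u+t\Delta_j u)\,dt$, use $\vert m_j\vert_\infty\le c_0(C\vert u\vert_\infty)$ and the Bernstein-type bounds $\vert\nabla^\ell m_j\vert_\infty\le C2^{j\ell}c_\ell(\vert u\vert_\infty)$ to handle the non-localized spectral support of each summand, and sum the dyadic pieces against $\sum_j 2^{2js}\vert\Delta_j u\vert_2^2\sim\vert u\vert_{\dot H^s}^2$. Collecting Steps 1--3 and majorizing the resulting continuous non-decreasing function of $\vert u\vert_\infty$ (which involves only finitely many derivatives of $F$, their number depending on $\lceil s\rceil$) by a smooth non-decreasing function yields the stated estimate. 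The main obstacle is this fractional case: the chain rule no longer produces a finite expression, so one must either restrict to the Gagliardo seminorm — clean only for $0<\sigma<1$, which the reduction above exploits — or carry out the Littlewood--Paley analysis, whose technical heart is precisely the uniform control of all $\nabla^\ell m_j$ in terms of $\vert u\vert_\infty$ via Bernstein's inequality, since $F(u)$ need not be spectrally supported in a fixed region even when $u$ is.
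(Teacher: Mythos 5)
The paper does not actually prove this proposition: it is stated in the appendix as a classical result with the proof delegated to \cite{taylor3}, so there is no internal argument to compare against. Your outline is the standard textbook proof that such a citation points to, and it is correct: the $L^2$ piece via the mean value theorem and $F(0)=0$; the integer case via Fa\`a di Bruno, H\"older with exponents $2m/\vert\beta_i\vert$, and Gagliardo--Nirenberg interpolation between $L^\infty$ and $\dot H^m$ (the exponent count $\sum\vert\beta_i\vert/m=1$ is exactly right); and the fractional case either by the Gagliardo--Slobodeckij seminorm (where the Lipschitz bound on $F$ on the range of $u$ suffices) or by the Littlewood--Paley/paralinearization identity $F(u)=\sum_j m_j\Delta_j u$ with $m_j=\int_0^1 F'(S_ju+t\Delta_j u)\,dt$, which indeed telescopes because $S_ju\to0$ and $F(0)=0$. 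Two points are left schematic but are standard and unproblematic: in the case $m\ge1$, $0<\sigma<1$, when the $\sigma$-order derivative falls on $F^{(k)}(u)$ you need a composition bound for $F^{(k)}$ in homogeneous fractional norms even though $F^{(k)}(0)\neq0$ in general (harmless, since the difference seminorm does not see the constant); and the Littlewood--Paley route requires the usual lemma allowing one to sum dyadic pieces that are not spectrally localized, which is precisely what your Bernstein-type bounds on $\nabla^\ell m_j$ feed into. With those details filled in, the proof is complete and consistent with the classical reference the paper relies on.
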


\section{Results on the operator $\mathcal{A}$}\label{c3:appendix_operator}
We prove in this section the regularity of the inverse of $h_b\A$ stated by Proposition \ref{c3:reguA}.
\begin{proposition}
Let $N\in\mathbb{N}$ and let $h_b\in H^N(\R^d)$ be such that there exists $h_{\min}>0$ such that $$\forall X\in\R^d,\qquad h_b(X)\geq h_{\min}.$$ We set, for all $f\in L^2(\R^d)$ : $$\vert f\vert_{X^N}^2 = \vert f\vert_{H^N}^2+\mu\vert\nablag\cdot f\vert_{H^N}^2.$$  The operator $h_b\A$ is invertible on $H^N(\R^d)^d$. Moreover, the following estimates stand.\begin{enumerate}[label = ( \arabic*)]\item   For all $f\in H^N(\R^d)^d$,
$$ \vert (h_b\A)^{-1} f\vert_{X^N} \leq C(\hbi,\vert h_b\vert_{H^{N+1}})\vert f\vert_{H^N}.$$
\item For all $g\in H^N(\R^d)$,
$$ \sqrt{\mu}\vert (h_b\A)^{-1} \nablag g\vert_{X^N} \leq C(\hbi,\vert h_b\vert_{H^{N+1}})\vert g\vert_{H^N}.$$\end{enumerate}
\end{proposition}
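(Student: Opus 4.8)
The plan is to exploit the symmetry of $h_b\A$, to reduce the degenerate vector equation $h_b\A V=f$ to a genuinely elliptic \emph{scalar} one, and then to close everything by $\mu$-weighted energy estimates, in the same spirit as the proofs of Propositions \ref{c3:regu_inverse} and \ref{c3:regu_bi} (see also \cite{david}, Chapter~5). First I would record the variational structure: expanding $\A=I-\mu\nablag(\hbi\nablag\cdot(h_b\cdot))$ and integrating by parts gives, for smooth decaying $V,W$,
$$(h_b\A V,W)_2=(h_bV,W)_2+\mu\big(\hbi\nablag\cdot(h_bV),\nablag\cdot(h_bW)\big)_2,$$
so the form $a(V,W):=(h_b\A V,W)_2$ is symmetric, and since $h_b\ge h_{\min}>0$ it is continuous and coercive for the $X^0$ topology; by Lax--Milgram $h_b\A$ is invertible and $h_b\A V=f$ has a unique solution for every $f\in L^2(\R^d)^d$, with $a(V,V)=0\Rightarrow V=0$ giving uniqueness. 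Setting $w:=\hbi\nablag\cdot(h_bV)$, the equation reads $h_bV-\mu h_b\nablag w=f$, i.e. $V=\hbi f+\mu\nablag w$, and applying $\nablag\cdot(h_b\,\cdot)$ turns it into the scalar equation $h_bw-\mu\nablag\cdot(h_b\nablag w)=\nablag\cdot f$; the operator $\mathcal L:=h_b-\mu\nablag\cdot(h_b\nablag\cdot)$ is uniformly elliptic thanks to $h_b\ge h_{\min}$, so from $f\in H^N$ one bootstraps $w\in H^{N+1}$ and hence $V\in H^N$, which establishes the invertibility statement.

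For estimate (1) I would differentiate $\mathcal L w=\nablag\cdot f$ by $\partial^\alpha$, $|\alpha|\le N$, and pair with $\partial^\alpha w$. The leading term is $\int h_b|\partial^\alpha w|^2+\mu\int h_b|\nablag\partial^\alpha w|^2$, and the forcing is handled using the divergence structure of the right-hand side,
$$(\partial^\alpha\nablag\cdot f,\partial^\alpha w)_2=-(\partial^\alpha f,\nablag\partial^\alpha w)_2\le\frac{1}{2\mu h_{\min}}|\partial^\alpha f|_2^2+\frac{\mu h_{\min}}{2}|\nablag\partial^\alpha w|_2^2,$$
the last term being absorbed on the left. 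The commutators $[h_b,\partial^\alpha]w$ and $\mu[\nablag\cdot(h_b\nablag\cdot),\partial^\alpha]w$ — the second after one further integration by parts, so that a derivative falls on $\partial^\alpha w$ — are estimated by Proposition \ref{c3:katoponce} and absorbed through a finite induction on $|\alpha|$ (this is where $h_b\in H^{N+1}$ is used). One obtains $\sqrt\mu|w|_{H^N}+\mu|\nablag w|_{H^N}\le C(\hbi,|h_b|_{H^{N+1}})|f|_{H^N}$, whence $|V|_{H^N}\le|\hbi f|_{H^N}+\mu|\nablag w|_{H^N}\lesssim|f|_{H^N}$, and, since $\nablag\cdot V=w-\hbi\nablag h_b\cdot V$, also $\sqrt\mu|\nablag\cdot V|_{H^N}\lesssim\sqrt\mu|w|_{H^N}+\mu|V|_{H^N}\lesssim|f|_{H^N}$; together these give $\vert(h_b\A)^{-1}f\vert_{X^N}\le C(\hbi,|h_b|_{H^{N+1}})|f|_{H^N}$.

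For estimate (2) I would split $\nablag g=h_b\nablag(\hbi g)+g\,\hbi\nablag h_b$. The second piece lies in $H^N$ (tame product, using $h_b\in H^{N+1}$), so part (1) applied directly gives $\vert(h_b\A)^{-1}(g\,\hbi\nablag h_b)\vert_{X^N}\le C|g|_{H^N}$, which is stronger than needed since $\sqrt\mu$ is bounded in the regime considered. For the first piece, set $\psi:=\mathcal L^{-1}g$ (well defined, $\mathcal L$ elliptic): a direct computation using $\mu\hbi\nablag\cdot(h_b\nablag\psi)=\psi-\hbi g$ shows $\A(\nablag\psi)=\nablag(\hbi g)$, hence $h_b\A(\nablag\psi)=h_b\nablag(\hbi g)$ and, by uniqueness, $(h_b\A)^{-1}(h_b\nablag(\hbi g))=\nablag\psi$. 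The $\mu$-weighted energy estimate for $\mathcal L\psi=g$, now with forcing in $H^N$, yields $|\psi|_{H^N}+\sqrt\mu|\nablag\psi|_{H^N}\le C(\hbi,|h_b|_{H^{N+1}})|g|_{H^N}$; therefore $\sqrt\mu|\nablag\psi|_{H^N}\lesssim|g|_{H^N}$, and from $\mu\nablag\cdot\nablag\psi=\psi-\hbi g-\mu\hbi\nablag h_b\cdot\nablag\psi$ one gets $\mu|\nablag\cdot\nablag\psi|_{H^N}\lesssim|g|_{H^N}$, so that $\sqrt\mu|\nablag\psi|_{X^N}\lesssim|g|_{H^N}$. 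Adding the two pieces proves (2).

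The main obstacle is exactly the gain of the extra $\sqrt\mu$ in part (2): the operator $h_b\A$ is \emph{not} elliptic — its inverse only controls the divergence, not a full gradient — so this gain is impossible for a generic right-hand side and can only be extracted from the observation that on gradients $(h_b\A)^{-1}$ reduces to the inverse of the truly elliptic scalar operator $\mathcal L$, whose $\mu$-weighted coercivity produces the required power. The remaining work is a careful, but routine, bookkeeping of the $\mu$-weights through the commutator terms so that all constants stay independent of $\mu$.
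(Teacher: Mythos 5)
Your proof is correct, and after the common first step it takes a genuinely different route from the paper's. Both start identically: the variational identity $(h_b\A V,W)_2=(h_bV,W)_2+\mu(\hbi\nablag\cdot(h_bV),\nablag\cdot(h_bW))_2$ and Lax--Milgram on $X^0$ give invertibility. From there the paper stays at the vector level: it differentiates $h_b\A V_f=f$ up to $N$ times, pairs with $\partial^N V_f$, uses the $X^0$-coercivity of $h_b\A$ and an induction on $N$ to handle the commutators $[h_b\A,\partial^N]V_f$, and obtains estimate (2) from the very same scheme via the duality observation $(\sqrt{\mu}\nablag g,V)_2=-(g,\sqrt{\mu}\nablag\cdot V)_2$. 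You instead reduce to the scalar operator $\mathcal{L}=h_b-\mu\nablag\cdot(h_b\nablag\cdot)$ through $w=\hbi\nablag\cdot(h_bV)$ and $V=\hbi f+\mu\nablag w$, and for (2) you combine the splitting $\nablag g=h_b\nablag(\hbi g)+g\,\hbi\nablag h_b$ with the identity $(h_b\A)^{-1}\bigl(h_b\nablag(\hbi g)\bigr)=\nablag\mathcal{L}^{-1}g$; the algebra behind both reductions checks out. The paper's argument is more compact and treats (1) and (2) uniformly without invoking elliptic regularity for an auxiliary operator; yours isolates the mechanism behind the extra $\sqrt{\mu}$ in (2) (gradient data only see the genuinely elliptic scalar operator $\mathcal{L}$) and makes explicit that $(h_b\A)^{-1}(h_b\nablag f)$ is itself a gradient, which is exactly the structural fact exploited later in the proof of Lemma \ref{c3:reguATB}. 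Two minor points: your constants are uniform only for $\mu$ in a bounded range (say $\mu\leq\mu_{\max}$), which is consistent with the paper's regime and with its own proof, whose coercivity constant likewise absorbs a $\mu\vert h_b\vert_{W^{1,\infty}}$ contribution; and the product and commutator bounds you invoke (Kato--Ponce, multiplication by $\hbi$ and by $g\,\hbi\nablag h_b$) need the same tame/Sobolev-embedding bookkeeping with $h_b\in H^{N+1}$ as the paper's commutator terms $R_1^N$, $R_2^N$, so it is worth stating once how the constant $C(\hbi,\vert h_b\vert_{H^{N+1}})$ arises there.
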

\begin{proof}
Let $N\in\mathbb{N}$. We define $$X^N = \lbrace V\in L^2(\R^d),\qquad \nablag\cdot V\in L^2(\R^d)\rbrace.$$ Endowed with the scalar product $$(\cdot,\cdot)_{X^N} = (\cdot,\cdot)_{H^N}+\mu(\nablag\cdot,\nablag\cdot)_{H^N},$$ $X^N$ is an Hilbert space with norm $$\vert \cdot\vert_{X^N}^2 = \vert\cdot\vert_{H^N}^2+\mu\vert\nablag\cdot\vert_{H^N}^2.$$ We start to prove that $h_b\A$ is invertible, by using a Lax Milgram's Theorem with the bilinear form $$T:(V_1,V_2)\in X^0\times X^0 \longmapsto (h_b\A V_1, V_2)_2.$$ \begin{enumerate}[label = \roman*)]
\item The bilinear form $T$  is continuous: 
\end{enumerate} Indeed, one has, for all $V_1,V_2\in X^N$ : 
\begin{align*}
(h_b \A V_1,V_2)_2 = (h_b V_1,V_2)_2+\mu(\hbi\nablag\cdot(h_b V_1),\nablag\cdot(h_b V_2))_2
\end{align*}
and therefore one has 
\begin{equation*}
(h_b \A V_1,V_2)_2 \leq C_1(h_{\min},\vert h_b\vert_\Wuf)\vert V_1\vert_{X^0}\vert V_2\vert_{X^0}
\end{equation*}
with $C_1$ a non decreasing function of its arguments. 
\begin{enumerate}[label = \roman*),resume]
\item The bilinear form $T$ is coercive:
\end{enumerate}
Let us write for all $V\in X^0$: \begin{align*}
(h_b\A V,V)_2 = (h_b V,V)_2+\mu(\hbi \nablag\cdot(h_b V),\nablag\cdot(h_b V))_2
\end{align*} which already gives 
\begin{equation}
\vert V\vert_2^2 \leq \frac{1}{h_{\min}}(h_b V,V)_2 .\label{c3:coercive1}
\end{equation}
Moreover, one has
\begin{align*}
(h_b V,V)_2 + (\hbi \nablag\cdot V,\nablag\cdot V)_2 &= (h_b\A V,V)_2-2\mu(\nablag\cdot V,V\cdot \nablag h_b)_2 \\&- \mu(\hbi \nablag (h_b)\cdot V, \nablag (h_b)\cdot V)_2 \\
&\leq (h_b\A V,V)_2+2\mu\vert\nablag\cdot V\vert_2\vert V\vert_2\vert h_b\vert_\Wuf+\frac{1}{h_{\min}}\mu\vert V\vert_2^2\vert h_b\vert_\Wuf 
\end{align*}
and one can conclude using \eqref{c3:coercive1} and Young's inequality that \begin{equation}\vert V\vert_{X^0}^2 \leq C_2(\frac{1}{h_{\min}},\vert h_b\vert_\Wuf)(h_b \A V,V)_2.\label{c3:V_f}  \end{equation}
Using Lax Milgram's Theorem, for all $f\in L^2(\R^d)$, there exists a unique $V_f\in X^0$ be such that $h_b\A V_f = f$. We now prove the first estimate on $V_f$ stated by the Proposition by induction on $N$. Taking $V=V_f$ in \eqref{c3:V_f}, one has this estimate for $N=0$. Let us suppose that the result is true for $N-1$ with $N\geq 1$, and let us prove it for $N$. One has, differentiating $N$ times the relation $h_b \A V_f =f$  (we denote by $\partial^N$ any derivative of order $N$ below):
\begin{align}
h_b \A \partial^N V_f = \partial^N f + [h_b\A, \partial^N] V_f\label{c3:derive_Vf}
\end{align}
and $[h_b\A, \partial^N]V_f$ is sum of terms of the form $$R_1^N =  \mu\partial^{k_1}(h_b) \nablag(\partial^{k_2}(\hbi)\nablag\cdot(\partial^{k_3} (h_b)\partial^{k_4} V_f))$$ and $$R_2^N = \partial^{l_1} (h_b)\partial^{l_2} V_f$$ with $k_1+k_2+k_3+k_4=N$ and $k_4<N$, and with $l_1+l_2=N$ and $l_2<N$. Taking the $L^2$ scalar product of \eqref{c3:derive_Vf} with $\partial^N V_f$, and noticing that \begin{align*}
(R_1^N,\partial^N V_f) &=\mu (\partial^{k_2}(\hbi) \nablag\cdot(\partial^{k_3} (h_b) \partial^{k_4} V_f),\nablag(\cdot \partial^{k_1}(h_b)\partial^N V_f))_2,\end{align*} one gets:
$$(h_b\A\partial^N V_f,\partial^N V_f)_2\leq  C(\hbi,\vert h_b\vert_{H^{N+1}})(\vert\partial^N V_f\vert_2\vert \partial^{l_2} V_f\vert_2+\mu\vert \nablag\cdot \partial^N V_f\vert(\vert\nablag\cdot \partial^{k_4} V_f\vert_2+\vert \partial^{k_4} V_f\vert_2)).$$

Using the induction hypothesis, the terms $\nablag\cdot\partial^{k_4}V_f$ and $\partial^{l_2} V_f$ are already controlled by $$C(\hbi,\vert h_b\vert_{H^{N+1}})\vert f\vert_{H^N},$$ since $k_4<N$ and $l_2<N$. One finally gets with a Young's inequality that \begin{equation}
(h_b \A \partial^N V_f,\partial^N V_f )_2 \leq C(\frac{1}{h_{\min}},\vert h_b\vert_{H^{N+1}})\vert f\vert_{H^N}\label{c3:reguVfN}
\end{equation}
and combining \eqref{c3:reguVfN} with \eqref{c3:V_f}, one gets the estimate of the Theorem by a duality argument. \par\vspace{\baselineskip}

To prove the second point of the Proposition, one  has to notice that for all $f=\sqrt{\mu}\nablag g$ with $g\in H^1(\R^d)$ and all $V\in X^0$ :
$$(f,V)_2 = -(g,\sqrt{\mu}\nablag\cdot(V))_2$$ and one can adapt all the proof of the first point to get the desired result, since $\sqrt{\mu}\nablag\cdot(V)\in L^2(\R^d)$. \qquad $\Box$

\end{proof}

\end{appendix}

The author has been partially funded by the ANR project Dyficolti ANR-13-BS01-0003-01.

\bibliographystyle{plain}

\bibliography{boussinesq_modified}

\begin{thebibliography}{10}

\bibitem{alazard2}
Thomas Alazard.
\newblock Incompressible limit of the nonisentropic {E}uler equations with the
  solid wall boundary conditions.
\newblock {\em Adv. Differential Equations}, 10(1):19--44, 2005.

\bibitem{alvarez}
Borys Alvarez-Samaniego and David Lannes.
\newblock Large time existence for 3{D} water-waves and asymptotics.
\newblock {\em Invent. Math.}, 171(3):485--541, 2008.

\bibitem{alvarez2007nash}
Borys Alvarez-Samaniego and David Lannes.
\newblock A {N}ash-{M}oser theorem for singular evolution equations.
  {A}pplication to the {S}erre and {G}reen-{N}aghdi equations.
\newblock {\em Indiana Univ. Math. J.}, 57(1):97--131, 2008.

\bibitem{saintvenant1}
Adhémar-Jean-Claude Barré~de Saint-Venant.
\newblock Théorie du mouvement non permanent des eaux, avec application aux
  crues des rivières et a l'introduction des marées dans leur lit.
\newblock In {\em C.R. Acad. Sc. Paris}, 2015.

\bibitem{benzoniserre}
Sylvie Benzoni-Gavage and Denis Serre.
\newblock {\em Multidimensional hyperbolic partial differential equations}.
\newblock Oxford Mathematical Monographs. The Clarendon Press, Oxford
  University Press, Oxford, 2007.
\newblock First-order systems and applications.

\bibitem{bonachen}
Jerry~L Bona, Min Chen, and Jean-Claude Saut.
\newblock Boussinesq equations and other systems for small-amplitude long waves
  in nonlinear dispersive media. i: Derivation and linear theory.
\newblock {\em Journal of Nonlinear Science}, 12(4):283--318, 2002.

\bibitem{bonalannes}
Jerry~L. Bona, Thierry Colin, and David Lannes.
\newblock Long wave approximations for water waves.
\newblock {\em Arch. Ration. Mech. Anal.}, 178(3):373--410, 2005.

\bibitem{boussinesq2}
Joseph Boussinesq.
\newblock {Théorie générale des mouvements qui sont propagés dans un canal
  rectangulaire horizontal}.
\newblock {\em Comptes Rendus des Séances de l'Académie des Sciences.
  Paris.}, pages 256--260, 1871.

\bibitem{boussinesq1}
Joseph Boussinesq.
\newblock {Théorie des ondes et des remous qui se propagent le long d'un canal
  rectangulaire horizontal, en communiquant au liquide contenu dans ce canal
  des vitesses sensiblement pareilles de la surface au fond}.
\newblock {\em Journal de Mathématiques Pures et Appliquées. Paris.}, pages
  55--108, 1872.

\bibitem{bresch_metivier}
Didier Bresch and Guy M{\'e}tivier.
\newblock Anelastic limits for euler-type systems.
\newblock {\em Applied Mathematics Research eXpress}, 2010(2):119--141, 2010.

\bibitem{iguchi2009}
Tatsuo Iguchi.
\newblock A shallow water approximation for water waves.
\newblock {\em J. Math. Kyoto Univ.}, 49(1):13--55, 2009.

\bibitem{israwi}
Samer Israwi.
\newblock Derivation and analysis of a new 2d green--naghdi system.
\newblock {\em Nonlinearity}, 23(11):2889, 2010.

\bibitem{samer}
Samer Israwi.
\newblock Variable depth kdv equations and generalizations to more nonlinear
  regimes.
\newblock {\em ESAIM: Mathematical Modelling and Numerical Analysis},
  44(02):347--370, 2010.

\bibitem{lagrange}
Joseph~Louis Lagrange.
\newblock {\em M{\'e}moire sur la th{\'e}orie du mouvement des fluides}.
\newblock Acad{\'e}mie royale des sciences et belles-lettres de Berlin, 1781.

\bibitem{david}
David Lannes.
\newblock {\em The water waves problem}, volume 188.
\newblock Americal Mathematical Society, 2013.

\bibitem{lannesmarche}
David Lannes and Fabien Marche.
\newblock {A new class of fully nonlinear and weakly dispersive Green-Naghdi
  models for efficient 2D simulations}.
\newblock {\em {Journal of Computational Physics}}, pages 238--268, December
  2014.

\bibitem{lannes_saut}
David Lannes and Jean-Claude Saut.
\newblock {Weakly transverse Boussinesq systems and the KP approximation}.
\newblock {\em Nonlinearity}, pages 2853--2875, 2006.

\bibitem{schochetmetivier}
Guy M{\'e}tivier and Steve Schochet.
\newblock The incompressible limit of the non-isentropic euler equations.
\newblock {\em Archive for rational mechanics and analysis}, 158(1):61--90,
  2001.

\bibitem{miles}
John Miles and Rick Salmon.
\newblock Weakly dispersive nonlinear gravity waves.
\newblock {\em Journal of Fluid Mechanics}, 157:519--531, 1985.

\bibitem{benoit}
Benoît {Mésognon-Gireau}.
\newblock {The Cauchy problem on large time for the Water Waves equations with
  large topography variations}.
\newblock {\em Annales of IHP}, forthcoming.

\bibitem{peregrine}
D.Howell Peregrine.
\newblock {Long waves on a beach}.
\newblock {\em Journal of Fluid Mechanics}, pages 815--827, 1967.

\bibitem{saut}
Jean-Claude Saut and Li~Xu.
\newblock The cauchy problem on large time for surface waves boussinesq
  systems.
\newblock {\em Journal de Math\'{e}matiques Pures et Appliqu\'{e}es}, September
  2011.

\bibitem{sautxu}
Jean-Claude Saut and Li~Xu.
\newblock Well-posedness on large time for a modified full dispersion system of
  surface waves.
\newblock {\em Journal of Mathematical Physics}, 2015.

\bibitem{schochet}
Steve Schochet.
\newblock The compressible euler equations in a bounded domain: Existence of
  solutions and the incompressible limit.
\newblock {\em Communications in Mathematical Physics}, 104(1):49--75, 1986.

\bibitem{seabra}
Fernando~J Seabra-Santos, Dominique~P Renouard, and Andr{\'e}~M Temperville.
\newblock Numerical and experimental study of the transformation of a solitary
  wave over a shelf or isolated obstacle.
\newblock {\em Journal of Fluid Mechanics}, 176:117--134, 1987.

\bibitem{serre}
Fran{\c{c}}ois Serre.
\newblock Contribution {\`a} l'{\'e}tude des {\'e}coulements permanents et
  variables dans les canaux.
\newblock {\em La Houille Blanche}, (6):830--872, 1953.

\bibitem{sugardner}
CH~Su and CS~Gardner.
\newblock Korteweg-de vries equation and generalizations. iii. derivation of
  the korteweg-de vries equation and burgers equation.
\newblock {\em Journal of Mathematical Physics}, 10(3):536--539, 1969.

\bibitem{taylor3}
Michael~E Taylor.
\newblock {\em Partial differential equations. III, volume 117 of Applied
  Mathematical Sciences}.
\newblock Springer-Verlag, New York, 1997.

\bibitem{wei}
Ge~Wei, James~T Kirby, Stephan~T Grilli, and Ravishankar Subramanya.
\newblock A fully nonlinear boussinesq model for surface waves. part 1. highly
  nonlinear unsteady waves.
\newblock {\em Journal of Fluid Mechanics}, 294:71--92, 1995.

\end{thebibliography}

\end{document}